\numberwithin{equation}{section}
\theoremstyle{plain}
\newtheorem{Thm}[equation]{Theorem}
\newtheorem{Prop}[equation]{Proposition}
\newtheorem{Lem}[equation]{Lemma}
\theoremstyle{definition}
\newtheorem{Def}[equation]{Definition}
\theoremstyle{remark}
\newtheorem{Rmk}[equation]{Remark}
\newtheorem{Exa}[equation]{Example}
\begin{document}

\title[Group schemes and local densities of quadratic lattices when $p=2$]
{Group schemes and local densities of quadratic lattices in residue characteristic 2}

\author{Sungmun Cho}
\email{sungmuncho12@gmail.com}
\address{Department of Mathematics\\Purdue University\\West Lafayette\\IN\\USA\\47907}
\curraddr{Department of
Mathematics\\University of Toronto\\Toronto\\ON\\CANADA\\M5S 2E4}

\dedication{Dedicated to my parents}

\classification{11E41, 11E95, 14L15,  20G25 (primary), 11E12,  11E57 (secondary)}
\keywords{local densities, mass formula, group schemes, smooth integral models}


\begin{abstract}
The celebrated Smith-Minkowski-Siegel mass formula expresses the  mass of a quadratic lattice $(L, Q)$ as a product of local factors, called the local densities of $(L,Q)$. This mass formula is an essential tool for the classification of integral quadratic lattices.
In this paper, we will describe the local density formula  explicitly by observing the existence of a smooth affine group scheme $\underline{G}$ over $\mathbb{Z}_2$ with generic fiber $\mathrm{Aut}_{\mathbb{Q}_2} (L,Q)$,
 which satisfies $\underline{G}(\mathbb{Z}_2)=\mathrm{Aut}_{\mathbb{Z}_2} (L,Q)$.
Our method works for any unramified finite extension of $\mathbb{Q}_2$.
Therefore, we give a long awaited proof for the local density formula of  Conway  and  Sloane
and discover its generalization to unramified finite extensions of $\mathbb{Q}_2$.
As an example, we give the mass formula for the integral quadratic form $Q_n(x_1, \cdots, x_n)=x_1^2 + \cdots + x_n^2$ associated to a number field $k$ which is totally real and such that the ideal $(2)$ is unramified over $k$.
\end{abstract}

\maketitle


\section{Introduction}

\subsection{Introduction}
The problem of local densities has intrigued many great mathematicians, including Gauss and Eisenstein,
 Smith and Minkowski,  and Siegel.
 If $(L, Q)$ is a quadratic $R$-lattice, where $R$ is the ring of integers of a number field,
then the celebrated Smith-Minkowski-Siegel mass formula expresses the  mass of a quadratic lattice $(L, Q)$ as a product of local factors,
called the local densities of $(L, Q)$.
The local density is defined as the limit of a certain sequence.
G. Pall \cite{P} (for $p\neq 2$) and G. L. Watson \cite{Wa} (for $p=2$) computed this limit for an arbitrary lattice over $\mathbb{Z}_p$, thereby deriving an explicit formula for its local density.
For an expository sketch of their approach, see \cite{K}.
There is another proof of Y. Hironaka and F. Sato \cite{SH} computing the local density when $p\neq 2$.
They treat an arbitrary pair of lattices, not just a single lattice, over $\mathbb{Z}_p$ (for $p\neq 2$).
 J. H. Conway  and J. A. Sloane \cite{CS} further developed the formula for any $p$ and gave a heuristic explanation for it.
They mentioned in \cite{CS} regarding the local density formula for $p=2$  that ``Watson's formula seems to us to be essentially correct"
and  in the footnote that
``In fact we could not quite reconcile Watson's version with ours; they appear to differ by a factor of $2^n$ for $n$-dimensional forms. This is almost certainly due to our misunderstanding of Watson's conventions, which differ considerably from ours."
 In addition, they mentioned in the same paper that ``The reader may be confident that our version of the general mass formula is correct. $\cdots$ has enabled us to test the formula very stringently."
 Their formula   was computationally tested stringently and
 the proof of  Conway-Sloane's  local density formula, when $p=2$, has not been published in literature.

On the other hand, there is a simpler formulation of the local density as the integral of a certain volume form $\omega^{\mathrm{ld}}$
 over some open compact subgroup of an orthogonal group,
due to Kneser, Tamagawa, and Weil.
As explained in  the introduction of \cite{GY},  known methods unfortunately do not explain this formulation and involve complicated recursions.

Meanwhile, the work \cite{GY} by W. T. Gan and J.-K. Yu   is based on the existence of a smooth affine group scheme $\underline{G}$ over $\mathbb{Z}_p$ with generic fiber $\mathrm{Aut}_{\mathbb{Q}_p}(L, Q)$, which satisfies $\underline{G}(\mathbb{Z}_p)=\mathrm{Aut}_{\mathbb{Z}_p}(L, Q)$.
By constructing $\underline{G}$ explicitly and determining its special fiber,
they computed the integral and therefore obtained the formula for the local density when $p\neq 2$.

The main contribution of this paper is to construct $\underline{G}$ and investigate its special fiber
in order to get an explicit formula for the local density when $L$ is a quadratic $A$-lattice, where $A$ is an unramified finite extension of $\mathbb{Z}_2$.
Therefore, we give a long awaited proof for the local density formula of Conway and Sloane.
Furthermore, we discover its generalization to unramified finite extensions of $\mathbb{Q}_2$.
The special fiber of $\underline{G}$ has a large component group of the form $(\mathbb{Z}/2\mathbb{Z})^{N}$.
That is, we discover a large number of independent homomorphisms from the special fiber of $\underline{G}$ to the constant group $\mathbb{Z}/2\mathbb{Z}$.
Consequently, when replacing $\mathbb{Z}_2$ by an unramified finite extension with residue field $F_q$, where $q$ is a power of $2$, the $2$-power factor $2^M$ in the formula of Conway and Sloane  has to be replaced by $2^N \cdot q^{M-N}$. This fact is far from obvious from Conway-Sloane's explanation.

In  conclusion, this paper, combined with \cite{GY},
allows the computation of the mass formula for a quadratic $R$-lattice $(L, Q)$ when the ideal $(2)$ is unramified over $R$.

This paper is organized as follows.
 We first state the structural theorem for integral quadratic forms in Section 2. We then give an explicit construction of   $\underline{G}$
 (in Section 3) and its special fiber (in Section 4). Finally,
 by comparing $\omega^{\mathrm{ld}}$ and the canonical volume form $\omega^{\mathrm{can}}$ of $\underline{G}$,
 we obtain an explicit formula for the local density in Section 5.
  In Section 6, as an example, we give the mass formula for the integral quadratic form $Q_n(x_1, \cdots, x_n)=x_1^2 + \cdots + x_n^2$ associated to a number field $k$ which is totally real and such that the ideal $(2)$ is unramified over $k$.
 This formula is explicitly described using  the Dedekind zeta function of $k$ and a certain Hecke $L$-series.

 As in \cite{GY}, the smooth group schemes constructed in this paper should be of independent interest.

\subsection{Acknowledgements}
The author is deeply indebted to his Ph.D. thesis advisor Professor Jiu-Kang Yu for suggesting this problem and for giving many valuable ideas.
The author would like to thank Professor Benedict H. Gross for his interest in this work and his encouragement.
The author thanks the referees for their helpful comments, especially for pointing out a mistake and for suggesting a certain reference.
The author would like to thank Radhika Ganapathy, Hui Gao, Bogume Jang, Yean Su Kim, Elena Lawrick, Manish Mishra, Olivier  Taibi and Sandeep Varma
 for carefully reading a draft of this paper to help reduce the typographical errors and improve the presentation of this paper.

\section{Structural theorem for quadratic lattices and notations}

\subsection{Notations}

Notations and definitions in this subsection are taken from \cite{O1} and \cite{O2}.
Let $F$ be an unramified finite extension of $\mathbb{Q}_2$ with $A$ its ring of integers and $\kappa$ its residue field.

 We consider an $A$-lattice $L$ with a quadratic form $q:L \rightarrow A$.
We denote by a pair $(L, q)$ a quadratic lattice.
Let $\langle -,-\rangle_q$ be the symmetric bilinear form on $L$ such that
$$\langle x,y\rangle_q=\frac{1}{2}(q(x+y)-q(x)-q(y)).$$
We assume that $\langle x,y\rangle_q \in A$ and $V=L\otimes_A F$ is nondegenerate with respect to $\langle -,-\rangle_q$.

For any $\epsilon \in A$, we denote by $(\epsilon)$ the  $A$-lattice of rank 1 equipped with the symmetric bilinear form having Gram matrix $(\epsilon)$.
We use the symbol $ \epsilon\cdot A(\alpha, \beta)$ to denote the $A$-lattice $A\cdot e_1+A\cdot e_2$
with the symmetric bilinear form having Gram matrix $\epsilon\cdot \begin{pmatrix} \alpha&1\\ 1&\beta \end{pmatrix}$.

A quadratic lattice $L$ is the \textit{orthogonal sum} of sublattices $L_1$ and $L_2$, written $L=L_1\oplus L_2$, if $L_1\cap L_2=0$, $L_1$ is orthogonal to $L_2$ with respect to the symmetric bilinear form $\langle-,- \rangle_q$, and $L_1$ and $L_2$ together span $L$.

The fractional ideal generated by $q(X)$ as $X$ runs through $L$ will be called the \textit{norm} of $L$ and written $N(L)$.

By the \textit{scale} $S(L)$ of $L$, we mean the fractional ideal generated by the subset $\langle L,L \rangle_q$ of $F$.

The \textit{discriminant} of $L$, denoted by $d(L)$, is defined as the determinant of a Gram matrix defining the symmetric bilinear form $\langle -,- \rangle_q$,
up to multiplication by the square of a unit.

 We say $(L, q)$ is \textit{unimodular} if the discriminant $d(L)$ is a unit
 and a Gram matrix defining $\langle -,- \rangle_q$ has integral entries.

\begin{Def}
For a given quadratic lattice $L$,
\begin{enumerate}
\item[a)] A unimodular lattice $L$ is \textit{of parity type I} if $N(L)=A$; otherwise \textit{of parity type II}.
\item[b)] $(L, q)$ is \textit{modular} if $(L, a^{-1}q)$ is unimodular for some $a\in A \backslash \{0\}$,
where $a$ is unique up to a unit,
and in this case the parity type of $(L, q)$ is defined to be the parity type of $(L, a^{-1}q)$.
\item[c)] The zero lattice is considered to be \textit{modular of parity type II}.
\end{enumerate}
\end{Def}

\begin{Def}
We define the dual lattice of $L$, denoted by $L^{\perp}$, as
 $$L^{\perp}=\{x \in L\otimes_A F : \langle x, L\rangle_q \subset A \}.$$
\end{Def}

\begin{Rmk}
\begin{enumerate}
\item[a)] The scale $S(L)$ of a unimodular lattice $L$ is always $A$.
Based on the definition of a unimodular lattice,  the discriminant $d(L)$ is a unit and a Gram matrix defining $\langle -,- \rangle_q$ has integral entries.
Thus there is at least one unit entry of a Gram matrix  and this implies our claim.
\item[b)](93:15 in \cite{O2}) It is well known that a lattice of parity type I is diagonalizable.  In other words, $$L=\bigoplus_i (u_i),$$ where $u_i$'s are units.
\item[c)] If a unimodular lattice $L$ is of parity type II, then $$L=\bigoplus_i A(a_i, b_i),$$ where $a_i$ and $b_i$ are elements of the prime ideal $ (2)$ of $A$.
Thus the rank of $L$ is even.
\end{enumerate}
\end{Rmk}

\subsection{Structural theorem for quadratic lattices}

We  state the structural theorem for unimodular lattices as follows. Indeed, this theorem is a summary of some results from Section 93 of \cite{O2}.

\begin{Thm}
Assume that $L$ is \textit{unimodular}.
If $L$ is \textit{of parity type I}, then there is an orthogonal decomposition
$$L\cong \bigoplus_i A_i(0,0) \oplus K \oplus K^{\prime}.$$
Here, $A_i(0,0)=A(0,0)$, $K$ is empty or $A(2,\lambda)$ with $\lambda \in (2)$, and $K^{\prime}$ is $(\epsilon)$ or $A(1, 2\gamma)$
where $\epsilon\equiv 1$ mod $(2)$ and $\gamma\in A$.

     If $L$ is \textit{of parity type II}, then we have an orthogonal decomposition
     $$L\cong \bigoplus_i A_i(0,0) \oplus A(a,b).$$
     Here $A_i(0,0)=A(0,0)$ and  $a,b \in (2)$.
\end{Thm}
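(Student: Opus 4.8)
The plan is to follow O'Meara's theory of Jordan splittings for $2$-adic lattices, taking as the starting point the crude decompositions already recorded in the Remark: a \emph{unimodular} lattice of parity type I is diagonalizable, $L=\bigoplus_i (u_i)$ with $u_i\in A^\times$, while one of parity type II is an orthogonal sum of binary even blocks, $L=\bigoplus_i A(a_i,b_i)$ with $a_i,b_i\in(2)$. The whole content of the theorem is then the \emph{normalization} of these two crude forms into the stated canonical shapes, carried out by a finite list of rank-preserving isometries (``reduction moves'') among lattices of rank $\le 4$, while tracking the two basic invariants: the discriminant $d(L)\in A^\times/(A^\times)^2$ and the parity/oddity data.

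For parity type II I would induct on the number of nontrivial blocks. First I would show that each binary even block $A(a,b)$ with $a,b\in(2)$ is isometric either to the hyperbolic plane $A(0,0)$ or to a fixed anisotropic even plane of shape $A(2,2)$; this is a binary computation controlled by $d(L)$ and by whether the associated quadratic space over $F$ is isotropic. Then I would invoke the cancellation identity $A(2,2)\oplus A(2,2)\cong A(0,0)\oplus A(0,0)$ to annihilate the anisotropic blocks two at a time, leaving at most one. Absorbing all the isotropic blocks as $\bigoplus_i A_i(0,0)$ yields $L=\bigoplus_i A_i(0,0)\oplus A(a,b)$ with $a,b\in(2)$, as required.

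For parity type I I would peel off hyperbolic planes by a splitting-off principle. Since $\dim_F(L\otimes_A F)\ge 5$ forces isotropy over the $2$-adic field $F$, one can choose a primitive isotropic $x$; then, using that the reduction of $q$ modulo $(2)$ is an additive, Frobenius-semilinear map $L/2L\to\kappa$ whose zero locus is the even sublattice, I would adjust a dual vector $y$ with $\langle x,y\rangle_q=1$ inside $x^{\perp}$ (of rank $\ge 3$, hence with universal reduction) so that $q(y)\in(2)$, and then $q(y)=0$ after a further shift along $x$. This exhibits an $A(0,0)$ summand, and because $A(0,0)$ is even the complement remains of type I and unimodular, so the induction continues until the rank drops to $\le 4$. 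A direct classification of the remaining small type I lattices then produces the even part $\bigoplus_i A_i(0,0)\oplus K$ (with $K$ empty or $A(2,\lambda)$) together with the odd tail $K'$, whose rank $1$ or $2$ is forced by the parity of $\mathrm{rank}\,L$, landing it in the shape $(\epsilon)$ or $A(1,2\gamma)$.

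The main obstacle is precisely this type I base-case normalization over a general unramified extension $A$ rather than $\mathbb{Z}_2$: the group $A^\times/(A^\times)^2$ is larger and its structure is governed by Artin--Schreier theory in the residue field $\kappa=\mathbb{F}_{2^f}$, so the catalogue of normal forms and the reduction identities among rank-$\le 4$ forms must be re-derived in this setting. One must verify that the discriminant together with the oddity invariant exactly decides whether the auxiliary even block $K$ is present and which of the two shapes $K'$ takes, and that the chosen finite set of moves is both sound and exhaustive, leaving no further invariant beyond those encoded by $K$ and $K'$. By contrast, the type II case is comparatively routine once the single cancellation identity $A(2,2)\oplus A(2,2)\cong A(0,0)\oplus A(0,0)$ is in hand.
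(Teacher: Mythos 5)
Your overall strategy---reduce to rank $\le 4$ by splitting off hyperbolic planes, then normalize the small cases---is the same skeleton the paper uses, and your type II argument (each even binary block is split or anisotropic, plus the cancellation ``anisotropic $\oplus$ anisotropic $\cong$ split $\oplus$ split'') is sound in substance. But the proposal has a genuine gap exactly where you yourself locate ``the main obstacle'': the rank $\le 4$ normalization in parity type I is never carried out. You say the catalogue of normal forms and reduction identities ``must be re-derived'' and that ``one must verify'' that discriminant and oddity decide the shapes of $K$ and $K'$ --- but this verification \emph{is} the theorem. Everything before it (diagonalizability, peeling off $A(0,0)$'s) is comparatively soft; the assertion that a type I unimodular lattice of rank $\le 4$ is $\bigoplus A(0,0)\oplus K\oplus K'$ with $K\in\{\emptyset, A(2,\lambda)\}$ and $K'\in\{(\epsilon), A(1,2\gamma)\}$ is the hard content, and the proposal contains no argument for it. The paper closes this gap by quoting O'Meara's explicit classification (Example 93:10 and Section 93:18 of \cite{O2}) of dyadic lattices of rank $2$, $3$, $4$ in terms of a norm generator $\textbf{a}$ and weight generator $\textbf{b}$, and then simply computing that for a unimodular type I lattice one may take $\textbf{a}=1$, $\textbf{b}=2$ (so $\mathrm{ord}_2\textbf{a}+\mathrm{ord}_2\textbf{b}$ is odd), after which the listed normal forms specialize to exactly $K$ and $K'$ as stated; the reduction $(u)\cong(\epsilon)$ uses that $\kappa$ is perfect of characteristic $2$. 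A complete write-up along your lines would have to reprove those facts of O'Meara, not merely announce that they need proving.

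A smaller but concrete error: over a general unramified $A$ the anisotropic even plane is \emph{not} of shape $A(2,2)$. The Arf invariant of $\tfrac12 q \bmod 2$ for $A(2,2\varrho)$ is $\bar\varrho$ in $\kappa/\wp(\kappa)$, so $A(2,2)$ is anisotropic only when $\mathrm{Tr}_{\kappa/\mathbb{F}_2}(1)\neq 0$, i.e.\ when $[\kappa:\mathbb{F}_2]$ is odd; for even-degree residue fields $A(2,2)$ is already hyperbolic and the anisotropic plane is $A(2,2\varrho)$ with $\mathrm{Tr}(\bar\varrho)\neq 0$. This does not affect the truth of the type II conclusion as stated (which only requires $a,b\in(2)$), but it is precisely the kind of residue-field-dependent phenomenon that your deferred type I catalogue would also have to handle, and it shows the ``fixed'' normal form you propose to cancel against is not uniform in $A$.
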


\begin{proof}
We use notations and a terminology from \cite{O2}.
It is easily seen that the set $q(L)+2(S(L))$ is an additive subgroup of $F$.
We let $M(L)$ denote the largest fractional ideal contained in the  group $q(L)+2(S(L))$.
Then we define the \textit{weight}  by
$2(M(L))+2(S(L)).$
We call the scalar $\textbf{a}$ a \textit{norm generator} of $L$ if $\textbf{a}\in q(L)+2(S(L))$ and $\textbf{a}A=N(L)$.
We call the scalar $\textbf{b}$ a \textit{weight generator} of $L$ if $\textbf{b}A=2(M(L))+2(S(L))$.
Let $d$ be the discriminant of $L$.
Regard $d$ as an element of the unit group $\textbf{u}$ of $A$.

We first state the following seven facts proved in  Example 93:10 and Section 93:18 of \cite{O2}.
\begin{itemize}
\item[i)] For every unit $u$ in the unit group $\textbf{u}$, there is a solution $v$ in $\textbf{u}$ such that $v^2\equiv u$ (mod $2$) because the residue field $\kappa$ is perfect and of characteristic 2.
Thus an $A$-lattice $(u)$ of rank 1 is isometric to an $A$-lattice $(\epsilon)$ of rank 1, where $\epsilon\equiv 1$ mod $(2)$.
\item[ii)] If dim $L \geqq 5$, then $$L \cong A(0,0)\oplus \cdots . $$
\item[iii)] We assume that dim $L = 4$ with $\mathrm{ord}_2\textbf{a}+\mathrm{ord}_2\textbf{b}$ odd.
Here, $\mathrm{ord}_2\textbf{a}$ (resp. $\mathrm{ord}_2\textbf{b}$) is the exponential order of $\textbf{a}$ (resp. $\textbf{b}$)
at the prime ideal $(2)$ in $A$.
We suppose that $d$ has been expressed in the form $d=1+\alpha$ with $\alpha \in \textbf{ab}A$.
For a given $\varrho \in A$ such that $4\varrho \in \textbf{ab}A$, consider the lattices
\[
  \begin{array}{lcr}
  J= A(\textbf{b},0) \oplus A(\textbf{a},-\alpha \textbf{a}^{-1}),\\
 J^{\prime}_{\varrho} = A(\textbf{b},4\varrho \textbf{b}^{-1}) \oplus A(\textbf{a}, -(\alpha-4\varrho)\textbf{a}^{-1}). 
  \end{array} .\]
Then $L$ is isomorphic to $J$ or  $J^{\prime}_{\varrho}$.
\item[iv)] If dim $L = 2$ with $\mathrm{ord}_2\textbf{a}+\mathrm{ord}_2\textbf{b}$ odd, we have the following:
$$L \cong A(\textbf{a}, \textbf{b} \varrho ) \textit{ for some $\varrho \in A$}.$$
\item[v)] Let dim $L = 3$ with $\mathrm{ord}_2\textbf{a}+\mathrm{ord}_2\textbf{b}$ odd.
Then we have
$$ L \cong A(\textbf{b},0)\oplus (-d)  $$
or
$$L\cong A(\textbf{b},4\varrho \textbf{b}^{-1})\oplus (-d(1-4\varrho)) \textit{ for some $\varrho \in A$ such that $4\varrho \in \textbf{b}A$}.$$
\item[vi)]  If dim $L = 2$ with $\mathrm{ord}_2\textbf{a}+\mathrm{ord}_2\textbf{b}$ even, we have
$$L \cong A(0,0 ) \mathrm{~or~} L \cong A(2,2\varrho) \textit{ for some $\varrho \in A$}. $$
\item[vii)] If dim $L \geqq 3$ and $\mathrm{ord}_2\textbf{a}+\mathrm{ord}_2\textbf{b}$ even, then
$$L\cong A(0,0)\oplus \cdots .\\$$
\end{itemize}

From ii), we may and do assume that the rank of $L$ is at most 4.

Assume that $L$ is of parity type I so that $N(L)=A$. Thus we may assume that $\textbf{a}=1$ and $\mathrm{ord}_2\textbf{a}=0$.
Since $2(S(L))\subseteqq 2(M(L))+2(S(L)) \subseteqq (2)$ and $2(S(L))=(2)$, we have the following equality:
$$(\textbf{b})=2(M(L))+2(S(L))=(2).$$
 Hence $\mathrm{ord}_2\textbf{b}=1$ and  $\mathrm{ord}_2\textbf{a}+\mathrm{ord}_2\textbf{b}$ is odd.
 We choose $\textbf{b}=2$.
If the rank of $L$ is odd (resp. even), the theorem follows from i) and v) (resp. from iii) and iv)).

 If $L$ is of parity type II so that $N(L)=(2)$,
  $\mathrm{ord}_2 \textbf{a}=\mathrm{ord}_2 \textbf{b}=1$ and $\mathrm{ord}_2\textbf{a}+\mathrm{ord}_2\textbf{b}$ is even.
 Choose $\textbf{a}=\textbf{b}=2$.
Then the theorem follows from vi) and vii).
\end{proof}

For a general lattice $L$, we have a Jordan splitting, namely $L=\bigoplus_i L_i$ such that $L_i$ is \textit{modular} and the sequence $\{s(i)\}_i$ increases,
where $(2^{s(i)})=S(L_i)$.
Unfortunately, a Jordan splitting of $L$ is not unique.
Nevertheless, we will attach certain well-defined quantities to $L$ at the end of this section.
We recall the following theorem   from \cite{O2}.

\begin{Thm}[(\cite{O2}, Section 91:9)]
Let
$$L=L_1\oplus \cdots \oplus L_t,~~~ L=K_1\oplus \cdots \oplus K_T$$
be two Jordan splittings of $L$ with $L_i, K_j$ non-zero for all $i, j$.
 Then $t=T$.
Furthermore, for $1 \leqq i \leqq t$, the scale, rank and  parity type of $L_i$ are the same as those of $K_i$.
\end{Thm}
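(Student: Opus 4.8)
The plan is to show that every quantity named in the statement can be read off from a chain of sublattices of $L$ that is defined intrinsically, i.e.\ with no reference to any Jordan splitting; invariance is then automatic. For each integer $m$ I set
$$L^{(m)} := \{x \in L : \langle x, L\rangle_q \subseteq (2^m)\} = L \cap 2^m L^{\perp}.$$
This is an $A$-lattice depending only on $(L, \langle-,-\rangle_q)$. Feeding in a Jordan splitting $L = \bigoplus_i L_i$ with $S(L_i) = (2^{s(i)})$ and $\mathrm{rank}(L_i) = r_i$, one computes $L^{\perp} = \bigoplus_i 2^{-s(i)} L_i$ (duality respects orthogonal sums and each $L_i$ is modular), and hence the block formula
$$L^{(m)} = \bigoplus_i 2^{\max(0,\, m - s(i))} L_i.$$

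First I would recover the scales and ranks. Since $2 L^{(m)} \subseteq L^{(m+1)}$, the successive quotient $L^{(m)}/L^{(m+1)}$ is a $\kappa$-vector space, and the block formula gives $\dim_{\kappa} L^{(m)}/L^{(m+1)} = \sum_{i:\, s(i) \leq m} r_i$. As a function of $m$ this is a nondecreasing step function whose jumps occur exactly at the scales $s(i)$, the jump at $s(i)$ having height $r_i$. Because the left-hand side is intrinsic to $L$, the multiset $\{(s(i), r_i)\}_i$ is an invariant of $L$. In particular the number of components (the number of jumps) is the same for the two splittings, so $t = T$; and since a Jordan splitting is ordered by strictly increasing scale, the $i$-th scale and $i$-th rank must agree, i.e.\ $S(L_i) = S(K_i)$ and $\mathrm{rank}(L_i) = \mathrm{rank}(K_i)$.

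It remains to treat the parity type, which is the genuinely dyadic point: in residue characteristic $2$ the parity type is a property of the quadratic form $q$ (the norm), not merely of $\langle-,-\rangle_q$, so it is invisible to the bilinear filtration alone. The device I would use is the norm of the intrinsic lattice $L^{(s)}$ at a scale $s = s(i)$. Using $N(M \oplus M') = N(M) + N(M')$ for orthogonal sums, together with $N(L_j) = (2^{s(j)})$ for a type I component and $N(L_j) = (2^{s(j)+1})$ for a type II component, the block formula shows that every summand of $L^{(s)}$ contributes norm inside $(2^{s})$, while the summands of scale other than $s$ — which appear in $L^{(s)}$ rescaled upward — contribute norm inside $(2^{s+1})$. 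Consequently $N(L^{(s)}) = (2^{s})$ precisely when the unique scale-$s$ component is of parity type I, and $N(L^{(s)}) \subseteq (2^{s+1})$ when it is of parity type II. Since $N(L^{(s)})$ is intrinsic to $L$, the parity type of the scale-$s$ component is an invariant, which finishes the matching of parity types.

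The main obstacle is exactly this last step: one must verify that the lower- and higher-scale components cannot contaminate the norm of $L^{(s)}$ at the critical order $s$, which is what forces the clean dichotomy between $(2^s)$ and $(2^{s+1})$. This is where residue characteristic $2$ enters essentially, and where the estimates $N(L_j) \in \{(2^{s(j)}), (2^{s(j)+1})\}$ for modular lattices — supplied by the preceding structural analysis — must be used with care.
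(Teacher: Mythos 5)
Your argument is correct: the intrinsic filtration $L^{(m)} = L \cap 2^m L^{\perp}$ recovers the multiset of (scale, rank) pairs from the jumps of $\dim_{\kappa} L^{(m)}/L^{(m+1)}$, and the norm computation $N(L^{(s)}) = \sum_i 2^{2\max(0,\,s-s(i))} N(L_i)$ together with the bounds $2S(L_j) \subseteq N(L_j) \subseteq S(L_j)$ cleanly isolates the parity type of the scale-$s$ constituent, since all other summands contribute inside $(2^{s+1})$. The paper gives no proof of this statement, only the citation to O'Meara 91:9, and your argument is essentially a correct self-contained reconstruction of that classical invariance proof.
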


If we allow $L_i$ to be the zero lattice, then we may assume
$S(L_i)=(2^i)$ without loss of generality.
We can rephrase this theorem as follows: Let $L=\bigoplus_i L_i$ be a Jordan splitting with $s(i)=i$ for all $i\geq 0$.
Then the scale, rank and parity type of $L_i$ depend only on $L$.
We will deal exclusively with a Jordan splitting satisfying $s(i)=i$ from now on.

\subsection{Lattices}

In this subsection, we will define several lattices and corresponding notations. Assume that a quadratic lattice $(L, q)$ is given and $S(L)=(2^l)$.
The following lattices will play a significant role in our construction of the smooth integral model:

\begin{itemize}
\item[(1)] $A_i=\{x\in L \mid \langle x,L\rangle_q \in 2^iA\}$.
\item[(2)] $X(L)$, the sublattice of $L$ such that
$X(L)/2L$ is the kernel of the symmetric bilinear form $\frac{1}{2^l}\langle-,-\rangle_q$ mod 2 on $L/2L$.
\item[(3)] $B(L)$, the sublattice of $L$ such that $B(L)/2L$ is the kernel of the linear form $\frac{1}{2^l}q$ mod 2 on $L/2L$.
\end{itemize}

To define  our integral structure a few more lattices will be needed, but we need some preparation to define them.

Assume $B(L)\varsubsetneq L$. Hence the bilinear form $\frac{1}{2^l}\langle-,-\rangle_q$ mod 2 on the $\kappa$-vector space $L/X(L)$
is nonsingular symmetric and non-alternating.
It is well known (Exercise 16 in Chapter 6 of \cite{KMRT}) that there is a unique vector $e \in L/X(L)$ such that
$(\frac{1}{2^l}\langle v,e \rangle_q)^2=\frac{1}{2^l}\langle v,v \rangle_q$ mod 2 for every vector $v \in L/X(L)$.
Let $\langle e\rangle$ denote  the 1-dimensional vector space spanned by the vector $e$ and denote by $ e^{\perp}$ the 1-codimensional subspace of $L/X(L)$ which is orthogonal to the vector $e$ with respect to  $\frac{1}{2^l}\langle -,-\rangle_q$ mod 2.
Then $$B(L)/X(L)=e^{\perp}.$$
If $B(L)= L$, then the bilinear form $\frac{1}{2^l}\langle-,-\rangle_q$ mod 2 on the $\kappa$-vector space $L/X(L)$
is nonsingular symmetric and alternating. In this case, we put $e=0\in L/X(L)$
and note that it is characterized by the same identity.\\

The remaining lattices we need for our definition are:
\begin{itemize}
\item[(4)] $W(L)$, the sublattice of $L$ such that $W(L)/X(L)=\langle e\rangle$.
\item[(5)] $Y(L)$, the sublattice of $L$ such that $Y(L)/2L$ is the kernel of the symmetric bilinear form $\frac{1}{2^l}\langle-,-\rangle_q$ mod 2 on $B(L)/2L$.
\item[(6)] $Z(L)$, the sublattice of $L$ such that $Z(L)/2L$ is the kernel of the quadratic form $\frac{1}{2^{l+1}}q$ mod 2 on $B(L)/2L$.
\end{itemize}

\begin{Rmk}
\begin{itemize}
\item[a)] We can associate
the 5 lattices above ($X(L), B(L), W(L), Y(L), Z(L)$)
to $(A_i, \frac{1}{2^i}q)$.  Denote the resulting lattices by $X_i,B_i,W_i,Y_i,Z_i.$
\item[b)] As $\kappa$-vector spaces, the dimensions of $A_i/B_i, W_i/X_i, Y_i/Z_i$ are at most 1.
\end{itemize}
\end{Rmk}

Let $L=\bigoplus_i L_i$ be a Jordan splitting. We assign a type to each $L_i$ as follows:
\[\left \{
  \begin{array}{l l}
   I & \quad  \textit{if $L_i$ is of parity type I};\\
   I^o & \quad \textit{if $L_i$ is of parity type I and the rank of $L_i$ is odd};\\
  I^e & \quad \textit{if $L_i$ is of parity type I and the rank of $L_i$ is even};\\
II & \quad  \textit{if $L_i$ is of parity type II}.\\
    \end{array} \right.\]
    In addition, we say that $L_i$ is
\[\left \{
  \begin{array}{l l}
  \textit{bound} & \quad  \textit{if at least one of  $L_{i-1}$ or $L_{i+1}$ is of parity type I};\\
 \textit{free} & \quad \textit{if both  $L_{i-1}$ and $L_{i+1}$ are of parity type II}.\\
    \end{array} \right.\]
Assume that a lattice $L_i$ is \textit{free} \textit{of type} $\textit{I}^e$.
We denote by $\bar{V_i}$ the $\kappa$-vector space $B_i/Z_i$.
Then we say that $L_i$ is
\[
 \left\{
  \begin{array}{l l}
  \textit{of type I}^e_1   & \quad  \text{if the dimension of  $\bar{V_i}$ is odd, equivalently, $X_i= Z_i$};\\
  \textit{of type I}^e_2   &   \quad  \text{otherwise}.\\
    \end{array} \right.
\]
We stress that we do not assign a type $\textit{I}^e_1$ or $\textit{I}^e_2$ to \textit{bound} lattices.
Notice that each type of $L_i$ is independent of the choice of a Jordan splitting.

\begin{Exa}
When $L$ is \textit{unimodular}, we assign a type to $L$ according to the shape of $K^{\prime}$ described in Theorem 2.4 as follows:
\[
      \begin{array}{c|c}
      K^{\prime} & \mathrm{type~of~} L \\
      \hline
      \emptyset & \textit{II}\\
      (\epsilon) & \textit{I}^o\\
       A(1, 2\gamma) \textit{ with $\gamma$ unit}   & \textit{I}^e_1\\
      A(1, 2\gamma) \textit{ with $\gamma\in (2)$} & \textit{I}^e_2\\
            \end{array}
    \]
\end{Exa}

\begin{Rmk}
We describe all these 6 lattices explicitly.
We use the following conventions. $\mathcal{L}_i$ denotes $\oplus_{j\neq i}2^{max\{0,i-j\}}L_j$.
We denote by $\mathcal{M}_i$ the $\bigoplus_i A_i(0,0) \oplus K$ of Theorem 2.4, for any $i$, when $L_i$ is \textit{of type I}.
When $L_i$ is \textit{of type II}, let $\mathcal{M}_i=L_i$.
Theorem 2.4 involves a basis for a lattice $K^{\prime}$, which we write as $\{e_1^{(i)}\}$ or $\{e_1^{(i)}, e_2^{(i)}\}$
if $L_i$ is \textit{of type $I^o$} or \textit{of type $I^e$}, respectively.

For all cases, we have $A_i=\mathcal{L}_i\oplus L_i$ and $X_i=\mathcal{L}_i\oplus 2L_i$.
If $L_i$ is \textit{of type I}, then the vector $e\in A_i/X_i$ is $(0, \cdots, 0, 1)$. Based on this, the lattices $B_i, W_i, Y_i$ are as follows:
\begin{center}
    \begin{tabular}{| l | l | l | l |}
    \hline
    Type & $B_i$ & $W_i$ & $Y_i$ \\ \hline
    $I^o$ & $\mathcal{L}_i\oplus \mathcal{M}_i \oplus 2A e_1^{(i)}$ & $\mathcal{L}_i\oplus \cdot 2 \mathcal{M}_i \oplus Ae_1^{(i)}$ & $X_i$ \\ \hline
  $I^e$ & $\mathcal{L}_i\oplus \mathcal{M}_i \oplus 2A e_1^{(i)}\oplus A e_2^{(i)}$ &
  $\mathcal{L}_i\oplus 2 \mathcal{M}_i \oplus 2Ae_1^{(i)} \oplus A e_2^{(i)}$& $W_i$ \\ \hline
$II$ & $A_i$ & $X_i$ & $X_i$\\ \hline
    \end{tabular}
\end{center}

The description of $Z_i$ is a little complicated.
Note that when $L_i$ is    \textit{free of type $I^e_1$} or \textit{bound}, the dimension of $Y_i/Z_i$ as a $\kappa$-vector space is \textit{1}.
We describe it case by case below.
Main idea is to start from $Y_i$ then to find a co-rank \textit{1} sublattice as the kernel of the associated linear form $\frac{1}{2^{i+1}}q$ mod $2$ on $Y_i/2Y_i$.

\begin{enumerate}
\item If $L_i$ is \textit{free of type $I^o$, $I^e_2$, or II}, then $Z_i=Y_i$.
\item If $L_i$ is \textit{free of type $I^e_1$}, then $Z_i=X_i$.
\item Let $\mathcal{J}=\{j\in \{i-1,  i+1\}| \textit{$L_j$ is of type I}\}$.
Let $L_i$ be \textit{bound of type $I^e$} and $\gamma_i$ be an element in $A$ such that $L_i=2^i\cdot\left(\mathcal{M}_i\oplus A(1, 2\gamma_i)\right)$.

If $\gamma_i$ is a unit in $A$, then
\[Z_i=\bigoplus_{j\notin \{i, i\pm 1\}}2^{max\{0, i-j\}}L_j\oplus 2^{max\{0, i-j\}}\left(\bigoplus_{j\in \{i\pm 1\}}\mathcal{M}_j\oplus Ae_2^{(j)}\right)
\oplus  \left(2\mathcal{M}_i\oplus 2Ae_1^{(i)}\right)\]
\[\oplus \left\{\left(\sum_{j\in \mathcal{J}}2^{max\{0, i-j\}}\cdot a_je_1^{(j)}\right)+\left(\sqrt{\gamma_i}\cdot a_ie_2^{(i)}\right)
|\textit{each }a_j, a_i\in A, \left(\sum_{j\in \mathcal{J}}a_j\right)+\sqrt{\gamma_i}\cdot a_i\in (2)\right\},\]
where the $e_2^{(j)}$  factor should be ignored for those $j \in\{i\pm 1\}$  such that $L_j$  is not of type $I^e$.
And $\sqrt{\gamma_i}$ is an element of $A$ such that $\sqrt{\gamma_i}$ mod $2$ $= \sqrt{\widetilde{\gamma_i}} (\in \kappa)$
where $\sqrt{\widetilde{\gamma_i}}$ is as explained at the beginning of Appendix.
If $\gamma_i$ is not a unit in $A$, then
\[Z_i=\bigoplus_{j\notin \{i, i\pm 1\}}2^{max\{0, i-j\}}L_j\oplus 2^{max\{0, i-j\}}\left(\bigoplus_{j\in \{i\pm 1\}}\mathcal{M}_j\oplus Ae_2^{(j)}\right)
\oplus  \left(2\mathcal{M}_i\oplus 2Ae_1^{(i)}\oplus Ae_2^{(i)}\right)\]
\[\oplus \left\{\sum_{j\in \mathcal{J}}2^{max\{0, i-j\}}\cdot a_je_1^{(j)}
|\textit{each }a_j\in A, \sum_{j\in \mathcal{J}}a_j\in (2)\right\},\]
where the $e_2^{(j)}$  factor should be ignored for those $j \in\{i\pm 1\}$  such that $L_j$  is not of type $I^e$.
\item
If $L_i$ is \textit{bound of type $I^o$ or $II$}, then
\[Z_i=\bigoplus_{j\notin \{i, i\pm 1\}}2^{max\{0, i-j\}}L_j\oplus 2^{max\{0, i-j\}}\left(\bigoplus_{j\in \{i\pm 1\}}\mathcal{M}_j\oplus Ae_2^{(j)}\right)
\oplus  \left(2\mathcal{M}_i\oplus 2Ae_1^{(i)}\right)\]
\[\oplus \left\{\sum_{j\in \mathcal{J}}2^{max\{0, i-j\}}\cdot a_je_1^{(j)}
|\textit{each }a_j\in A, \sum_{j\in \mathcal{J}}a_j\in (2)\right\},\]
where the $e_2^{(j)}$  factor should be ignored for those $j \in\{i\pm 1\}$  such that $L_j$  is not of type $I^e$,
and $e_1^{(i)}$ should be ignored if $L_i$ is  \textit{of type II}.
\end{enumerate}

\end{Rmk}

\begin{Rmk}
These 6 lattices have the following containment:
\begin{enumerate}
\item When $L_i$ is \textit{free of type $I^o$}, $A_i\varsupsetneq B_i \varsupsetneq Y_i=X_i=Z_i$, $A_i\varsupsetneq W_i \varsupsetneq Y_i$.
\item When $L_i$ is \textit{free of type $I^e_2$}, $A_i\varsupsetneq B_i \varsupsetneq W_i=Y_i=Z_i \varsupsetneq X_i$.
\item When $L_i$ is \textit{free of type $I^e_1$}, $A_i\varsupsetneq B_i \varsupsetneq W_i=Y_i \varsupsetneq X_i=Z_i$.
\item When $L_i$ is \textit{free of type $II$}, $A_i= B_i \varsupsetneq W_i=Y_i=X_i=Z_i$.
\item When $L_i$ is \textit{bound of type $I^o$}, $A_i\varsupsetneq B_i \varsupsetneq Y_i=X_i\varsupsetneq Z_i$, $A_i\varsupsetneq W_i \varsupsetneq Y_i$.
\item When $L_i$ is \textit{bound of type $I^e$}, $A_i\varsupsetneq B_i \varsupsetneq W_i=Y_i \varsupsetneq X_i$, $Y_i\varsupsetneq Z_i$.
\item When $L_i$ is \textit{bound of type $II$}, $A_i= B_i \varsupsetneq W_i=Y_i=X_i\varsupsetneq Z_i$.

\end{enumerate}
\end{Rmk}
From now on, the pair ($L,q $) is fixed throughout this paper and $\langle -,-\rangle$ denotes $\langle -,-\rangle_q$.

\section{The smooth integral model $\underline{G}$}

Let $\underline{G}^{\prime}$ be a naive integral model of the orthogonal group $\mathrm{O}(V, q)$, where $V=L\otimes_AF$, such that
for any commutative $A$-algebra $R$,
$$\underline{G}^{\prime}(R)=\mathrm{Aut}_{R}(L\otimes_AR, q\otimes_AR).$$
Let $\underline{G}$ be the smooth group scheme model of $\mathrm{O}(V, q)$
such that
$$\underline{G}(R)=\underline{G}^{\prime}(R)$$
for any \'etale $A$-algebra $R$.
Notice that $\underline{G}$ is uniquely determined with these properties by Proposition 3.7 in \cite{GY}.
For a detailed exposition of the relation between the local density of $(L, q)$ and $\underline{G}$, see Section 3 of \cite{GY}.

In this section, we give an explicit construction of the smooth integral model $\underline{G}$.
The construction of $\underline{G}$ is based on that of Section 5 in \cite{GY}.
Let $K=\mathrm{Aut}_A(L,q) \subset \mathrm{GL}_F(V)$, and $\bar{K}=\mathrm{Aut}_{A^{sh}}(L\otimes_A A^{sh},q\otimes_A A^{sh})$,
 where $A^{sh}$ is the strict henselization of $A$.
 To ease the notation, we say $g\in \bar{K}$ stabilizes a lattice $M\subseteq V$ if $g(M\otimes_AA^{sh})=M\otimes_AA^{sh}$.

\subsection{Main construction}

    In this subsection, we observe properties of elements of $\bar{K}$ and their matrix interpretation.
    We choose a Jordan splitting $L=\bigoplus_iL_i$ and, for each $i$, fix a basis of $L_i$ according to Theorem 2.4.
    Let $g$ be an element of $\bar{K}$.

   \begin{enumerate}
    \item[(1)] First of all, as explained in  Section 5.1 of \cite{GY},
    $g$ stabilizes the dual lattice $L^{\perp}$ of $L$.
    It is equivalent to saying that $g$ stabilizes $A_i$'s for every integer $i$.
    We interpret this fact in terms of matrices.\\
     Let $n_i=\mathrm{rank}_{A}L_i$, and $n=\mathrm{rank}_{A}L=\sum n_i$.
 Assume that $n_i=0$ unless $0\leq i <  N$.
 We always divide a matrix $g$ of size $n \times n$ into $N^2$ blocks such that the block in position $(i, j)$ is of size $n_i\times n_j$.
 For simplicity, the row and column numbering starts at $0$ rather than $1$.
The fact that $g$ stabilizes $L^{\bot}$ means that the $(i,j)$-block
 has entries in $2^{max\{0,j-i\}}A^{sh}$.\\
 From now on, we write
\[g= \begin{pmatrix} 2^{max\{0,j-i\}}g_{i,j} \end{pmatrix}.\]

 \item[(2)]
   $g$ stabilizes $A_i, B_i, W_i, X_i$ and induces the identity on $A_i/B_i$ and $W_i/X_i$.
 We also interpret these facts in terms of matrices as described below:
\begin{itemize}
\item[a)]     If $L_i$ is \textit{of type} $\textit{I}^o$, the diagonal $(i,i)$-block $g_{i,i}$ is of the form
     \[\begin{pmatrix} s_i&2y_i\\ 2v_i&1+2z_i \end{pmatrix}\in \mathrm{GL}_{n_i}(A^{sh}),\]
     where $s_i$ is an $(n_i-1) \times (n_i-1)$-matrix, etc.
\item[b)]      If $L_i$ is \textit{of type} $\textit{I}^e$, the diagonal $(i,i)$-block $g_{i,i}$ is of the form
     \[\begin{pmatrix} s_i&r_i&2t_i\\ 2y_i&1+2x_i&2z_i\\ v_i&u_i&1+2w_i \end{pmatrix}\in \mathrm{GL}_{n_i}(A^{sh}),\]
     where $s_i$ is an $(n_i-2) \times (n_i-2)$-matrix, etc.\\
     \end{itemize}

\item[(3)]
     $g$  stabilizes $Z_i$ and induces the identity on $W_i/(X_i\cap Z_i)$.
     To prove the latter, we choose an element $w$ in $W_i$.  It suffices to show that $gw-w\in X_i\cap Z_i$.
     By (2), it suffices to show that $gw-w\in Z_i$.
     This follows from the computation:

     $\frac{1}{2}\cdot \frac{1}{2^i}q(gw-w)=\frac{1}{2}(2\cdot \frac{1}{2^i}q(w)-2\cdot \frac{1}{2^i}\langle gw,w\rangle)=
     \frac{1}{2^i}(q(w)-\langle w+x,w\rangle)=\frac{1}{2^i}\langle x,w\rangle=0$ mod 2, where $gw=w+x$ for some $x\in X_i$.

    In terms of matrices, we have the following:
\begin{enumerate}
\item[a)] If $L_i$ is \textit{of type} $\textit{II}$ or \textit{free}, there is no new constraint.
     Thus we assume that $L_i$ is \textit{bound} \textit{of type} $\textit{I}$.
\item[b)]      If $L_{i-1}$ is \textit{of type} $\textit{I}^o$ and $L_{i+1}$ is \textit{of type} $\textit{II}$, then
     the $(n_{i-1}, n_i)^{th}$- entry of $g_{i-1, i}$ lies in the prime ideal (2).
    \item[c)]  If $L_{i-1}$ is \textit{of type} $\textit{I}^e$ and $L_{i+1}$ is \textit{of type} $\textit{II}$, then the $(n_{i-1}-1, n_i)^{th}$- entry of $g_{i-1, i}$ lies in  the prime ideal (2).
      \item[d)]  If $L_{i-1}$ is \textit{of type} $\textit{II}$ and $L_{i+1}$ is \textit{of type} $\textit{I}^o$, then the $(n_{i+1}, n_i)^{th}$- entry of $g_{i+1, i}$ lies in  the prime ideal (2).
     \item[e)]  If $L_{i-1}$ is \textit{of type} $\textit{II}$ and $L_{i+1}$ is \textit{of type} $\textit{I}^e$, then the $(n_{i+1}-1, n_i)^{th}$- entry of $g_{i+1, i}$ lies in  the prime ideal (2).
    \item[f)]  If $L_{i-1}$ and $L_{i+1}$ are \textit{of type} $\textit{I}^o$,
     then the sum of the $(n_{i-1}, n_i)^{th}$- entry of $g_{i-1, i}$ and the $(n_{i+1}, n_i)^{th}$- entry of $g_{i+1, i}$ lies in  the prime ideal (2).
     \item[g)]  If $L_{i-1}$ is \textit{of type} $\textit{I}^o$ and $L_{i+1}$ is \textit{of type} $\textit{I}^e$,
    then the sum of the $(n_{i-1}, n_i)^{th}$- entry of $g_{i-1, i}$ and the $(n_{i+1}-1, n_i)^{th}$- entry of $g_{i+1, i}$ lies in  the prime ideal (2).
     \item[h)]  If $L_{i-1}$ is \textit{of type} $\textit{I}^e$ and $L_{i+1}$ is \textit{of type} $\textit{I}^o$,
     then the sum of the $(n_{i-1}-1, n_i)^{th}$- entry of $g_{i-1, i}$ and
      the $(n_{i+1}, n_i)^{th}$- entry of $g_{i+1, i}$ lies in  the prime ideal (2).
     \item[i)]  If $L_{i-1}$ and $L_{i+1}$ are \textit{of type} $\textit{I}^e$,
     then the sum of the $(n_{i-1}-1, n_i)^{th}$- entry of $g_{i-1, i}$ and
     the $(n_{i+1}-1, n_i)^{th}$- entry of $g_{i+1, i}$ lies in  the prime ideal (2).\\
\end{enumerate}

 \item[(4)]
  The fact that $g$ induces the identity on $W_i/(X_i\cap Z_i)$ for all $i$ is equivalent to
  the fact that $g$ induces the identity on $(X_i\cap Z_i)^{\perp}/W_i^{\perp}$ for all $i$.

  We give another description of this condition.
  Since the space V has a non-degenerate bilinear form $\langle-,-\rangle$, V can be identified with its own dual.
 We define the adjoint $g^{\ast}$ characterized by $\langle gv,w \rangle=\langle v, g^{\ast}w  \rangle$.
 Then the fact that $g$ induces the identity on $(X_i\cap Z_i)^{\perp}/W_i^{\perp}$ is
 the same as the fact that $g^{\ast}$ induces the identity on $W_i/(X_i\cap Z_i)$.

To interpret the above in terms of matrices, we notice that $g^{\ast}$ is an element of $\bar{K}$ as well.
 Thus if we apply (3) to $g^{\ast}$, we have the following:
\begin{enumerate}
\item[a)]  If $L_i$ is \textit{of type} $\textit{II}$ or \textit{free}, then there is  no new constraint.
Thus we assume that $L_i$ is \textit{bound} \textit{of type} $\textit{I}$.
    \item[b)]  If $L_{i-1}$ is \textit{of type} $\textit{I}$, $L_i$ is \textit{of type} $\textit{I}^o$ and $L_{i+1}$ is \textit{of type} $\textit{II}$,
     then the $(n_i, n_{i-1})^{th}$- entry of $g_{i, i-1}$ lies in  the prime ideal (2).
     \item[c)]  If $L_{i-1}$ is \textit{of type} $\textit{I}$, $L_i$ is \textit{of type} $\textit{I}^e$ and $L_{i+1}$ is \textit{of type} $\textit{II}$,
     then the $(n_i-1, n_{i-1})^{th}$- entry of $g_{i, i-1}$ lies in  the prime ideal (2).
     \item[d)] If $L_{i-1}$ is \textit{of type} $\textit{II}$, $L_i$ is \textit{of type} $\textit{I}^o$ and $L_{i+1}$ is \textit{of type} $\textit{I}$,
     then the $(n_i, n_{i+1})^{th}$- entry of $g_{i, i+1}$ lies in  the prime ideal (2).
     \item[e)]  If $L_{i-1}$ is \textit{of type} $\textit{II}$, $L_i$ is \textit{of type} $\textit{I}^e$ and $L_{i+1}$ is \textit{of type} $\textit{I}$,
     then the $(n_i-1, n_{i+1})^{th}$- entry of $g_{i, i+1}$ lies in  the prime ideal (2).
     \item[f)] If $L_{i-1}$ and $L_{i+1}$ are \textit{of type} $\textit{I}$ and $L_i$ is \textit{of type} $\textit{I}^o$,
     then the sum of the $(n_i, n_{i-1})^{th}$- entry of $g_{i, i-1}$ and the $(n_i, n_{i+1})^{th}$- entry of $g_{i, i+1}$ lies in  the prime ideal (2).
     \item[g)]  If $L_{i-1}$ and $L_{i+1}$ are \textit{of type} $\textit{I}$ and $L_i$ is \textit{of type} $\textit{I}^e$, then
     the sum of the $(n_i-1, n_{i-1})^{th}$- entry of $g_{i, i-1}$ and the $(n_i-1, n_{i+1})^{th}$- entry of $g_{i, i+1}$ lies in  the prime ideal (2).\\
\end{enumerate}

   \item[(5)] By combining (3) and (4), we obtain the following:

\begin{enumerate}
      \item[a)]  If $L_i$ and $L_{i+1}$ are \textit{of type} $\textit{I}^o$, the $(n_i, n_{i+1})^{th}$ (resp. $(n_{i+1}, n_i)^{th}$)- entry of $g_{i, i+1}$
      (resp. $g_{i+1, i}$) lies in the prime ideal (2).
    \item[b)]  If $L_i$ and $L_{i+1}$ are \textit{of type} $\textit{I}^e$,
    the $(n_i-1, n_{i+1})^{th}$ (resp. $(n_{i+1}-1, n_i)^{th}$)- entry of $g_{i, i+1}$ (resp. $g_{i+1, i}$) lies in the prime ideal (2).
    \item[c)]  If $L_i$ is \textit{of type} $\textit{I}^o$ and $L_{i+1}$ is \textit{of type} $\textit{I}^e$,
    the $(n_i, n_{i+1})^{th}$ (resp. $(n_{i+1}-1, n_i)^{th}$)- entry of $g_{i, i+1}$ (resp. $g_{i+1, i}$) lies in  the prime ideal (2)
   \item[d)]  If $L_i$ is \textit{of type} $\textit{I}^e$ and $L_{i+1}$ is \textit{of type} $\textit{I}^o$,
    the $(n_i-1, n_{i+1})^{th}$ (resp. $(n_{i+1}, n_i)^{th}$)- entry of $g_{i, i+1}$ (resp. $g_{i+1, i}$) lies in  the prime ideal (2).\\
\end{enumerate}

\item[(6)] Consequently, we have the following matrix form for $g$:
  $$g= \begin{pmatrix} 2^{max\{0,j-i\}}g_{i,j} \end{pmatrix},$$
 where $g_{i,i}$ is as described in  (2), and $g_{i,i+1}$ and $g_{i+1,i}$ are as described in  (5).
\end{enumerate}

\subsection{Construction of $\underline{M}^{\ast}$}

We first state the following lemma.
\begin{Lem}
Let $V$ be an $F$-vector space.
Let $\{\phi_i\}\subset \mathrm{Hom}_F(V, F)$ be a finite set $F$-spanning  $\mathrm{Hom}_F(V, F)$.
Define a functor from the category of commutative flat $A$-algebras to the category of sets as follows:
\[X : R \mapsto \{x\in V\otimes_AR \mid (\phi_i\otimes_AR)(x)\in R \mathrm{~for~all~}i\}.\]
Then $X$ is representable by the affine space $\textbf{A}^{\mathrm{dim~}_FV}$ over $A$ of dimension $\mathrm{dim~}_FV$.
\end{Lem}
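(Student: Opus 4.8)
The plan is to eliminate the redundancy in the spanning set $\{\phi_i\}$ by passing to an $A$-lattice of functionals, and then to read off global coordinates on $X$ from the associated dual basis. Throughout I interpret $X(R)$, for a flat $A$-algebra $R$, as $\{x \in V\otimes_A R : \phi_i(x)\in R \text{ for all } i\}$, where $\phi_i$ is extended $(F\otimes_A R)$-linearly; here flatness of $R$ over the discrete valuation ring $A$ guarantees that $R$ is torsion-free, hence embeds into $F\otimes_A R$, so that the conditions ``$\phi_i(x)\in R$'' inside $F\otimes_A R$ make sense and cut out an honest subset of $V\otimes_A R = V\otimes_F (F\otimes_A R)$.

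First I would introduce $V^{\ast}=\mathrm{Hom}_F(V,F)$, set $d=\dim_F V=\dim_F V^{\ast}$, and let $M^{\ast}\subseteq V^{\ast}$ be the $A$-submodule generated by the finite set $\{\phi_i\}$. Since $M^{\ast}$ is a finitely generated torsion-free module over the principal ideal domain $A$, it is free; and since the $\phi_i$ span $V^{\ast}$ over $F$ by hypothesis, $M^{\ast}$ is a full lattice of rank $d$. Fix an $A$-basis $\psi_1,\dots,\psi_d$ of $M^{\ast}$. The crucial observation is that the defining condition is unchanged if the $\phi_i$ are replaced by this basis: because $\{\phi_i\}$ and $\{\psi_k\}$ generate the same $A$-module and $A\subseteq R$, each $\phi_i(x)$ is an $A$-combination of the $\psi_k(x)$ and conversely, so $\phi_i(x)\in R$ for all $i$ if and only if $\psi_k(x)\in R$ for all $k$.

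Then I would pass to the dual basis. As $\psi_1,\dots,\psi_d$ is in particular an $F$-basis of $V^{\ast}$, let $e_1,\dots,e_d\in V$ be the dual basis characterized by $\psi_k(e_j)=\delta_{kj}$. Writing an arbitrary element $x\in V\otimes_A R$ as $x=\sum_{k=1}^d t_k e_k$ with $t_k=\psi_k(x)\in F\otimes_A R$, the condition becomes simply $t_k\in R$ for all $k$. Hence $x\mapsto(t_1,\dots,t_d)$ is a bijection $X(R)\xrightarrow{\sim} R^d$, and one checks it is natural in $R$ because all the identifications are induced by the fixed $F$-linear maps $\psi_k$ and the fixed vectors $e_k$. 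This exhibits $X$ as the functor $R\mapsto R^d$, that is, as $\mathbf{A}^d$ with $d=\dim_F V$, which proves representability.

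The step requiring the most care is the reduction to the lattice $M^{\ast}$: one must verify that replacing the given, possibly badly normalized, spanning set by an $A$-basis of the lattice it generates leaves $X(R)$ unchanged for every flat $R$ simultaneously, which is precisely where the hypotheses ``finite'', ``$F$-spanning'', and ``$R$ an $A$-algebra'' are used together. Once the functor is rewritten in the coordinates $t_k=\psi_k(x)$, the remaining verifications of bijectivity and naturality are formal.
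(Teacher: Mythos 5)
Your proof is correct and follows essentially the same route as the paper's: pass to an $A$-basis $\psi_1,\dots,\psi_d$ of the $A$-span of $\{\phi_i\}$ and observe that $X(R)=\bigoplus_k R e_k$ for the dual basis $e_k$. The paper states this in three lines; your write-up merely makes explicit the two verifications it leaves implicit (that the $A$-span is free of full rank $d$, and that replacing $\{\phi_i\}$ by the $\psi_k$ does not change $X(R)$ since both generate the same $A$-module and $A\subseteq R$).
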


\begin{proof}
Let $\{\psi_1, \cdots, \psi_m\}$
be a basis of an $A$-span of $\{\phi_i\}$.
Let $v_1, \cdots, v_m$ be the dual basis.
Then we have the following:
\[X(R)=\oplus_i Rv_i.\]
This completes the proof.
\end{proof}

    We define a functor from the category of commutative flat $A$-algebras to the category of monoids as follows.
    For any commutative flat $A$-algebra $R$, set
    $$m\in \underline{M}(R) \textit{ if and only if } m \in \mathrm{End}_{R}(L \otimes_A R)$$ with the following conditions:
\begin{enumerate}
   \item[(1)]   $m$ stabilizes $A_i\otimes_A R,B_i\otimes_A R,W_i\otimes_A R,X_i\otimes_A R,Y_i\otimes_A R,Z_i\otimes_A R$ for all $i$.
   \item[(2)]   $m $ induces the identity on $A_i\otimes_A R/ B_i\otimes_A R, W_i\otimes_A R/(X_i\cap Z_i)\otimes_A R,
    (X_i\cap Z_i)^{\perp}\otimes_A R/W_i^{\perp}\otimes_A R\mathrm{~for~all~}i$.
\end{enumerate}

    Then by the above lemma, the functor $\underline{M}$ is representable by a unique flat $A$-algebra $A(\underline{M})$ which is a polynomial ring over $A$ in $n^2$ variables.
    Moreover, it is easy to see that $\underline{M}$ has the structure of a scheme in monoids by showing that $\underline{M}(R)$ is  closed under multiplication.

    We stress that the above description of $\underline{M}(R)$, the set of $R$-points on the scheme $\underline{M}$, is no longer true when $R$ is a $\kappa$-algebra.
    Now suppose that $R$ is a $\kappa$-algebra.
    By choosing a basis for  $L$ as in Section 3.1, we describe each element of $\underline{M}(R)$ formally as a matrix
    $\begin{pmatrix} 2^{max\{0,j-i\}}m_{i,j} \end{pmatrix}$, where $m_{i,j}$ is an  $(n_i \times n_j)$-matrix with entries in $R$  as described in Section 3.1.
    To multiply $(m_{i,j})$ and $(m^{\prime}_{i,j})$, we refer to the description of Section 5.3 in \cite{GY}.

 Let $d$ be the determinant of the matrix $\begin{pmatrix} 2^{max\{0,j-i\}}m_{i,j} \end{pmatrix}$.
 Then $\mathrm{Spec} (A(\underline{M})_d)$ is an open subscheme of $\underline{M}$.

 We define a functor from the category of commutative $A$-algebras to the category of groups as follows.
For $m\in \underline{M}(R)$ with $R$ a commutative $A$-algebra,
 set
 $$\underline{M}^{\ast}(R)=\{ m \in \underline{M}(R) :  \textit{there exists $m^{-1}\in \underline{M}(R)$ such that $m\cdot m^{-1}=m^{-1}\cdot m=1$}\}.$$
 We claim that $\underline{M}^{\ast}$ is representable by $\mathrm{Spec} (A(\underline{M})_d)$.
 Notice that $\mathrm{Spec} (A(\underline{M})_d)(R)$, the set of $R$-points of $\mathrm{Spec} (A(\underline{M})_d)$ for a flat $A$-algebra $R$, is characterized by
 $$\{m\in\underline{M}(R):
   \textit{there exists $\widetilde{m}^{-1} \in \mathrm{End}_R(L\otimes_AR)$ such that $m\cdot \widetilde{m}^{-1}=\widetilde{m}^{-1}\cdot m=1$} \}.$$
   Note that $\underline{M}(R) \subset \mathrm{End}_R(L\otimes_AR)$ for a flat $A$-algebra  $R$.

 We first show that  $\widetilde{m}^{-1} ~ (\in \mathrm{End}_R(L\otimes_AR))$ with $m\in \underline{M}(R)$ is an element of $\underline{M}(R)$ for every flat $A$-algebra $R$.
 To verify this statement, it suffices to show that $\widetilde{m}^{-1}$ satisfies conditions (1) and (2) defining $\underline{M}$
 and it follows from Lemma 3.2 which will be stated below.
For any flat $A$-algebra $R$, we consider the following well-defined map:
$$\mathrm{Spec} (A(\underline{M})_d)(R) \longrightarrow \mathrm{Spec} (A(\underline{M})_d)(R), m \mapsto \widetilde{m}^{-1}.$$
Since $\mathrm{Spec} (A(\underline{M})_d)$ is affine and flat, this map is represented by a morphism as schemes.
This implies that if $m\in \mathrm{Spec} (A(\underline{M})_d)(R)$ then $\widetilde{m}^{-1}\in \underline{M}(R)$ for any commutative $A$-algebra $R$.
Therefore, $$\mathrm{Spec} (A(\underline{M})_d)(R)=\underline{M}^{\ast}(R)$$ for any commutative $A$-algebra $R$.
We now state Lemma 3.2.
 \begin{Lem}
Let $L^{\prime}$ be a sublattice of $L$ and $m$ be an element of $\mathrm{Spec} (A(\underline{M})_d)(R)$, where $R$ is a flat $A$-algebra.
Assume that $m$ stabilizes $L^{\prime}\otimes_AR$. Then this lattice $L^{\prime}\otimes_AR$ is stabilized by $\widetilde{m}^{-1}$ as well.
 \end{Lem}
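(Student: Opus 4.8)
The plan is to reduce the statement to a determinant computation. Write $W=L\otimes_A R$ and $N=L'\otimes_A R$. Since $A$ is a discrete valuation ring, the sublattice $L'$ is a free $A$-module, so $N$ is a free $R$-module of rank $r=\mathrm{rank}_A L'$; and because $R$ is flat over $A$, the inclusion $L'\hookrightarrow L$ stays injective after $\otimes_A R$, so we may regard $N\subseteq W$. By the characterization of $\mathrm{Spec}\,A(M)_d(R)$ recalled just above the lemma, $m$ is invertible, i.e. $m\in\mathrm{Aut}_R(W)$ with $\det m\in R^{\times}$. The hypothesis $m(N)\subseteq N$ means that $m$ restricts to an $R$-linear endomorphism $\phi:=m|_N\in\mathrm{End}_R(N)$. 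I claim it suffices to prove that $\phi$ is an \emph{automorphism} of $N$: indeed, for $x\in N$ we then have $\phi(\phi^{-1}x)=m(\phi^{-1}x)=x$, and applying $m^{-1}$ in $\mathrm{Aut}_R(W)$ gives $m^{-1}x=\phi^{-1}x\in N$, so that $m^{-1}(N)\subseteq N$, which is exactly what we want.

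Since $N$ is finite free over $R$, the map $\phi$ is an automorphism precisely when $\det\phi\in R^{\times}$, and establishing this is the heart of the matter. Here I would use that the lattices occurring in our construction — the $A_i,B_i,W_i,X_i,Y_i,Z_i$ and the duals $W_i^{\perp}$, $(X_i\cap Z_i)^{\perp}$ — are all full-rank sublattices of $V$, so that $r=n:=\mathrm{rank}_A L$ and $L'\otimes_A F=V$. Fixing $A$-bases of $L$ and of $L'$, the change-of-basis matrix $T\in M_n(A)$ satisfies $0\neq\det T\in A$ (a power of the uniformizer $\pi$ times a unit). The difficulty is that $L'$ need not be a direct summand of $L$, so $T$ is not invertible over $R$ and one cannot simply split $\det m$ as a product $\det\phi\cdot\det\bar m$ of determinants on $N$ and on the induced map $\bar m$ of $W/N$.

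To get around this I would base change along the inclusion $R\hookrightarrow R\otimes_A F$, which is injective because $R$ is flat, hence torsion-free, over the discrete valuation ring $A$. Over $R\otimes_A F$ the element $\pi$ is invertible, so $\det T$ becomes a unit and $T\in\mathrm{GL}_n(R\otimes_A F)$; the two bases then differ by an invertible matrix, and the usual change-of-basis identity gives $\phi=T^{-1}mT$ in $M_n(R\otimes_A F)$, whence $\det\phi=\det m$ there. As $\det\phi$ and $\det m$ both lie in $R$, which injects into $R\otimes_A F$, this equality already holds in $R$, so $\det\phi=\det m\in R^{\times}$; this proves $\phi\in\mathrm{Aut}_R(N)$ and completes the argument. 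I expect the step $\det\phi\in R^{\times}$ to be the only real obstacle; inverting $\pi$ is what lets one both avoid the failure of $L'$ to be a summand and sidestep the fact that $R$ need not be Noetherian (so a direct argument via the ascending chain $N\subseteq m^{-1}N\subseteq m^{-2}N\subseteq\cdots$ in $W$ is not immediately available). For a general, possibly non-full-rank, sublattice $L'$ one can alternatively descend all the relevant data to a finitely generated — hence Noetherian and still torsion-free — $A$-subalgebra of $R$ and run precisely that chain argument there.
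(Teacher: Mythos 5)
Your argument is correct for full-rank sublattices and takes a genuinely different route from the paper's. The paper argues by contradiction with an ascending chain: if $m^{-1}$ did not stabilize $L'\otimes_A R$, one would get $L'\otimes_A R\varsubsetneq m^{-1}(L'\otimes_A R)\varsubsetneq m^{-2}(L'\otimes_A R)\varsubsetneq\cdots$, contradicting the assertion that $L\otimes_A R$ is Noetherian. Your determinant argument --- restrict $m$ to $N=L'\otimes_A R$, conjugate by the change-of-basis matrix $T$ after inverting $2$ to get $\det(m|_N)=\det m\in R^{\times}$ inside $R\hookrightarrow R\otimes_A F$, and conclude that $m|_N$ is an automorphism of the finite free $R$-module $N$ --- buys something real: a flat $A$-algebra need not be Noetherian, so $L\otimes_A R\cong R^n$ need not satisfy the ascending chain condition, and the Noetherian appeal is precisely the weak point you identified in the chain argument; your proof replaces it by the injectivity of $R\hookrightarrow R\otimes_A F$, which genuinely does follow from flatness. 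Two caveats. First, your main argument requires $L'$ to have full rank; this is harmless because every lattice to which the lemma is actually applied ($A_i$, $B_i$, $W_i$, $X_i$, $Y_i$, $Z_i$ and the duals $W_i^{\perp}$, $(X_i\cap Z_i)^{\perp}$) contains some $2^kL$ and hence is full rank, but since the lemma is stated for an arbitrary sublattice you should say this explicitly. Second, the fallback you sketch for the non-full-rank case (descend to a finitely generated $A$-subalgebra $R_0$ and run the chain argument there) is not automatic as stated: the hypothesis $m(L'\otimes_A R)\subseteq L'\otimes_A R$ need not descend to $R_0$, because $(L'\otimes_A R)\cap(L\otimes_A R_0)$ can be strictly larger than $L'\otimes_A R_0$ unless $R_0$ is saturated so that $\pi^aR\cap R_0=\pi^aR_0$ for the relevant $a$ (e.g.\ $R=A[x]$, $R_0=A[\pi x]$ fails this). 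Neither caveat affects the way the lemma is used in the paper.
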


\begin{proof}
Since $m$ stabilizes $L^{\prime}\otimes_AR$, we have that $m\cdot L^{\prime}\otimes_AR \subset L^{\prime}\otimes_AR$.
In addition, $m$ is an element of $\mathrm{Spec} (A(\underline{M})_d)(R)$ and so the determinant of $m$ is a unit in $R$.
Therefore, $m\cdot L^{\prime}\otimes_AR = L^{\prime}\otimes_AR$.
This implies that $\widetilde{m}^{-1}\cdot L^{\prime}\otimes_AR = L^{\prime}\otimes_AR$.
\end{proof}

Therefore, we conclude that $\underline{M}^{\ast}$ is an open subscheme of $\underline{M}$, with generic fiber $M^{\ast}=GL_{F}(V)$,
and that $\underline{M}^{\ast}$ is smooth over $A$. Moreover, $\underline{M}^{\ast}$ is a group scheme since $\underline{M}$ is a scheme in monoids.

\begin{Rmk}
We give another description for the functor $\underline{M}$.
Let us define a functor from the category of commutative flat $A$-algebras to the category of rings as follows:

For any commutative flat $A$-algebra $R$, set
$$\underline{M}^{\prime}(R) \subset \{m \in \mathrm{End}_{R}(L \otimes_A R) \}$$ with the following conditions:
\begin{itemize}
    \item[(1)] $m$ stabilizes $A_i\otimes_A R,B_i\otimes_A R,W_i\otimes_A R,X_i\otimes_A R,Y_i\otimes_A R,Z_i\otimes_A R$ for all $i$.
     \item[(2)] $m $ maps $A_i\otimes_A R, W_i\otimes_A R, (X_i\cap Z_i)^{\perp}\otimes_A R$ into $B_i\otimes_A R, (X_i \cap Z_i)\otimes_A R, W_i^{\perp}\otimes_A R$, respectively.
\end{itemize}

    Then the functor $\underline{M}$ is the same as the functor $1+\underline{M}^{\prime}$,
    where $(1+\underline{M}^{\prime})(R)=\{1+m : m \in \underline{M}^{\prime}(R) \}$.
\end{Rmk}

 \subsection{Construction of $\underline{Q}$}

 Recall that the pair ($L, q$) is fixed throughout this paper and the lattices $A_i$, $B_i$, $W_i$, $X_i$, $Y_i$, $Z_i$ only depend on the quadratic pair $(L, q)$.
  For any flat $A$-algebra $R$, let $\underline{Q}(R)$ be the set of quadratic forms $f$ on $L\otimes_{A}R$
  such that $S(L,f)\subseteq S(L,q)$ and $f$ satisfies the following conditions:
\begin{enumerate}
 \item[a)] $\langle L\otimes_{A}R,A_i\otimes_{A}R\rangle_f \subset 2^iR$ for all $i$.
  \item[b)] $X_i\otimes_{A}R/2A_i\otimes_{A}R$ is contained in the kernel of the symmetric bilinear form $\langle-,-\rangle_{f,i}$ mod 2
 on $A_i\otimes_{A}R/2A_i\otimes_{A}R$. Here, $\langle -,- \rangle_{f,i}=\frac{1}{2^i}\langle -,- \rangle_f$.
  \item[c)] $B_i\otimes_{A}R/2A_i\otimes_{A}R$ is contained in the kernel of the linear form $\frac{1}{2^i}f$ mod 2 on $A_i\otimes_{A}R/2A_i\otimes_{A}R$.
  \item[d)] Assume $B_i\varsubsetneq A_i$. We have seen the existence of the unique vector $e \in A_i/X_i$
 such that $\langle v,e \rangle_{q,i}^2=\langle v,v \rangle_{q,i}$ mod 2 for every vector $v \in A_i/X_i$ in Section 2.3.
 Then $e\otimes 1 \in A_i \otimes_{A}R/X_i\otimes_{A}R$ also satisfies the condition that
 $\langle v,e\otimes 1 \rangle_{f,i}^2=\langle v,v \rangle_{f,i}$ mod 2 for every vector $v \in A_i\otimes_{A}R/X_i\otimes_{A}R$.
  \item[e)] $Y_i\otimes_{A}R/2A_i\otimes_{A}R$ is contained in the kernel of the symmetric bilinear form
 $\langle -,- \rangle_{f,i}$ mod 2 on $B_i\otimes_{A}R/2A_i\otimes_{A}R$.
  \item[f)] $Z_i\otimes_{A}R/2A_i\otimes_{A}R$ is contained in the kernel of the quadratic form $\frac{1}{2}\cdot \frac{1}{2^i} f$ mod 2 on $B_i\otimes_{A}R/2A_i\otimes_{A}R$.
  \item[g)] $\frac{1}{2^i} f$ mod 2 = $\frac{1}{2^i} q$ mod 2 on $A_i \otimes_{A}R/2A_i\otimes_{A}R$.
  \item[h)] $\frac{1}{2^i} f(w_i)-\frac{1}{2^i} q(w_i) \in (4)$, where $w_i \in W_i\otimes_{A}R$.
  \item[i)] $\langle a_i, w_i\rangle_{f,i} \equiv \langle a_i, w_i\rangle_{q,i}$ mod 2, where $a_i \in A_i\otimes_{A}R$ and $w_i \in W_i\otimes_{A}R$
  \item[j)] $\langle w_i^{\prime}, w_i\rangle_{f} \in R$ and $\langle z_i^{\prime}, z_i\rangle_{f} \in R$.
 Here, $w_i^{\prime} \in W_i^{\perp}\otimes_{A}R, w_i \in W_i\otimes_{A}R$ and $z_i^{\prime} \in (X_i\cap Z_i)^{\perp}\otimes_{A}R, z_i \in (X_i\cap Z_i)\otimes_{A}R$.
 In addition,
 $\langle w_i, z_i^{\prime}\rangle_{f}-\langle w_i, z_i^{\prime}\rangle_{q} \in R$.
 \end{enumerate}

We interpret the above conditions in terms of matrices.
For a flat $A$-algebra $R$, $\underline{Q}(R)$ is
the set of symmetric matrices $$\begin{pmatrix}2^{max\{i,j\}}f_{i,j}\end{pmatrix}$$ of size $n\times n$ satisfying the following:
 \begin{enumerate}
\item[(1)] The size of $f_{i,j}$ is $n_i\times n_j$.
\item[(2)] If $L_i$ is \textit{of type} $\textit{I}^o$ with respect to $q$, then $f_{i,i}$ is of the form $$\begin{pmatrix} a_i&2b_i\\ 2\cdot{}^tb_i&\epsilon +4c_i \end{pmatrix}.$$
 Here, the diagonal entries of $a_i$ are $\equiv 0  \ \ \mathrm{mod}\ \ 2$, where $a_i$ is an $(n_i-1) \times (n_i-1)$-matrix, etc.
\item[(3)] If $L_i$ is \textit{of type} $\textit{I}^e$, then $f_{i,i}$ is of the form
   $$\begin{pmatrix} d_i&{}^tb_i&2e_i\\ b_i&1+2a_i&1+2c_i \\ 2\cdot{}^te_i&1+2c_i&2\gamma_i+4f_i \end{pmatrix}.$$
Here,  the diagonal entries of $d_i$ are $\equiv 0  \ \ \mathrm{mod}\ \ 2$, where $d_i$ is an $(n_i-2) \times (n_i-2)$-matrix, etc.
\item[(4)]  If $L_i$ is \textit{of type} $\textit{II}$, then  the diagonal entries of $f_{i,i}$ are $\equiv 0  \ \ \mathrm{mod}\ \ 2$.
\item[(5)]   If $L_i$ and $L_{i+1}$ are \textit{of type} $\textit{I}$, then the $(n_i, n_{i+1})^{th}$-entry of $f_{i, i+1}$ lies in the ideal (2).
 \end{enumerate}

  It is easy to see, by Lemma 3.1, that $\underline{Q}$ is represented by a flat $A$-scheme which is isomorphic to an affine space of dimension $(n^2+n)/2$.
      Note that our fixed quadratic form $q$ is an element of $\underline{Q}(A)$.

\subsection{Smooth affine group scheme $\underline{G}$}

\begin{Thm}
    For any flat $A$-algebra $R$, the group $\underline{M}^{\ast}(R)$ acts on the right of $\underline{Q}(R)$
    by $f\circ m = {}^tm\cdot f\cdot m$. Then this action is represented by an action morphism of schemes
     \[\underline{Q} \times \underline{M}^{\ast} \longrightarrow \underline{Q} .\]
\end{Thm}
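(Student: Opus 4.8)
The plan is to prove three things in turn: that the formula $f \circ m = {}^t m\cdot f\cdot m$ produces an element of $\underline{Q}(R)$ whenever $f\in\underline{Q}(R)$ and $m\in\underline{M}^{\ast}(R)$ (closure of the operation), that this defines a right action of the group $\underline{M}^{\ast}(R)$ on the set $\underline{Q}(R)$, and finally that the assignment is induced by a morphism of $A$-schemes. At the outset I would record the two transport identities that drive everything: writing $f^{m}:=f\circ m$, one has $f^{m}(v)=f(mv)$ and $\langle v,w\rangle_{f^{m}}=\langle mv,mw\rangle_{f}$ for all $v,w\in L\otimes_{A}R$. By the very definition of $\underline{M}^{\ast}$, both $m$ and $m^{-1}$ lie in $\underline{M}(R)$, so both stabilize each of $A_i,B_i,W_i,X_i,Y_i,Z_i$ together with the duals $W_i^{\perp}$ and $(X_i\cap Z_i)^{\perp}$, and $m$ induces the identity on $A_i/B_i$, on $W_i/(X_i\cap Z_i)$, and on $(X_i\cap Z_i)^{\perp}/W_i^{\perp}$.

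The first and principal step is the verification of conditions (a)--(j) for $f^{m}$, which I would organize into two families. Conditions (a), (b), (c), (e), (f) are \emph{boundedness} conditions: each asserts that $\langle-,-\rangle_{f}$ or $\tfrac{1}{2^i}f$ takes values in a prescribed ideal, or vanishes modulo $2$, on a specified fixed lattice or quotient. Since $m$ and $m^{-1}$ carry each fixed lattice onto itself, the transport identities rewrite a value $\langle mv,mw\rangle_{f}$ or $f(mv)$ as a value of $f$ on the \emph{same} lattice, so these conditions pass to $f^{m}$ verbatim. Conditions (d), (g), (h), (i), (j) are \emph{comparison} conditions: they pin down $f$ relative to the fixed form $q$ (or relative to the distinguished vector $e$) modulo a power of $2$ on a quotient such as $A_i/2A_i$ or $W_i/(X_i\cap Z_i)$. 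Here the decisive point is that $m$ induces the \emph{identity} on exactly those quotients: writing $mv=v+(\text{error})$ with the error lying in the sublattice being modded out, one expands $f(mv)$ or $\langle mv,mw\rangle_{f}$ by bilinearity and uses the boundedness conditions (a)--(c), (e)--(f) to absorb the cross terms into the relevant power of $2$. Thus $f^{m}\equiv f\equiv q$ on the quotient in question, and the comparison condition survives.

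The second step, that this is a right action, is the formal identity $f\circ(m_1 m_2)={}^t(m_1 m_2)\,f\,(m_1 m_2)={}^tm_2\bigl({}^tm_1\,f\,m_1\bigr)m_2=(f\circ m_1)\circ m_2$, together with $f\circ 1=f$, both immediate from associativity of matrix multiplication. For representability I would use that $\underline{M}^{\ast}$ and $\underline{Q}$ are represented by flat affine $A$-schemes (Sections 3.2--3.3), so the product $\underline{Q}\times\underline{M}^{\ast}=\mathrm{Spec}\,B$ with $B$ a flat $A$-algebra. Its universal point is a pair $(f_{\mathrm{univ}},m_{\mathrm{univ}})$ with $f_{\mathrm{univ}}\in\underline{Q}(B)$ and $m_{\mathrm{univ}}\in\underline{M}^{\ast}(B)$; applying the first step to the flat algebra $B$ shows the symmetric matrix ${}^tm_{\mathrm{univ}}\,f_{\mathrm{univ}}\,m_{\mathrm{univ}}$ lies in $\underline{Q}(B)$, i.e. corresponds to a morphism $\underline{Q}\times\underline{M}^{\ast}\to\underline{Q}$. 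By Yoneda this morphism restricts on $R$-points to $(f,m)\mapsto f\circ m$ for every flat $R$, which is precisely the asserted action morphism.

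The hard part is the first step, and within it the comparison conditions (d), (g), (h), (i), (j): unlike the boundedness conditions these are not preserved by an arbitrary lattice-stabilizing endomorphism, and one must invoke exactly the ``induces the identity'' clauses in the definition of $\underline{M}^{\ast}$, feeding the boundedness conditions back in to kill the error terms. All of this bookkeeping is pinned down by the block-matrix descriptions of Sections 3.1 and 3.3, so in practice I would carry out the verification blockwise, matching each comparison condition to the quotient on which the corresponding block of $m$ is unipotent.
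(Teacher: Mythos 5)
Your proposal is correct and follows essentially the same route as the paper: one verifies conditions a)--j) of Section 3.3 for ${}^t m\cdot f\cdot m$, with the lattice-stabilization clauses of $\underline{M}^{\ast}$ handling the containment conditions and the ``induces the identity on $A_i/B_i$, $W_i/(X_i\cap Z_i)$, $(X_i\cap Z_i)^{\perp}/W_i^{\perp}$'' clauses handling the conditions that compare $f$ to $q$ (the paper carries out the bilinear expansion $ma_i=a_i+b_i$, $mw_i=w_i+x_i$ explicitly for conditions i) and j), exactly as your recipe prescribes). Your additional remarks on the action axioms and on representability via flatness and the universal point are correct and merely make explicit what the paper leaves implicit.
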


\begin{proof}
 We start with any $m\in \underline{M}^{\ast}(R)$ and $f\in \underline{Q}(R)$.
It suffices to show that $f\circ m$ satisfies  conditions a) to j) given in Section 3.3.\\
From the construction of $\underline{M}^{\ast}$, it is obvious that $f\circ m$ satisfies conditions a) to f).\\
Condition g) is obvious from the fact that $m$  stabilizes $A_i$ and $B_i$ and induces the identity on $A_i/B_i$.\\
The fact that $m$ induces the identity on $W_i/X_i$ and $W_i/Z_i$ implies that $f\circ m$ satisfies  condition h).\\
For  condition i), it suffices to show that $\langle ma_i, mw_i  \rangle_{f,i} \equiv \langle a_i, w_i  \rangle_{q,i}$ mod 2.
We denote $ma_i=a_i+b_i$ and $mw_i=w_i+x_i$, where $b_i\in B_i\otimes_A R, x_i \in X_i\otimes_A R$.
Now it suffices to show $\langle a_i+b_i, x_i  \rangle_{f,i} +\langle b_i, w_i  \rangle_{f,i} \equiv 0 \mathrm{~mod~}2$.
 Firstly, $\langle a_i+b_i, x_i  \rangle_{f,i} \equiv 0 \mathrm{~mod~}2$ by the definition of the lattice $X_i$.
Secondly, we have $\langle b_i, w_i  \rangle_{f,i}\mathrm{~mod~}2 \equiv 0$ by observing that
$\langle b_i, e  \rangle_{f,i}^2  \equiv \langle b_i, b_i  \rangle_{f,i} \equiv 0 \mathrm{~mod~}2$ when $B_i\varsubsetneq A_i$,
where $e$ is the unique vector chosen earlier. If $B_i=A_i$,  it is obvious because $W_i=X_i$.\\
For  condition j), it suffices to prove that $\langle mw_i, mz_i^{\prime}\rangle_{f}-\langle w_i, z_i^{\prime}\rangle_{q} \in R$.
Notice that $mw_i=w_i+z_i$ and $mz_i^{\prime}=z_i^{\prime}+w_i^{\prime}$, where $z_i\in (X_i\cap Z_i)\otimes_AR$ and $w_i^{\prime}\in W_i^{\perp}$.
Thus, this can be easily seen by the fact that
$\langle w_i, w_i^{\prime}\rangle_{f}+\langle z_i, z_i^{\prime}\rangle_{f}+\langle z_i, w_i^{\prime}\rangle_{f}\in R$.
\end{proof}

     \begin{Thm}
     Let $\rho$ be the morphism $\underline{M}^{\ast} \rightarrow \underline{Q}$ defined by $\rho(m)=q \circ m$.
     Then $\rho$ is smooth of relative dimension dim $\mathrm{O}(V,q)$.
     \end{Thm}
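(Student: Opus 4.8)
The plan is to deduce the smoothness of $\rho$ from a fibrewise analysis of its differential, exploiting that both source and target are already smooth over $A$. Recall that $\underline{M}^{\ast}$ is smooth over $A$ of relative dimension $n^2$ with generic fibre $\mathrm{GL}_F(V)$, while $\underline{Q}$ is an affine space, hence smooth over $A$ of relative dimension $(n^2+n)/2$. Since $\underline{M}^{\ast}\to\mathrm{Spec}\,A$ and $\underline{Q}\to\mathrm{Spec}\,A$ are both flat, the fibrewise criterion for flatness (EGA IV, 11.3.10) together with the fibrewise criterion for smoothness reduces the whole statement to proving that the two fibres $\rho_F$ and $\rho_\kappa$ are smooth; the relative dimension will then come out for free, since $n^2-(n^2+n)/2=(n^2-n)/2=\dim\mathrm{O}(V,q)$ by rank--nullity. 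Over a field $k=F$ or $k=\kappa$, the map $\rho_k$ is a morphism of smooth $k$-schemes, so by the Jacobian criterion it is smooth precisely when its differential is surjective at every point of $\underline{M}^{\ast}_k$.

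The key simplification is equivariance. With the action $f\circ u={}^tu\cdot f\cdot u$ defined above, one checks $\rho(mu)=\rho(m)\circ u$, where right translation $r_u$ is an automorphism of $\underline{M}^{\ast}$ and $a_u\colon f\mapsto f\circ u$ is an automorphism of $\underline{Q}$. Differentiating $\rho\circ r_u=a_u\circ\rho$ at the identity, and using $\rho(1)=q$, gives $d\rho_u=(da_u)_q\circ d\rho_1\circ (dr_u)_1^{-1}$, so $d\rho_u$ is surjective if and only if $d\rho_1$ is. As every geometric point of $\underline{M}^{\ast}_k$ is a right translate of $1$, surjectivity of the differential everywhere reduces to surjectivity of $d\rho_1$ alone, over $\bar F$ and over $\bar\kappa$, equivalently over $F$ and over $\kappa$. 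A direct expansion $\rho(1+\varepsilon\delta)=q+\varepsilon({}^t\delta\cdot q+q\cdot\delta)+O(\varepsilon^2)$ yields $d\rho_1(\delta)={}^t\delta\cdot q+q\cdot\delta$, where $\delta$ ranges over $\mathrm{Lie}(\underline{M}^{\ast})$; by the identification $\underline{M}=1+\underline{M}^{\prime}$ this tangent space is exactly the space of matrices $\delta=\bigl(2^{\max\{0,j-i\}}\delta_{i,j}\bigr)$ subject to the linear conditions defining $\underline{M}^{\prime}$.

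Over the generic fibre this is immediate: $q$ is invertible and $2$ is a unit over $F$, so for any symmetric direction $S$ the element $\delta=\tfrac12 q^{-1}S$ satisfies ${}^t\delta\cdot q+q\cdot\delta=S$, whence $d\rho_1$ surjects onto the full symmetric space $T_q(\underline{Q}\otimes F)$ and $\rho_F$ is smooth of relative dimension $\dim\mathrm{O}(V,q)$. The essential work, and the main obstacle, is the special fibre. Here I would compute $\delta\mapsto{}^t\delta\cdot q+q\cdot\delta$ block by block over $\kappa$, inserting the explicit shape of $q$ from the structural Theorem 2.4 and imposing the block constraints on $\delta\in\mathrm{Lie}(\underline{M}^{\ast}\otimes\kappa)$, and verify that the image exhausts the tangent space $T_q(\underline{Q}\otimes\kappa)$ cut out by the matrix conditions (1)--(5) for $\underline{Q}$. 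This is where all the characteristic-$2$ subtleties concentrate: the ``diagonal entries $\equiv 0$ mod $2$'' constraints, the dependence on the types $\mathrm{I}^o,\mathrm{I}^e,\mathrm{II}$ and on whether a layer is \textit{bound} or \textit{free}, and the role of the distinguished vector $e$, all of which must be tracked so that each admissible direction in $\underline{Q}$ is realized. The definitions of $\underline{M}^{\ast}$ and $\underline{Q}$ were arranged precisely so that this surjectivity holds. Once it is established over $\kappa$, the morphism $\rho_\kappa$ is smooth, its relative dimension is $n^2-(n^2+n)/2=(n^2-n)/2=\dim\mathrm{O}(V,q)$, and combining the two fibres through the flatness and smoothness criteria of the first paragraph shows that $\rho$ is smooth over $A$ of relative dimension $\dim\mathrm{O}(V,q)$, as claimed.
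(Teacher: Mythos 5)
Your soft reductions are all fine and essentially track the paper's: the fibrewise criterion plays the role of the citation of Theorem 5.5 of \cite{GY}, and your equivariance trick $\rho(mu)=\rho(m)\circ u$ is legitimate, since Theorem 3.4 makes $a_u\colon f\mapsto {}^t u\cdot f\cdot u$ an automorphism of $\underline{Q}_{\bar\kappa}$ with inverse $a_{u^{-1}}$; it is a clean alternative to the paper's device of showing that $Y\mapsto {}^t m\cdot Y$ preserves an auxiliary space $T_3$, and both serve only to move the base point from a general $m$ to the identity. But the proposal stops exactly where the theorem begins. The sentence ``Here I would compute $\delta\mapsto {}^t\delta\cdot q+q\cdot\delta$ block by block \dots and verify that the image exhausts the tangent space'' is not a proof of surjectivity of $d\rho_1$ over $\bar\kappa$; it is a restatement of what must be proved, and the closing assertion that the definitions of $\underline{M}^{\ast}$ and $\underline{Q}$ ``were arranged precisely so that this surjectivity holds'' is circular. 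In residue characteristic $2$ this surjectivity is the entire content of the theorem: the generic-fibre argument via $\delta=\tfrac12 q^{-1}S$ dies modulo $2$ because of the factor $\tfrac12$, so one must actually check that every admissible symmetric direction (conditions (1)--(5) defining $\underline{Q}$) is hit by some $\delta$ satisfying the block constraints defining $\underline{M}^{\prime}$.

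The paper's mechanism for this is the auxiliary functor $T_3$ of matrices $\bigl(2^{\max(i,j)}y_{i,j}\bigr)$ with prescribed block shapes, together with the factorization of the differential as $X\mapsto qX$ (a bijection $T_1\to T_3$), then $Y\mapsto {}^t m\cdot Y$ (a bijection of $T_3$), then $Y\mapsto {}^tY+Y$ (a surjection $T_3\to T_2$, essentially by construction of $T_3$). The bijectivity of $X\mapsto qX$ is itself proved not by a direct computation over $\kappa$ but by a flatness argument: one checks that $X\mapsto qX$ and $Y\mapsto q^{-1}Y$ are well defined on flat $A$-algebras, using the block-diagonal normal form of $q$ from Theorem 2.4. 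Some substitute for this construction, or an explicit block-by-block verification covering the types $\textit{I}^o$, $\textit{I}^e$, $\textit{II}$ and the adjacency constraints, is required; as written, your argument establishes smoothness of the generic fibre only.
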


\begin{proof}
   The theorem follows from Theorem 5.5 of \cite{GY} and the following lemma.
\end{proof}

      \begin{Lem}
      The morphism $\rho \otimes \kappa : \underline{M}^{\ast}\otimes \kappa \rightarrow \underline{Q}\otimes \kappa$
      is smooth of relative dimension dim $\mathrm{O}(V,q)$.
      \end{Lem}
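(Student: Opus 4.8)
The plan is to prove smoothness of $\rho \otimes \kappa$ by verifying surjectivity on tangent spaces fiberwise over the special fiber, using the explicit matrix descriptions of $\underline{M}^{\ast}$ (Section 3.1) and $\underline{Q}$ (Section 3.3). First I would recall that smoothness of a morphism between smooth $\kappa$-schemes can be checked by the Jacobian criterion: it suffices to show that for every point $m$ of $\underline{M}^{\ast}\otimes\kappa$, the induced map on tangent spaces $d\rho_m : T_m(\underline{M}^{\ast}\otimes\kappa) \to T_{q\circ m}(\underline{Q}\otimes\kappa)$ is surjective, and that the relative dimension equals $\dim \mathrm{O}(V,q)$. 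Since the action is $f \circ m = {}^t m \cdot f \cdot m$ and $\rho(m) = q \circ m$, it is enough by the group action to analyze the differential at the identity $m = 1$, because translation by elements of the group $\underline{M}^{\ast}\otimes\kappa$ identifies tangent spaces at different points with the tangent space at the identity.

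The key computation is the differential at the identity. Writing $m = 1 + \epsilon X$ with $\epsilon^2 = 0$ and $X \in \mathrm{Lie}(\underline{M}^{\ast}\otimes\kappa) = \underline{M}'\otimes\kappa$ (using the description of $\underline{M} = 1 + \underline{M}'$ from Remark 3.3), one computes
\[
q \circ (1 + \epsilon X) = q + \epsilon\left({}^t X \cdot q + q \cdot X\right),
\]
so the differential $d\rho_1$ sends $X$ to the symmetric form ${}^t X \cdot q + q \cdot X$, viewed as a tangent vector to $\underline{Q}\otimes\kappa$ at $q$. I would then verify, block by block according to the type ($I^o$, $I^e$, $II$) of each Jordan component $L_i$ and to the interactions between adjacent components, that the image of this map is exactly the tangent space to $\underline{Q}\otimes\kappa$ at $q$. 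Concretely, this amounts to checking that every admissible infinitesimal deformation of the quadratic form $q$ permitted by conditions (1)--(5) of the matrix description of $\underline{Q}$ arises as ${}^t X \cdot q + q \cdot X$ for some $X$ satisfying the constraints of $\underline{M}'\otimes\kappa$, i.e. the entry-wise congruence and ideal conditions of (2)--(5) in Section 3.1. The relative dimension is then read off as $\dim \underline{M}^{\ast} - \dim(\ker d\rho_1) = n^2 - \dim(\ker d\rho_1)$, and one checks this equals $\dim\mathrm{O}(V,q)$; equivalently the kernel is the Lie algebra of the orthogonal group, which has the expected dimension.

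The main obstacle I expect is precisely the block-by-block surjectivity verification in residue characteristic $2$, where the quadratic form and the bilinear form decouple: the diagonal entries of $q$ carry genuinely quadratic information (the additive polynomial $\frac{1}{2^i}q \bmod 2$) that is \emph{not} recoverable from the bilinear form alone, and the constraints defining $\underline{M}^{\ast}$ involve subtle congruences mod $2$ and mod $4$ (conditions h), i), j) of $\underline{Q}$ and the corresponding entry constraints in (5) of Section 3.1). The delicate point is the free type $I^e$ case and the distinction between types $I^e_1$ and $I^e_2$, together with the bound components, where the off-diagonal coupling between adjacent Jordan blocks contributes extra constraints; here one must carefully match the dimension counts so that neither too many nor too few deformations are hit. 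I would organize the verification by first handling the generic (free, type $II$) blocks where the computation parallels the $p \neq 2$ case of \cite{GY}, then treating the type $I^o$ and $I^e$ diagonal blocks, and finally the adjacent-block coupling terms, checking surjectivity onto each of the defining matrix conditions of $\underline{Q}$ in turn. Once surjectivity is established at the identity and the relative dimension is confirmed to be $\dim\mathrm{O}(V,q)$, the smoothness over all of $\underline{M}^{\ast}\otimes\kappa$ follows by homogeneity under the group action.
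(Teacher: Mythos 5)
Your overall strategy --- verifying surjectivity of the differential on Zariski tangent spaces (the paper invokes \cite{H}, III.10.4 for exactly this) and then reading off the relative dimension --- is the same as the paper's, but the two proofs organize the computation differently. You reduce to the identity via the equivariance $\rho(mm_0)=\rho(m)\circ m_0$, which is legitimate since $f\mapsto f\circ m_0$ is an automorphism of $\underline{Q}\otimes\bar{\kappa}$ for $m_0\in\underline{M}^{\ast}(\bar{\kappa})$; the paper instead works at an arbitrary point $m$ and tames the $m$-dependence of the differential $X\mapsto {}^tm\cdot q\cdot X+{}^tX\cdot q\cdot m$ by factoring it through an auxiliary functor $T_3$ of block matrices $\begin{pmatrix}2^{\max(i,j)}y_{i,j}\end{pmatrix}$ with prescribed shapes: one shows $X\mapsto q\cdot X$ is a bijection $T_1\to T_3$, that ${}^tm\cdot(-)$ preserves $T_3$, and that $Y\mapsto {}^tY+Y$ surjects $T_3$ onto $T_2$. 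I would urge you to adopt that factorization even in your identity-based version, because it is precisely what makes your anticipated ``block-by-block verification in residue characteristic $2$'' tractable: the surjectivity of $Y\mapsto {}^tY+Y$ is immediate from the definition of $T_3$, and all of the delicate mod-$2$ bookkeeping is concentrated in the single, checkable claim that multiplication by the block-diagonal matrix $q=\begin{pmatrix}2^i\delta_i\end{pmatrix}$ carries the constraints defining $\underline{M}^{\prime}$ exactly onto those defining $T_3$ (done separately for the $(i,i)$, $(i,i+1)$ and $(i+1,i)$ blocks). Two small corrections to your closing step: the relative dimension equals $\dim\ker d\rho_1=\dim\underline{M}^{\ast}-\dim\underline{Q}=n^2-(n^2+n)/2=n(n-1)/2$, not $\dim\underline{M}^{\ast}-\dim\ker d\rho_1$; and you should not argue via ``the Lie algebra of the orthogonal group,'' which for the naive (non-smooth) model in characteristic $2$ has the wrong dimension --- the count $n(n-1)/2$ is an automatic consequence of the surjectivity you have just established together with the known dimensions of the two affine spaces, not an independent input.
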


    \begin{proof}
    The proof is based on Lemma 5.5.2 in \cite{GY}.
    It is enough to check the statement over the algebraic closure $\bar{\kappa}$ of $\kappa$.
    By \cite{H}, III.10.4, it suffices to show that, for any $m \in \underline{M}^{\ast}(\bar{\kappa})$,
    the induced map on the Zariski tangent space $\rho_{\ast, m}:T_m \rightarrow T_{\rho(m)}$ is surjective.

   We define two functors from the category of commutative flat $A$-algebras to the category of abelian groups as follows:
     \[T_1(R)=\{m-1 : m\in\underline{M}(R)\},\]
     \[T_2(R)=\{f-q : f\in\underline{Q}(R)\}.\]

   The functor $T_1$ (resp. $T_2$) is representable by a flat $A$-algebra which is a polynomial ring over $A$ in $n^2$ (resp. $(n^2+n)/2$) variables.
    Moreover, they have the structure of a commutative group scheme since they are closed under addition.
    In fact, $T_1$ is the same as the functor $\underline{M}^{\prime}$ in Remark 3.3.

   We still need to introduce another functor on flat $A$-algebras.
   Define $T_3(R)$ to be the set of all $(n \times n)$-matrices $y$ over $R$ with the following conditions:
    \begin{enumerate}
  \item[a)] The $(i,j)$-block $y_{i,j}$ of $y$ has entries in $2^{max(i,j)}R$ so that
   $$y=\begin{pmatrix} 2^{max(i,j)}y_{i,j}\end{pmatrix}.$$
   Here, the size of $y_{i,j}$ is $n_i\times n_j$.
   \item[b)] If $L_i$ is \textit{of type} $\textit{I}^o$, $y_{i,i}$ is of the form
   \[\begin{pmatrix} s_i&2y_i\\ 2v_i&2z_i \end{pmatrix}\in \mathrm{M}_{n_i}(R),\]
     where $s_i$ is an $(n_i-1) \times (n_i-1)$-matrix, etc.
 \item[c)] If $L_i$ is \textit{of type} $\textit{I}^e$, $y_{i,i}$ is of the form
   \[\begin{pmatrix} s_i&r_i&2t_i\\ y_i&x_i&2z_i\\ 2v_i&2u_i&2w_i \end{pmatrix}\in \mathrm{M}_{n_i}(R),\]
     where $s_i$ is an $(n_i-2) \times (n_i-2)$-matrix, etc.
    \item[d)] If $L_i$ and $L_{i+1}$ are \textit{of type} $\textit{I}$,
     then both the $(n_i, n_{i+1})^{th}$- entry of $y_{i, i+1}$ and the $(n_{i+1}, n_i)^{th}$- entry of $y_{i+1, i}$ lie in the ideal (2).
      \end{enumerate}
It is easy to see that the functor $T_3$ is represented by a flat $A$-scheme.

   Then we identify $T_m$ with $T_1(\bar{\kappa})$ and $T_{\rho(m)}$ with $T_2(\bar{\kappa})$.
   The map $\rho_{\ast, m}:T_m \rightarrow T_{\rho(m)}$ is then $X \mapsto m^t\cdot q\cdot X + X^t\cdot q\cdot m$, where the sum and the multiplication are to be interpreted as in Section 5.3 of \cite{GY}.

   To prove surjectivity, it suffices to show the following three statements:
       \begin{itemize}
   \item[(1)] $X \mapsto q\cdot X $ is a bijection $T_1(\bar{\kappa}) \rightarrow T_3(\bar{\kappa})$;
   \item[(2)] for any $m \in \underline{M}^{\ast}(\bar{\kappa})$, $Y \mapsto {}^t m \cdot Y$ is a bijection from $T_3(\bar{\kappa})$ to itself;
   \item[(3)] $Y \mapsto {}^t Y + Y$ is a surjection $T_3(\bar{\kappa}) \rightarrow T_2(\bar{\kappa})$.
    \end{itemize}

   (3) is direct from the construction of $T_3(\bar{\kappa})$. Hence we provide the proof of (1) and (2).

For (1), we first observe that two functors $T_1$ and $T_3$ are representable by flat affine schemes.
Therefore, it suffices to show that the map
$$T_1(R)\longrightarrow T_3(R), X\mapsto q\cdot X$$
is bijective for a flat $A$-algebra $R$. To prove this, it suffices to show that
the map $T_1(R) \rightarrow T_3(R), X \mapsto q\cdot X $
   and the map $T_3(R) \rightarrow T_1(R), Y \mapsto q^{-1}\cdot Y $ are  well-defined for all flat $A$-algebra $R$.

    For the first map, it suffices to show that $q\cdot X$ satisfies the four conditions defining the functor $T_3$.
     We represent the given quadratic form $q$ by a symmetric matrix $\begin{pmatrix} 2^{i}\cdot \delta_i\end{pmatrix}$ with $2^{i}\cdot \delta_i$
     for the $(i,i)$-block and $0$ for remaining blocks.
  We express $$X=\begin{pmatrix} 2^{max\{0,j-i\}}x_{i,j} \end{pmatrix}.$$
  Then
   $$q\cdot X = \begin{pmatrix} 2^{max(i,j)}y_{i,j}\end{pmatrix}.$$
  Here, $y_{i,i}=\delta_i\cdot x_{i,i}$, $y_{i, i+1}= \delta_i\cdot x_{i,i+1}$ and $y_{i+1, i}= \delta_{i+1}\cdot x_{i+1, i}$.
  These matrix equations are easily computed and so we conclude $q\cdot X \in T_3(R)$.

   For the second map, we express $Y=\begin{pmatrix} 2^{max(i,j)}y_{i,j}\end{pmatrix}$ and $q^{-1}=\begin{pmatrix} 2^{-i}\cdot \delta_i^{-1}\end{pmatrix}$.
   Then we have the following:
   $$q^{-1}\cdot Y = \begin{pmatrix} 2^{max\{0,j-i\}}x_{i,j} \end{pmatrix}.$$
  Here, $x_{i,i}=\delta_i^{-1}\cdot y_{i,i}$,
   $ x_{i, i+1}=\delta_i^{-1}\cdot y_{i, i+1}$ and
  $x_{i+1, i}=\delta_{i+1}^{-1}\cdot y_{i+1, i}$.
   From these, it is easily checked that $q^{-1}\cdot Y$ is an element of $T_1(R)$.

   For (2), it suffices to show that the map
   $$T_3(\bar{\kappa}) \rightarrow T_3(\bar{\kappa}), Y \mapsto {}^t m \cdot Y,  \mathrm{~for~any~} m \in \underline{M}^{\ast}(\bar{\kappa}),$$
   is well-defined so that its inverse map $Y \mapsto {}^t m^{-1} \cdot Y$ is  well-defined as well.

   We again express $m=\begin{pmatrix} 2^{max\{0,j-i\}}m_{i,j} \end{pmatrix}$ and $Y=\begin{pmatrix} 2^{max(i,j)}y_{i,j}\end{pmatrix}$.
   We need to show that ${}^t m \cdot Y $ satisfies four conditions defining the functor $T_3$.

   Condition a) is  explained in Lemma 5.5.2 of \cite{GY}.
   For the second and the third,  we observe that the diagonal $(i,i)$-block of ${}^t m \cdot Y $ is
   $$2^i\cdot {}^t m_{i,i}\cdot y_{i,i}+\sum_{j\neq i}2^{max\{0,i-j\}+max\{j,i\}}\cdot {}^t m_{j,i}\cdot y_{i,j}.$$
   Notice that $max\{0,i-j\}+max\{j,i\}$ is greater than $i$ if $j\neq i$.
   By observing the above  equation, it is easily seen that ${}^t m \cdot Y $ satisfies  conditions b) and c).

   Finally we observe that the $(i,i+1)$-block of ${}^t m \cdot Y $ is
   $$2^{i+1}\cdot {}^t m_{i,i}\cdot y_{i, i+1}+2^{i+1}\cdot {}^t m_{i, i+1}\cdot y_{i+1, i+1}+ 2^{i+2}\cdot \widetilde{y_{i, i+1}}$$
   for some $\widetilde{y_{i, i+1}}$ and
   the $(i+1,i)$-block of ${}^t m \cdot Y $ is
   $$2^{i+1}\cdot {}^t m_{i+1, i}\cdot y_{i, i}+2^{i+1}\cdot {}^t m_{i+1, i+1}\cdot y_{i+1, i} + 2^{i+2}\cdot \widetilde{y_{i+1, i}}$$
   for some $\widetilde{y_{i+1, i}}$.
   From these, one can easily check that ${}^t m \cdot Y $ satisfies  condition d).
   \end{proof}

 Let $\underline{G}$ be the stabilizer of $q$ in $\underline{M}^{\ast}$. It is an affine group subscheme of $\underline{M}^{\ast}$, defined over $A$.
  Thus, we have the following theorem.
 \begin{Thm}
 The group scheme $\underline{G}$ is smooth, and $\underline{G}(R)=\mathrm{Aut}_R(L\otimes_A R,q\otimes_A R)$ for any \'{e}tale $A$-algebra $R$.
 \end{Thm}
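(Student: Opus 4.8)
The plan is to prove the two assertions separately: smoothness will follow formally from the smoothness of $\rho$ established above, and the functor-of-points identity will be obtained by unwinding the definition of the stabilizer and matching $A^{sh}$-points against $\bar{K}$, after which the uniqueness in Proposition 3.7 of \cite{GY} upgrades the identity to all \'etale $A$-algebras.

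\emph{Smoothness.} By construction $\underline{G}$ is the stabilizer of the $A$-point $q \in \underline{Q}(A)$ for the right action $f \circ m = {}^t m \cdot f \cdot m$; equivalently it is the scheme-theoretic fibre $\rho^{-1}(q)$ of the morphism $\rho \colon \underline{M}^{\ast} \to \underline{Q}$, $\rho(m) = q \circ m$. In other words $\underline{G} = \underline{M}^{\ast} \times_{\underline{Q}, q} \mathrm{Spec}\, A$ is the base change of $\rho$ along the section $q \colon \mathrm{Spec}\, A \to \underline{Q}$. Since $\rho$ is smooth of relative dimension $\dim \mathrm{O}(V,q)$ and smoothness is stable under base change, $\underline{G} \to \mathrm{Spec}\, A$ is smooth (of relative dimension $\dim \mathrm{O}(V,q)$, in agreement with the generic fibre being $\mathrm{O}(V,q)$, the stabilizer of $q$ in the generic fibre $\mathrm{GL}_F(V)$ of $\underline{M}^{\ast}$).

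\emph{The functor of points: forward inclusion and reduction.} Unwinding the definition gives, for every $A$-algebra $R$,
\[
\underline{G}(R) = \{\, m \in \underline{M}^{\ast}(R) : {}^t m \cdot q \cdot m = q \,\}.
\]
As $\underline{M}^{\ast}(R) \subseteq \mathrm{Aut}_R(L \otimes_A R)$ (its elements are invertible endomorphisms), any such $m$ is an $R$-linear automorphism preserving $q$, whence $\underline{G}(R) \subseteq \mathrm{Aut}_R(L \otimes_A R, q \otimes_A R)$ for \emph{every} $R$. For the reverse inclusion it therefore suffices to show that every isometry of $(L \otimes_A R, q \otimes_A R)$ lies in $\underline{M}^{\ast}(R)$: indeed, such an $m$ then preserves $q$ and hence lies in $\underline{G}(R)$. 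Since $m^{-1}$ is again an isometry, and $\underline{M}^{\ast}(R) = \{ m \in \underline{M}(R) : m^{-1} \in \underline{M}(R) \}$, the task reduces to proving that every isometry satisfies conditions (1) and (2) defining $\underline{M}$.

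\emph{The crux.} This last statement over $A^{sh}$ is exactly the content of the observations of Section 3.1: there every element of $\bar{K} = \mathrm{Aut}_{A^{sh}}(L \otimes_A A^{sh}, q)$ was shown to stabilize $A_i, B_i, W_i, X_i, Y_i, Z_i$ and to induce the identity on $A_i/B_i$, on $W_i/(X_i \cap Z_i)$, and on $(X_i \cap Z_i)^{\perp}/W_i^{\perp}$ -- precisely conditions (1) and (2). Consequently $\bar{K} \subseteq \underline{M}^{\ast}(A^{sh})$, and combined with the forward inclusion this yields the equality $\underline{G}(A^{sh}) = \bar{K}$. Thus $\underline{G}$ is a smooth affine model of $\mathrm{O}(V,q)$ whose $A^{sh}$-points coincide with those of the naive model $\underline{G}^{\prime}$; by the uniqueness statement of Proposition 3.7 in \cite{GY}, $\underline{G}$ is the canonical smooth model, and therefore $\underline{G}(R) = \underline{G}^{\prime}(R) = \mathrm{Aut}_R(L \otimes_A R, q \otimes_A R)$ for every \'etale $A$-algebra $R$. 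The main obstacle is the reverse inclusion over $A^{sh}$, i.e. confirming that the derivation of Section 3.1 is exhaustive enough to force \emph{every} isometry into $\underline{M}^{\ast}$; this rests on the fact that the lattices $A_i, \dots, Z_i$ and the three quotient maps are defined intrinsically from the forms and commute with the flat base change $- \otimes_A A^{sh}$, so that the stabilization statements are automatic, while the ``induces the identity'' statements come from the one-dimensionality of the relevant quotients together with the explicit congruence computations of Section 3.1 (such as $\tfrac{1}{2}\cdot\tfrac{1}{2^i} q(gw - w) \equiv 0 \bmod 2$). Alternatively, one may bypass \cite{GY} Proposition 3.7 and argue directly for an arbitrary \'etale $R$: conditions (1) and (2) are closed conditions on the matrix entries of $m$ (as made explicit in Section 3.1(6)) and may be verified after the faithfully flat base change from $R$ to the product of the strict henselizations of $R$ at its points, where they reduce to the Section 3.1 computation over a strict henselization at points of the special fibre and are vacuous over the generic fibre, on which $2$ is invertible.
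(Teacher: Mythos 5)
Your proposal is correct and follows essentially the same route the paper intends: the paper states the theorem as an immediate consequence of Theorem 3.6 (smoothness of $\rho$), with $\underline{G}=\rho^{-1}(q)$ smooth by base change, and with the identification of points over \'etale algebras resting on the Section 3.1 analysis of $\bar{K}$ together with the uniqueness statement of Proposition 3.7 in \cite{GY}. Your write-up merely makes explicit the steps the paper leaves implicit.
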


\section{The special fiber}

In this section, we will determine the structure of the special fiber $\tilde{G}$ of $\underline{G}$ by observing the maximal reductive quotient and the component group. From this section to the end, the identity matrix is denoted by id.

    \subsection{The reductive quotient of the special fiber}

    Recall that $Z_i$ is the sublattice of $B_i$ such that $Z_i/2 A_i$ is the kernel of the quadratic form $\frac{1}{2^{i+1}}q$ mod 2 on $B_i/2 A_i$.
Let $\bar{V_i}=B_i/Z_i$ and $\bar{q}_i$ denote the nonsingular quadratic form $\frac{1}{2^{i+1}}q$ mod 2 on $\bar{V_i}$.
It is  obvious that each element of $\underline{G}(R)$ fixes $\bar{q}_i$ for every flat $A$-algebra $R$.
Based on this, we claim to have a morphism of algebraic groups
$$\varphi_i : \tilde{G} \rightarrow \mathrm{O}(\bar{V_i}, \bar{q_i})^{\mathrm{red}}$$
defined over $\kappa$, where $\mathrm{O}(\bar{V_i}, \bar{q_i})^{\mathrm{red}}$ is the reduced subgroup scheme of $\mathrm{O}(\bar{V_i}, \bar{q_i})$.
   Notice that  if the dimension of $\bar{V_i}$ is even and positive, then $\mathrm{O}(\bar{V_i}, \bar{q_i})^{\mathrm{red}} (= \mathrm{O}(\bar{V_i}, \bar{q_i}))$  is disconnected.
   If the dimension of $\bar{V_i}$ is odd, then
    $\mathrm{O}(\bar{V_i}, \bar{q_i})^{\mathrm{red}} (= \mathrm{SO}(\bar{V_i}, \bar{q_i}))$  is connected.

To prove the above claim,
let $R$ be an \'etale local $A$-algebra with $\kappa_R$ the residue field of $R$.
Since $\underline{G}$ is smooth over $A$, the map $\underline{G}(R)\rightarrow \tilde{G}(\kappa_R)$ is surjective by \textit{Hensel's lemma}.

Now, we choose an element $g\in \tilde{G}(\kappa_R)$ and its lifting $\tilde{g} \in \underline{G}(R)$.
Since $\tilde{g}$ induces an element of $\mathrm{O}(\bar{V_i}, \bar{q_i})^{\mathrm{red}}(\kappa_R)$,
 we have a map  from $\tilde{G}(\kappa_R)$ to $\mathrm{O}(\bar{V_i}, \bar{q_i})^{\mathrm{red}}(\kappa_R)$.
 It is easy to see that this map is well-defined, i.e. independent of a lifting $\tilde{g}$ of $g$.

 In order to show that this map is representable, we interpret it as matrices.
Recall that a matrix form of elements of $\tilde{G}(\kappa_R)$ is
$$\begin{pmatrix} 2^{max\{0,j-i\}}m_{i,j} \end{pmatrix},$$
where the diagonal block $m_{i,i}$ is $$\begin{pmatrix} s_i&2y_i\\ 2v_i&1+2z_i \end{pmatrix} \textit{or} \begin{pmatrix} s_i&r_i&2t_i\\ 2y_i&1+2x_i&2z_i\\ v_i&u_i&1+2w_i \end{pmatrix}$$ if $L_i$ is \textit{of type} $\textit{I}^o$ or \textit{of type} $\textit{I}^e$, respectively.

Let $g= \begin{pmatrix} 2^{max\{0,j-i\}}m_{i,j} \end{pmatrix}$.
Then $g$ maps to the following:

When $L_i$ is \textit{of type II},
$g$ maps to $ m_{i, i}$ (if $L_i$ is free) or
to $ \begin{pmatrix} m_{i, i}&0\\ \delta_{i-1}e_{i-1}\cdot m_{i-1, i}+\delta_{i+1}e_{i+1}\cdot m_{i+1, i}&1 \end{pmatrix}$ (if $L_i$ is bound).
Here, $
\delta_{j} = \left\{
  \begin{array}{l l}
  1    & \quad  \text{if $L_j$ is \textit{of type I}};\\
  0    &   \quad  \text{if $L_j$ is \textit{of type II}},
    \end{array} \right.
$
and $e_{j}=(0,\cdots, 0, 1)$ (resp. $e_j=(0,\cdots, 0, 1, 0)$) of size $1\times n_{j}$
if $L_{j}$ is \textit{of type} $\textit{I}^o$ (resp. \textit{of type} $\textit{I}^e$).

When $L_i$ is \textit{of type $I$},
$g$ maps to $ s_i$ if $L_i$ is \textit{free of type $I^o$} or \textit{free of type $I^e_2$}.
For the other cases with $L_i$  \textit{of type $I$}, namely if $L_i$ is \textit{free of type $I^e_1$}, \textit{bound of type $I^o$},  or \textit{bound of type $I^e$}, $g$ maps to $ \begin{pmatrix} s_i&0\\ \delta_{i}^{\prime}\sqrt{\widetilde{\gamma_i}}\cdot v_i +
(\delta_{i-1}e_{i-1}\cdot m_{i-1, i}+\delta_{i+1}e_{i+1}\cdot m_{i+1, i})\cdot\tilde{e_i}&1 \end{pmatrix}$.

Here, $\delta_{j}$ and $e_{j}$ are as explained above and $\sqrt{\widetilde{\gamma_i}}$ is as explained at the beginning of Appendix.
In addition,  
$\delta_{i}^{\prime} = \left\{
  \begin{array}{l l}
  1    & \quad  \text{if $L_i$ is \textit{of type $I^e$}};\\
  0    &   \quad  \text{if $L_i$ is \textit{of type $I^o$}},
    \end{array} \right.
$ and
$\tilde{e_i}=\begin{pmatrix} \mathrm{id}\\0 \end{pmatrix}$ of size $n_i\times (n_{i}-1)$ (resp. $n_i\times (n_{i}-2)$), where $\mathrm{id}$ is the identity matrix of size $(n_i-1)\times (n_{i}-1)$ (resp. $(n_i-2)\times (n_{i}-2)$) if $L_{i}$ is \textit{of type} $\textit{I}^o$ (resp. \textit{of type} $\textit{I}^e$).

This matrix interpretation induces the Hopf algebra morphism (polynomials of degree at most 1)
from the coordinate ring of $\mathrm{O}(\bar{V_i}, \bar{q_i})^{\mathrm{red}}$ to the coordinate ring of $\tilde{G}$,
which accordingly induces an algebraic group homomorphism $\varphi_i : \tilde{G} \rightarrow \mathrm{O}(\bar{V_i}, \bar{q_i})^{\mathrm{red}}$ such that
the group homomorphism induced by $\varphi_i$ at the level of $\kappa_R$-points is as given above.

Since $\tilde{G}$ is smooth over $\kappa$, the set of $\kappa_R$-points of $\tilde{G}$ for all finite  extensions $\kappa_R/\kappa$
is dense in $\tilde{G}$ by Corollary 13 of Section 2.2 in \cite{BLR}. 
Therefore, $\varphi_i$ is uniquely determined by the map constructed above at the level of $\kappa_R$-points.

\begin{Thm}
The morphism $\varphi$ defined by
 $$\varphi=\prod_i \varphi_i : \tilde{G} ~ \longrightarrow  ~\prod_i \mathrm{O}(\bar{V_i}, \bar{q_i})^{\mathrm{red}}$$
 is surjective.
\end{Thm}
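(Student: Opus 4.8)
The plan is to prove surjectivity of $\varphi = \prod_i \varphi_i$ by establishing it factor by factor and then assembling the factors independently. The key observation is that each $\mathrm{O}(\bar{V_i}, \bar{q_i})^{\mathrm{red}}$ is a smooth connected-or-disconnected algebraic group whose points over $\bar\kappa$ are generated by reflections (or, in the odd-dimensional case where we land in $\mathrm{SO}$, by products of pairs of reflections), so it suffices to produce, for each generating element of the target, a lift to $\tilde{G}(\bar\kappa)$ — that is, an element of $\underline{M}^{\ast}(\bar\kappa)$ stabilizing $q$ whose induced action on $\bar{V_i} = B_i/Z_i$ realizes the prescribed isometry while acting trivially on all the other factors $\bar{V_j}$, $j \neq i$. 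Because surjectivity of a morphism of algebraic groups can be checked on $\bar\kappa$-points together with a dimension/smoothness count, and since $\tilde G$ is smooth by Theorem 3.8, I would reduce to the set-theoretic statement that $\varphi(\tilde G(\bar\kappa)) = \prod_i \mathrm{O}(\bar{V_i}, \bar{q_i})^{\mathrm{red}}(\bar\kappa)$.

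\emph{First} I would fix an index $i$ and construct lifts of reflections in $\mathrm{O}(\bar V_i, \bar q_i)$. Given a nonsingular vector $\bar v \in \bar V_i = B_i/Z_i$, I would lift it to an actual vector $v \in B_i$ with $\frac{1}{2^{i+1}}q(v)$ a unit mod $2$, and write down the Cayley-type or reflection isometry $\tau_v \in \mathrm{O}(V,q)$ that fixes $v^{\perp}$; the content is to verify that after rescaling so that it is integral, $\tau_v$ actually lies in $\underline{M}^{\ast}(\bar\kappa)$, i.e. that it respects the entire flag of lattices $A_j, B_j, W_j, X_j, Y_j, Z_j$ and induces the identity on the quotients $A_j/B_j$, $W_j/(X_j \cap Z_j)$, $(X_j\cap Z_j)^{\perp}/W_j^{\perp}$ prescribed in Section 3.2. \emph{Second}, I would check that such a reflection supported on the block $L_i$ acts trivially on $\bar V_j$ for $j \neq i$, which follows from the block structure in Section 3.1 together with the $2$-adic valuations governing the off-diagonal blocks — the reflection differs from the identity only in the $(i,i)$-block modulo the relevant power of $2$. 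This \textbf{independence across factors} is what lets the surjections assemble into a surjection onto the full product rather than merely onto the image of a diagonal.

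\emph{The main obstacle} I expect is the analysis of the \emph{bound} blocks and the subtle distinction between $\mathrm{O}$ and $\mathrm{SO}$ in the even- versus odd-dimensional cases. When $\dim \bar V_i$ is odd, the reduced orthogonal group is $\mathrm{SO}(\bar V_i,\bar q_i)$, so a single reflection lands in the nonidentity coset and I must instead lift products of two reflections to stay within the image of $\varphi_i$; conversely, when $\dim\bar V_i$ is even the full disconnected $\mathrm{O}(\bar V_i,\bar q_i)$ is the target and I need lifts hitting both components, which again comes from single reflections. The delicate point is confirming that the lift can be chosen to respect the additional congruence constraints imposed by neighboring \emph{bound} lattices $L_{i\pm 1}$ of parity type $\mathrm{I}$ (the entrywise conditions (3)–(5) of Section 3.1), since a naive reflection need not satisfy these. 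I would resolve this by adjusting the lift by an element of the unipotent part of $\underline{M}^{\ast}$ — which does not change the induced map on any $\bar V_j$ — to force the required off-diagonal entries into the ideal $(2)$.

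\emph{Finally}, having exhibited $\bar\kappa$-point lifts generating each factor and acting trivially on the others, I would conclude that $\varphi$ is surjective on $\bar\kappa$-points; combined with the smoothness of $\tilde G$ and of the target, this yields surjectivity of $\varphi$ as a morphism of algebraic groups over $\kappa$.
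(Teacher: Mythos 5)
Your strategy is sound but genuinely different from the paper's. The paper does not lift generators at all: it first invokes the dimension count of Lemma 4.2 (whose proof occupies the Appendix), namely $\dim\tilde{G}=\dim\mathrm{Ker}\,\varphi+\sum_i\dim\mathrm{O}(\bar{V_i},\bar{q_i})^{\mathrm{red}}$, to conclude that $\mathrm{Im}\,\varphi$ contains the identity component of the target; it then only has to reach the non-identity components, which occur exactly for the \textit{free} $L_i$ of type $\textit{II}$, $\textit{I}^o$ or $\textit{I}^e_2$, and for those it exhibits explicit block-diagonal closed subgroup schemes $H_i\subset\tilde{G}$ that $\varphi_i$ maps isomorphically onto $\mathrm{O}(\bar{V_i},\bar{q_i})^{\mathrm{red}}$. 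Your reflection-lifting argument is instead self-contained on $\bar{\kappa}$-points, must also cover the \textit{bound} blocks and the whole identity component, and trades the Appendix computation for a characteristic-$2$ Cartan--Dieudonn\'e input plus a lattice-theoretic verification. Two corrections to your execution. First, your ``main obstacle'' is largely illusory: once you check that the transvection $\tau_v(x)=x-\frac{2\langle x,v\rangle}{q(v)}v$, for $v\in B_i\otimes_AA^{sh}$ lifting an anisotropic $\bar{v}$ (so that $q(v)$ has order exactly $i+1$), preserves $L\otimes_AA^{sh}$ --- which follows from $\langle L,B_i\rangle\subseteq 2^iA$ --- it lies in $\bar{K}=\underline{G}(A^{sh})$, and \emph{all} the congruence conditions of Section 3.1, including those at bound neighbours, are automatic for elements of $\bar{K}$; no unipotent correction is needed, and adjusting by an element of $\underline{M}^{\ast}$ not in $\underline{G}$ would destroy the isometry property anyway. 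The real work is the step you mention only in passing: verifying that $2^{\max(0,j-i)}v\in Z_j\otimes_AA^{sh}$ for $j\neq i$, so that $\tau_v$ acts trivially on $\bar{V_j}$, while inducing the transvection at $\bar{v}$ on $\bar{V_i}$. Second, in residue characteristic $2$ the odd-dimensional factors are connected ($\mathrm{O}(\bar{V_i},\bar{q_i})^{\mathrm{red}}=\mathrm{SO}$, isomorphic to a symplectic group) and are already generated by the transvections themselves, so the ``products of pairs of reflections'' detour is unnecessary; what you do need is generation of the even orthogonal groups over the infinite field $\bar{\kappa}$ by orthogonal transvections. With these points repaired your argument goes through and even removes the dependence of Theorem 4.1 on Lemma 4.2, although the paper needs that lemma anyway for the structure of the kernel in Theorem 4.12.
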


\begin{proof}
 Assume that dimension of $\tilde{G} =$
 dimension of $\mathrm{Ker~}\varphi$ + $\sum_i$ (dimension of $\mathrm{O}(\bar{V_i}, \bar{q_i})^{\mathrm{red}}$).
Thus $\mathrm{Im~}\varphi$ contains the identity component of $\prod_i \mathrm{O}(\bar{V_i}, \bar{q_i})^{\mathrm{red}}$.
Here $\mathrm{Ker~}\varphi$ denotes the kernel of $\varphi$ and $\mathrm{Im~}\varphi$ denotes the image of $\varphi.$

Recall that a matrix form of elements of $\tilde{G}(R)$ for a $\kappa$-algebra $R$ is
$$m= \begin{pmatrix} 2^{max\{0,j-i\}}m_{i,j} \end{pmatrix},$$
where the diagonal block $m_{i,i}$ is $$\begin{pmatrix} s_i&2y_i\\ 2v_i&1+2z_i \end{pmatrix} \textit{or} \begin{pmatrix} s_i&r_i&2t_i\\ 2y_i&1+2x_i&2z_i\\ v_i&u_i&1+2w_i \end{pmatrix}$$ if $L_i$ is \textit{of type} $\textit{I}^o$ or \textit{of type} $\textit{I}^e$, respectively.

 Let $\mathcal{H}$ be the set of $i$'s such that $\mathrm{O}(\bar{V_i}, \bar{q_i})^{\mathrm{red}}$ is disconnected.
Notice that $\mathrm{O}(\bar{V_i}, \bar{q_i})^{\mathrm{red}}$ is disconnected exactly when $L_i$ is \textit{free} \textit{of type II}, \textit{free} \textit{of type} $\textit{I}^o$,
or \textit{free} \textit{of type} $\textit{I}^e_2$.
For such a lattice $L_i$, we define the closed subgroup scheme $H_i$ of $\tilde{G}$ as follows:
\begin{itemize}
\item If $L_i$ is \textit{free} \textit{of type II}, then $H_i$ is defined by the equations $m_{j,k}=0$ if $ j\neq k$, and $m_{j,j}=\mathrm{id}$ if $j \neq i$.
\item If $L_i$ is \textit{free} \textit{of type} $\textit{I}^o$, then  $H_i$ is defined by the equations $m_{j,k}=0 $ if $ j\neq k$, $m_{j,j}=\mathrm{id}$ if $j \neq i$,
and $y_i=0, v_i=0, z_i=0$.
\item If $L_i$ is \textit{free} \textit{of type} $\textit{I}^e_2$, then  $H_i$ is defined by the equations $m_{j,k}=0 $ if $ j\neq k$, $m_{j,j}=\mathrm{id}$ if $j \neq i$,
and $r_i=0, t_i=0, y_i=0, x_i=0, z_i=0, v_i=0, u_i=0, w_i=0$.
\end{itemize}
Then $\varphi_i$ induces an isomorphism between $H_i$ and
$\mathrm{O}(\bar{V_i}, \bar{q_i})^{\mathrm{red}}$.
We consider the morphism
\[\prod_{i\in\mathcal{H}} H_i \longrightarrow \tilde{G},\]
$(h_i)_{i\in\mathcal{H}}\mapsto\prod_{i\in\mathcal{H}}h_i$.
Note that   $H_i$ and $H_j$ commute with each other in the sense that $h_i\cdot h_j=h_j\cdot h_i$ for all $i \neq j$, where $h_i\in H_i(R)$ and $ h_j\in H_j(R)$
for a $\kappa$-algebra $R$.
Based on this, the above morphism becomes a group homomorphism.
In addition, the fact that
$H_i\cap H_j=0$ for all $i\neq j$
implies that this morphism is injective.
Thus the product $\prod_{i\in\mathcal{H}} H_i$ is embedded into $\tilde{G}$ as a closed subgroup scheme.
Since $\varphi_i|_{H_j}$ is trivial for $i\neq j$, the morphism
$$\prod_{i\in\mathcal{H}}\varphi_i : \prod_{i\in\mathcal{H}}H_i \rightarrow \prod_{i\in\mathcal{H}} \mathrm{O}(\bar{V_i}, \bar{q_i})$$
is an isomorphism.
Therefore, $\varphi$ is surjective.
Now it suffices to establish the assumption made at the beginning of the proof, which is the next lemma.
\end{proof}

\begin{Lem}

$\mathrm{Ker~}\varphi $ is isomorphic to $ \textbf{A}^{l}\times (\mathbb{Z}/2\mathbb{Z})^{\alpha+\beta}$ as $\kappa$-varieties,
 where $\textbf{A}^{l}$ is an affine space of the dimension $l$.
 Here,
 \begin{itemize}
 \item $\alpha$ is the number of $i$'s such that $L_i$ is \textit{free} \textit{of type} $\textit{I}^e_1$.
 \item  $\beta$ is the size of the set of $j$'s such that $L_j$ is \textit{of type I} and $L_{j+2}$ is \textit{of type II}.
\item  $l$ is such that $l + \sum_i$ (dimension of $\mathrm{O}(\bar{V_i}, \bar{q_i})^{\mathrm{red}}$) $=$  dimension of $\tilde{G}$.
\end{itemize}

\end{Lem}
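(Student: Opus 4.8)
The plan is to compute $\mathrm{Ker~}\varphi$ explicitly by exploiting the matrix description of $\tilde{G}(R)$ recorded in Section 3.1 and Theorem 4.1. An element $m \in \tilde{G}(R)$ lies in $\mathrm{Ker~}\varphi$ precisely when each $\varphi_i(m)$ is trivial, i.e. $m$ induces the identity on every $\bar{V_i}=B_i/Z_i$. I would translate this into concrete conditions on the block entries $m_{i,j}$, $s_i$, $y_i$, $v_i$, $z_i$ (and the type-$\textit{I}^e$ analogues $r_i,t_i,x_i,u_i,w_i$), and then separate the resulting defining equations into two kinds: those that cut out an affine space (the continuous coordinates), and those that are genuinely discrete, forcing a finite set of choices. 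The dimension count defining $l$ is then automatic once the discrete part is identified, since $l$ is \emph{defined} by $l + \sum_i \dim \mathrm{O}(\bar{V_i},\bar{q_i})^{\mathrm{red}} = \dim \tilde{G}$; the real content is showing the discrete part is exactly $(\mathbb{Z}/2\mathbb{Z})^{\alpha+\beta}$.

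\textbf{Locating the discrete factors.} First I would analyze the diagonal blocks. For $L_i$ of type $\textit{I}^o$, the entry $s_i$ (the $(n_i-1)\times(n_i-1)$ block governing $\bar{V_i}$ when $i\in\mathcal{H}$) must reduce to the identity, but when $L_i$ is \emph{free of type} $\textit{I}^e_1$ the vector space $\bar{V_i}$ is odd-dimensional with $X_i=Z_i$, and the condition that $m$ act trivially on $\bar{V_i}$ leaves one residual binary choice coming from the non-alternating part of the form — this is the source of the $\alpha$ factor. I would make this precise by writing out how the distinguished vector $e$ of Section 2.3 interacts with the requirement $\varphi_i(m)=\mathrm{id}$: triviality on $\bar{V_i}=B_i/Z_i$ together with the identity on $W_i/X_i$ still permits a sign, and over a connected $\kappa$-algebra this sign is a $\mathbb{Z}/2\mathbb{Z}$.

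\textbf{The off-diagonal $\beta$ contribution.} Next I would examine the off-diagonal blocks $g_{i,i+1}$ and the constraints from Section 3.1(5) together with condition (5) of the $\underline{Q}$-matrix description. Whenever $L_j$ is of type I and $L_{j+2}$ is of type II, the stabilizing conditions linking the block at level $j$ to the block two steps away leave a single binary degree of freedom (a coordinate forced to lie in $\{0,1\}$ modulo the continuous directions, rather than ranging over all of $R$). I would isolate this coordinate, show its defining equation is of the form $t^2 = t$ or $t(t-1)=0$ over the reduced ring $R$, and hence contributes a factor $\mathbb{Z}/2\mathbb{Z}$; summing over all such $j$ gives $\beta$. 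Crucially I must verify that the $\alpha$ and $\beta$ factors are \emph{independent} — that no relation collapses two binary choices into one — which I would do by exhibiting the coordinates as algebraically independent functions on $\mathrm{Ker~}\varphi$.

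\textbf{Assembling the isomorphism and the main obstacle.} Having separated the continuous coordinates (which by Lemma 3.1-type reasoning fill out an affine space $\textbf{A}^l$) from the $\alpha+\beta$ binary coordinates, I would conclude $\mathrm{Ker~}\varphi \cong \textbf{A}^l \times (\mathbb{Z}/2\mathbb{Z})^{\alpha+\beta}$ as $\kappa$-varieties. I expect the main obstacle to be the careful bookkeeping that \emph{exactly} these configurations — free type $\textit{I}^e_1$ lattices, and the type-I/type-II pairs at distance two — produce discrete choices, and that all other a priori suspicious entries (e.g. the $1+2z_i$ corners, or off-diagonal blocks between adjacent type-I lattices) in fact vary continuously and fold into $\textbf{A}^l$. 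This requires systematically running through every case of the type assignment (I, $\textit{I}^o$, $\textit{I}^e_1$, $\textit{I}^e_2$, II, bound versus free) and checking which quadratic defining equation is separable over $\kappa$; it is essentially a finite but delicate verification rather than a single clever argument.
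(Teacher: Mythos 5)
Your overall strategy---write out the matrix equations ${}^t m\, q\, m = q$ for an element of $\mathrm{Ker}\,\tilde{\varphi}$, then separate the defining equations into solvable (continuous) ones and separable quadratic (discrete) ones---is indeed the shape of the paper's argument in the Appendix. But there is a genuine gap in your mechanism for the $(\mathbb{Z}/2\mathbb{Z})^{\beta}$ factor. You propose to find, for each $j$ with $L_j$ of type I and $L_{j+2}$ of type II, a single coordinate whose defining equation is of the form $t^2=t$, attributed to ``the stabilizing conditions linking the block at level $j$ to the block two steps away.'' No such standalone equation exists. What the computation actually produces at each type-I constituent $L_i$ is an equation $\mathcal{F}_i$ of the form $z_i + z_i^2 + \delta_{i-1}'(b_{i-1}') + \delta_{i+1}'(b_i) + \widetilde{\gamma}_{i-1}e_{i-1}^2 + \widetilde{\gamma}_{i+1}d_i^2 + x_{i-2}^2 + x_i^2 = 0$, which couples $z_i$ to off-diagonal entries shared with the neighboring blocks; taken alone it cuts out a connected variety (one can solve for the cross terms). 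The discrete binary choice only appears after summing $\sum_{l=0}^{m_j}\mathcal{F}_{j-2l} + \sum_l' \widetilde{\gamma}_{j+1-2l}\cdot\mathcal{E}_{j+1-2l}$ along the entire maximal chain of type-I constituents terminating at $j$ (i.e.\ $L_{j-2l}$ of type I for $0\le l\le m_j$ and $L_{j-2(m_j+1)}$ of type II): the cross terms cancel telescopically and leave the pure Artin--Schreier relation $\sum_l (z_{j-2l}+z_{j-2l}^2) + \sum_l'(\widetilde{\gamma}\,u + \widetilde{\gamma}^2 u^2) = 0$, which is separable and contributes exactly one $\mathbb{Z}/2\mathbb{Z}$ \emph{per chain}, not per constituent. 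This global cancellation is the heart of why $\beta$ counts precisely the chain endpoints ($L_j$ of type I, $L_{j+2}$ of type II), and a literal execution of your local plan would fail to locate the discrete factors and would risk miscounting them.

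A secondary gap: you assert the dimension count is ``automatic once the discrete part is identified, since $l$ is defined by $l + \sum_i \dim \mathrm{O}(\bar{V_i},\bar{q_i})^{\mathrm{red}} = \dim\tilde{G}$.'' It is not. Exhibiting $\mathrm{Ker}\,\varphi \cong \mathbf{A}^{l'}\times(\mathbb{Z}/2\mathbb{Z})^{\alpha+\beta}$ gives some $l'$; the identity $l'=l$ is equivalent to $\dim\mathrm{Ker}\,\varphi + \sum_i\dim\mathrm{O}(\bar{V_i},\bar{q_i})^{\mathrm{red}} = \dim\tilde{G}$, which is exactly the hypothesis that the surjectivity argument of Theorem 4.1 needs as input, so it cannot be assumed. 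The paper closes this by a separate tangent-space computation (Lemma 7.1): identifying the tangent space of $\mathrm{Ker}\,\varphi$ at the identity with $\mathrm{Ker}\,\rho_{*,e}\cap T_0(\bar{\kappa})$ and transporting it via $X\mapsto q\cdot X$ into $T_3(\bar{\kappa})$, where its dimension can be read off. Your proposal needs either this argument or an explicit count of the free variables remaining after imposing the solvable equations (7.8)--(7.13).
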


The proof is postponed to the Appendix.

\begin{Rmk}
 We describe Im $\varphi_i$ as follows.
     \[
      \begin{array}{c|c}
      \mathrm{Type~of~lattice~}  L_i & \mathrm{Im~}  \varphi_i \\
      \hline
      \textit{I}^o,\ \  \mathrm{\textit{free}} & \mathrm{O}(n_i-1, \bar{q_i})\\
      \textit{I}^e_1,\ \  \mathrm{\textit{free}} &\mathrm{SO}(n_i-1, \bar{q_i})\\
      \textit{I}^e_2,\ \  \mathrm{\textit{free}} &\mathrm{O}(n_i-2, \bar{q_i})\\
      \textit{II},\ \  \mathrm{\textit{free}} &\mathrm{O}(n_i, \bar{q_i})\\
      \textit{I}^o,\ \  \mathrm{\textit{bound}} &\mathrm{SO}(n_i, \bar{q_i})\\
      \textit{I}^e,\ \  \mathrm{\textit{bound}} &\mathrm{SO}(n_i-1, \bar{q_i})\\
      \textit{II},\ \  \mathrm{\textit{bound}} &\mathrm{SO}(n_i+1, \bar{q_i})\\
      \end{array}
    \]
\end{Rmk}

     \subsection{The first construction of component groups}

The purpose of this subsection and the next subsection is to define the surjective morphism from $\tilde{G}$ to $(\mathbb{Z}/2\mathbb{Z})^{\alpha +\beta}$,
where $\alpha$ and $\beta$ are defined in Lemma 4.2.\\

   We first define two constant group schemes $E_i$ and $F_i$.
We consider the closed subgroup scheme $\tilde{H}_i$ of $\tilde{G}$ defined by equations
$$m_{i,j}=0 \mathrm{~and~} m_{j,j}=\mathrm{id} \text{ for all } j\neq i.$$
Then $\tilde{H}_i$ is isomorphic to the special fiber of the smooth affine group scheme associated to the lattice $L_i$.

If $L_i$ is \textit{of type} $\textit{I}^e$, then a matrix form of elements of $\tilde{H}_i$ is
\[\begin{pmatrix} s&r&2t\\ 2y&1+2x&2z\\ v&u&1+2w \end{pmatrix}.\]
The subgroup scheme $E_i$ is defined by the following equations:
   $$s=\mathrm{id}, r=0, t=0, y=0, v=0, z=0, \text{ and } w=0.$$
   The equations defining the subgroup scheme $F_i$ is as follows:
   $$s=\mathrm{id}, r=0, t=0, y=0, v=0, u=0, \text{ and }  w=0.$$
   Obviously, these are closed subgroup schemes of $\tilde{G}$.
   If $L_i$ is \textit{of type} $\textit{I}^e$, then  $F_i$ is isomorphic to $\mathbb{Z}/2\mathbb{Z}$.
In particular,
   if $L_i$ is \textit{free of type} $\textit{I}^e_1$, then $E_i$   is  isomorphic to $\mathbb{Z}/2\mathbb{Z}$ as well.

If $L_i$ is \textit{of type} $\textit{I}^o$, then a matrix form of elements of $\tilde{H}_i$ is
\[\begin{pmatrix} s_i&2y_i\\ 2v_i&1+2z_i \end{pmatrix}.\]
The closed subgroup scheme $F_i$ of $\tilde{H}_i$ is defined by equations
$$s_i=\mathrm{id}, y_i=0,  \text{ and } v_i=0$$
 and it is isomorphic to $\mathbb{Z}/2\mathbb{Z}$.\\

    We now construct the morphism $\xi_i : \tilde{G} \longrightarrow \mathbb{Z}/2\mathbb{Z}$.
    Assume that $L_i$ is \textit{free} \textit{of type} $\textit{I}^e_1$.
    Consider the linear form
    $$\frac{1}{2^i}\langle -,e \rangle \mathrm{~mod~}2 \mathrm{~on~} A_i/X_i,$$
     where $e$ is the vector in $A_i/X_i$ as defined in Section 2.3.
    This linear form is fixed by elements of $\underline{G}(R)$ for a flat $A$-algebra $R$,
    because the vector $e$ is fixed by elements of $\underline{G}(R)$.
    We choose an arbitrary linear form $l$ on $A_i$ such that
    $$l ~\mathrm{mod~} 2 = \frac{1}{2^i}\langle -,e \rangle ~\mathrm{mod~} 2 \mathrm{~on~} A_i/X_i.$$
    Define the quadratic form $q^{\prime}$ by $l^2$. Notice that the norm of the quadratic lattice $(A_i, \frac{1}{2^i}q+q^{\prime})$ is the prime ideal $(2)$.
 If we consider the quadratic form $\frac{1}{2}(\frac{1}{2^i}q+q^{\prime} ) ~\mathrm{mod~} 2$ defined over the $\kappa$-vector space $A_i/X_i$,
 then it is stabilized by elements of $\underline{G}(R)$ for a flat $A$-algebra $R$.
  It is obvious that this quadratic form defined over $A_i/X_i$ is nonsingular and independent of the choice of $l$.
 Thus we have a morphism of algebraic groups
 $$\tilde{G} \longrightarrow \mathrm{O}(A_i/X_i, \frac{1}{2}(\frac{1}{2^i}q+q^{\prime} ))$$
 defined over $\kappa$.
 As  matrices, if we express $g=\begin{pmatrix} 2^{max\{0,j-i\}}m_{i,j} \end{pmatrix}\in \tilde{G}(R)$ with\\
 $m_{i,i}= \begin{pmatrix} s_i&r_i&2t_i\\ 2y_i&1+2x_i&2z_i\\ v_i&u_i&1+2w_i \end{pmatrix}$ for a $\kappa$-algebra $R$,
 then $g$ maps to  $\begin{pmatrix} s_i&r_i&0\\ 0&1&0\\ v_i&u_i&1 \end{pmatrix}$.

 Note that the dimension of the $\kappa$-vector space $A_i/X_i$ is the same as that of $L_i/2L_i$, which is  the even integer $n_i$.
 On the other hand, there is a surjective morphism from this orthogonal group onto $\mathbb{Z}/2\mathbb{Z}$, namely the Dickson invariant, since the dimension of the $\kappa$-vector space $A_i/X_i$ is even.
 We define
 $$\xi_i : \tilde{G} \longrightarrow \mathbb{Z}/2\mathbb{Z}$$
to be the composition of the Dickson invariant and the preceding morphism.

\begin{Rmk}
We describe the Dickson invariant as the determinant morphism of a smooth affine group scheme.
Let $(\bar{V}, \bar{q})$ be a nonsingular quadratic space, where $\bar{V}$ is a $\kappa$-vector space of even dimension.
Then we can choose a unimodular lattice $(L, q)$  \textit{of type II}
such that $(L/2L, \frac{1}{2}q ~\textit{mod 2})=(\bar{V}, \bar{q})$.
If $\mu_{n, A}$ is the group scheme of $n$-th roots of unity defined over $A$, then
the determinant morphism gives a morphism of  group schemes
$$\textit{det} : \underline{G}^{\prime} \rightarrow \mu_{2, A}$$
    defined over $A$.
    Here, $\underline{G}^{\prime}$ is a naive integral model such that
    $\underline{G}^{\prime}(R)=\mathrm{Aut}_{R}(L\otimes_AR, q\otimes_AR)$ for every commutative $A$-algebra $R$.
    Moreover, we can regard $\mu_{2, A}$ as a naive integral model of the orthogonal group associated to a quadratic lattice of rank 1.
Then it is easily seen that the smooth affine group scheme associated to this quadratic lattice of rank 1
 is $\mathbb{Z}/2\mathbb{Z}$, by observing the equation defining it.

   Based on the above, the morphism $det$ induces
     a morphism of group schemes from $\underline{G}$ to $\mathbb{Z}/2\mathbb{Z}$ defined over $A$ by functoriality of smooth integral models
     and this morphism gives the morphism
     $$\widetilde{det} : \tilde{G} \rightarrow \mathbb{Z}/2\mathbb{Z}$$
     defined over $\kappa$.
Furthermore, it is easily seen that $\widetilde{det}$ is surjective.
In fact, the kernel of $\widetilde{det}$ is the identity component.
To see this, we observe that  the morphism
 \[\varphi : \tilde{G} ~ \longrightarrow   \mathrm{O}(L/2L, \frac{1}{2}q ~\textit{mod 2}) (=\mathrm{O}(\bar{V}, \bar{q}))\]
 is an isomorphism by Theorem 4.1 and Lemma 4.2 and so
 $\tilde{G}$ has two connected components.

 On the other hand, the Dickson invariant gives a surjective morphism from $\tilde{G} (\cong \mathrm{O}(\bar{V}, \bar{q}))$
 to $\mathbb{Z}/2\mathbb{Z}$ and its kernel is the identity component as well.
    Therefore, the Dickson invariant is the same as the morphism $\widetilde{det}$.

\end{Rmk}

\begin{Lem}
The restricted morphism $$\xi_i|_{E_i} : E_i \longrightarrow \mathbb{Z}/2\mathbb{Z}$$ is an isomorphism.
Recall that $E_i$ is defined at the beginning of this subsection.
\end{Lem}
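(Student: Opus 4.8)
The plan is to exploit that both the source $E_i$ and the target $\mathbb{Z}/2\mathbb{Z}$ are finite group schemes of order $2$, so that $\xi_i|_{E_i}$ is an isomorphism precisely when it is nontrivial, i.e.\ when it sends the unique nontrivial element of $E_i$ to $1\in\mathbb{Z}/2\mathbb{Z}$. Thus I would first pin down that nontrivial element explicitly, and then compute its Dickson invariant.

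First I would fix a basis adapted to the structural theorem: since $L_i$ is \emph{free of type} $\textit{I}^e_1$, after scaling by $2^{-i}$ it is \textit{unimodular} of the shape $\bigoplus A(0,0)\oplus A(1,2\gamma)$ with $\gamma$ a unit, so I take the hyperbolic vectors together with the pair $e_1,e_2$ spanning the block $A(1,2\gamma)$, whose Gram matrix is $\left(\begin{smallmatrix}1&1\\ 1&2\gamma\end{smallmatrix}\right)$. A short computation with the defining identity of Section 2.3 identifies the vector $e$ as $e_2$, so that $W_i/X_i=\langle e_2\rangle$ and $A_i/B_i=\langle e_1\rangle$; in the block notation of Section 3.1 the vector $e_1$ is the middle coordinate (carrying $1+2x$) and $e_2$ is the last (carrying $1+2w$). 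Writing a general element of $E_i$ in the type-$\textit{I}^e$ matrix form with the defining equations $s=\mathrm{id}$, $r=t=y=v=z=w=0$ leaves only the entries $x$ and $u$, and imposing membership in $\tilde{G}$ (orthogonality against the above Gram matrix) yields the relations $x\equiv\gamma u$ and $u(1+\gamma u)\equiv 0$ modulo $2$. Over the special fiber these cut out exactly the two solutions $u\equiv 0$ (the identity) and $u\equiv\gamma^{-1}$, reconfirming $E_i\cong\mathbb{Z}/2\mathbb{Z}$ and exhibiting the nontrivial element $\eta$ with $u\equiv\gamma^{-1}$, $x\equiv 1$.

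Next I would read off the action of $\eta$ on $A_i/X_i$. Because $L_i$ is of type $\textit{I}^e_1$ we have $X_i=Z_i=2L_i$, so $A_i/X_i=L_i/2L_i$ and the linear action of $\eta$ is simply the reduction of its matrix modulo $2$; the bookkeeping factors of $2$ kill the entries $2x$, while the entry $u$ (which carries no factor of $2$) survives. Hence $\eta$ fixes the hyperbolic part and $e_2$ and sends $e_1\mapsto e_1+\gamma^{-1}e_2$, i.e.\ it is an orthogonal transvection of $(A_i/X_i,\bar{q}')$. Since $\mathrm{Im}(\eta-\mathrm{id})=\langle e_2\rangle$ is one-dimensional, its Dickson invariant equals $\dim_{\kappa}\mathrm{Im}(\eta-\mathrm{id})\bmod 2=1$; note that $A_i/X_i$ has even dimension $n_i$, so the Dickson invariant is indeed defined. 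Therefore $\xi_i(\eta)=1$ and $\xi_i|_{E_i}$ is an isomorphism.

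The main obstacle I anticipate is bookkeeping rather than conceptual: one must translate the \emph{formal} matrix form of Section 3.1 (with its placeholder $2$'s and the twisted multiplication of \cite{GY}, Section 5.3) into the genuine $\kappa$-linear action on $A_i/X_i$, keeping careful track of which entries survive reduction modulo $2$, and one must check that the orthogonality constraints really cut $E_i$ down to the two claimed points with the nontrivial one a transvection. The one point requiring genuine care is the identification $X_i=Z_i=2L_i$, valid exactly because $L_i$ is of type $\textit{I}^e_1$, since it is this equality that makes $A_i/X_i$ the full even-dimensional space $L_i/2L_i$ on which the Dickson invariant is nonzero.
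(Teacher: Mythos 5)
Your proof is correct and follows essentially the same route as the paper's: the paper reduces to the rank-$2$ block $A(1,2\gamma)$ and then identifies the nontrivial element of $E_i$ and its image under $\xi_i$ explicitly, which is precisely the computation you carry out in full detail (your relations $x\equiv\gamma u$ and $u(1+\gamma u)\equiv 0$ are the specializations to $E_i$ of the equations $\mathcal{E}_i$ and $(7.11)$ from the Appendix, and your nontrivial element $u=\gamma^{-1}$, $x=1$ is the one the paper has in mind). The only real difference is in the last step, where you evaluate the Dickson invariant of the resulting transvection via the $\dim_{\kappa}\mathrm{Im}(\eta-\mathrm{id})\bmod 2$ formula, while the paper instead invokes the $\widetilde{det}$ description of Remark 4.4; both yield $1$, so the argument goes through.
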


\begin{proof}
If we consider the closed subgroup scheme of $\tilde{H}_i$ defined by equations
$$s=\mathrm{id}, r=0, t=0, y=0, \text{ and } v=0,$$
then this group scheme is isomorphic to the special fiber of the smooth affine group scheme associated to a lattice \textit{free of type} $\textit{I}^e_1$ with rank 2.
Since $E_i$ is a subgroup scheme of this group scheme, we may and do assume that $n_i=2$.

Based on Remark 4.4,
we describe the  morphism $\widetilde{det}$ associate to the orthogonal group $\mathrm{O}(A_i/X_i, \frac{1}{2}(\frac{1}{2^i}q+q^{\prime} ))$
explicitly.
Choose a lattice of rank 2 with a Gram matrix $\begin{pmatrix} 2&1\\ 1&2\gamma_i \end{pmatrix}$.
Here, $\gamma_i$ is a unit in $A$ such that $L_i=A(1, 2\gamma_i)$.
Since this lattice is unimodular \textit{of type II},
a matrix form of a flat $A$-algebra point of the associated smooth integral model is $\begin{pmatrix} x&y\\ u&z \end{pmatrix}$. In other words, there are no congruence conditions.
This matrix satisfies three equations:
$$
x^2+ux+\gamma_i\cdot u^2=1, ~~~
2xy+xz+uy+2\gamma_i\cdot zu=1, ~~~
y^2+yz+\gamma_i\cdot z^2=\gamma_i.
$$
The determinant of this matrix is $xz-uy=1-2(uy+xy+\gamma_i\cdot zu)$.
We also express a $\kappa$-algebra point of the smooth integral model as the matrix $\begin{pmatrix} x&y\\ u&z \end{pmatrix}$.
Then
 $\widetilde{det}$ maps $\begin{pmatrix} x&y\\ u&z \end{pmatrix}$  to  $uy+xy+\widetilde{\gamma_i}\cdot zu$.
Here, $\widetilde{\gamma_i} (\neq 0)$ is the image of $\gamma_i$ in the residue field $\kappa$.

For $g\in E_i(R)$, $\xi_i|_{E_i}(g)=\widetilde{det}(\begin{pmatrix} 1&0\\ u_i&1 \end{pmatrix})=\widetilde{\gamma_i}\cdot u_i$.
Therefore, $\xi_i|_{E_i}$ is an isomorphism.
\end{proof}

Combining all morphism $\xi_i$'s, we have the following theorem:
\begin{Thm}
 The morphism $\xi=\prod_i\xi_i : \tilde{G} \rightarrow (\mathbb{Z}/2\mathbb{Z})^{\alpha}$ is surjective.
  Here, $\alpha$ is the number of $i$'s such that $L_i$ is \textit{free} \textit{of type} $\textit{I}^e_1$.

\end{Thm}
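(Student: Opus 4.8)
The plan is to deduce the surjectivity of $\xi=\prod_i\xi_i$ from the individual isomorphisms $\xi_i|_{E_i}\colon E_i\xrightarrow{\sim}\mathbb{Z}/2\mathbb{Z}$ established in Lemma~4.5, exactly as Theorem~4.1 deduced surjectivity of $\varphi$ from the isomorphisms $\varphi_i|_{H_i}$. First I would recall that the index $i$ ranges over those $i$ for which $L_i$ is \textit{free of type} $\textit{I}^e_1$; by definition $\alpha$ is the number of such $i$, and each contributes one subgroup scheme $E_i\subset\tilde{G}$ with $\xi_i|_{E_i}$ an isomorphism onto $\mathbb{Z}/2\mathbb{Z}$. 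The target $(\mathbb{Z}/2\mathbb{Z})^{\alpha}$ is the product of these factors, and it suffices to produce a closed subgroup scheme of $\tilde{G}$ mapping isomorphically onto it.

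The key step is to form the product map
\[
\prod_{i}E_i \longrightarrow \tilde{G},\qquad (h_i)\mapsto \prod_i h_i,
\]
where the product runs over the $\alpha$ relevant indices. As in the proof of Theorem~4.1, I would verify that distinct $E_i$ and $E_j$ commute and satisfy $E_i\cap E_j=0$, so that this map is an injective group homomorphism realizing $\prod_i E_i$ as a closed subgroup scheme of $\tilde{G}$. The commutativity is transparent from the matrix description: each $E_i$ acts nontrivially only in the diagonal $(i,i)$-block and is supported on disjoint block coordinates for distinct indices, so the off-diagonal multiplication contributions vanish and $h_i\cdot h_j=h_j\cdot h_i$. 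The crucial compatibility I must check is that $\xi_i$ is trivial on $E_j$ for $i\neq j$: since $\xi_i$ is built from the quadratic form $\tfrac{1}{2}(\tfrac{1}{2^i}q+q')$ on $A_i/X_i$, and an element of $E_j$ induces the identity on $A_i/X_i$ for $j\neq i$ (it perturbs only the $j$-th Jordan block), its image under $\xi_i$ is trivial. Granting this, the composite
\[
\prod_i E_i \xrightarrow{\ \prod_i\xi_i|_{E_i}\ } (\mathbb{Z}/2\mathbb{Z})^{\alpha}
\]
is an isomorphism, being a product of the isomorphisms of Lemma~4.5.

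Combining these, the inclusion $\prod_i E_i\hookrightarrow\tilde{G}$ followed by $\xi$ equals the isomorphism $\prod_i\xi_i|_{E_i}$ up to the identification of the target, so $\xi$ admits a section on the subgroup $\prod_i E_i$ and is therefore surjective. The main obstacle I anticipate is confirming the cross-term vanishing $\xi_i|_{E_j}=0$ for $i\neq j$: one must check both that $E_j$ stabilizes $A_i$ and $X_i$ inducing the identity on the quotient, and that the auxiliary linear form $l$ defining $q'$ on $A_i$ is genuinely unaffected, so that the Dickson invariant reads off trivially on $E_j$. This is the analogue of the observation ``$\varphi_i|_{H_j}$ is trivial for $i\neq j$'' in Theorem~4.1, and it reduces to the block-disjointness of the defining equations of $E_j$ together with the characterization of $e$ as a vector intrinsic to the $i$-th block; once this is in place the surjectivity is immediate.
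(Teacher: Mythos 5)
Your proposal is correct and follows essentially the same route as the paper: it forms the product $E=\prod_i E_i$ of the commuting subgroup schemes with pairwise trivial intersection, notes that $\xi_i|_{E_j}$ is trivial for $i\neq j$, and concludes via the isomorphisms $\xi_i|_{E_i}$ of Lemma~4.5 that $\xi$ restricts to an isomorphism $E\to(\mathbb{Z}/2\mathbb{Z})^{\alpha}$, hence is surjective. Your added discussion of why the cross-terms vanish is a reasonable elaboration of what the paper declares ``obvious.''
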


\begin{proof}
Define the scheme $E$ to be the product of $E_i$'s such that $L_i$ is \textit{free} \textit{of type} $\textit{I}^e_1$.
Notice that $E_i$ and $E_j$  commute with each other in the sense that $e_i\cdot e_j=e_j\cdot e_i$ for all $i \neq j$, where $e_i\in E_i $ and $ e_j\in E_j$
and $L_i$ and $L_j$ are \textit{free} \textit{of type} $I^e_1$, and that $E_i\cap E_j=0$.
Thus $E$ can be embedded into $\tilde{G}$ as a closed subgroup scheme. In addition, it is obvious that $\xi_i|_{E_j}$ is trivial for $i \neq j$.
Therefore $\xi$ induces an isomorphism of algebraic groups from $E$ to $(\mathbb{Z}/2\mathbb{Z})^{\alpha}$ defined over $\kappa$.
This completes the proof.
\end{proof}

     \subsection{The second construction of component groups}

In this subsection, we will construct the morphism $\psi$ from $\tilde{G}$ to $(\mathbb{Z}/2\mathbb{Z})^{\beta}$.
We begin by defining several lattices.
    \begin{Def}
     We define the lattice $L^1$ which is the sublattice of $L$ such that $L^1/2L$ is the kernel of the symmetric bilinear form $\langle-,-\rangle$ mod 2 on $L/2L$.
      Similarly we define the lattice $L^i$ which is the sublattice of $L^{i-1}$ such that $L^i/2L^{i-1}$ is the kernel of
      the symmetric bilinear form $\frac{1}{2^{i-1}}\langle-,-\rangle$ mod 2 on $L^{i-1}/2L^{i-1}$ for all $i\geq 1$.
     For simplicity, put $$L^{0}=L=\bigoplus_{i\geq 0}L_i, 0\leq i < N.$$
     The description of $L^i$ is
      $$L^{2m}=2^m(L_0\oplus L_1)\oplus2^{m-1}(L_2\oplus L_3)\oplus \cdots \oplus 2(L_{2m-2}\oplus L_{2m-1})\oplus \bigoplus_{i\geq 2m}L_i$$
     and
     $$L^{2m-1}=2^{m}L_0 \oplus2^{m-1}(L_1\oplus L_2)\oplus \cdots \oplus 2(L_{2m-3}\oplus L_{2m-2}) \oplus \bigoplus_{i\geq 2m-1}L_i.$$
\end{Def}

     We choose a Jordan splitting for the quadratic lattice $(L^{2m}, \frac{1}{2^{2m}}q)$ as follows:
     $$L^{2m}=\bigoplus_{i \geq 0} M_i,$$ where
     $$M_0=2^mL_0\oplus2^{m-1}L_2\oplus \cdots \oplus 2L_{2m-2}\oplus L_{2m},$$
     $$M_1=2^mL_1\oplus2^{m-1}L_3\oplus \cdots \oplus 2L_{2m-1}\oplus L_{2m+1}$$
     $$\mathrm{and}~ M_k=L_{2m+k} \mathrm{~if~} k\geq 2.$$
     Here, $M_i$ is \textit{modular} and $S(M_i)=(2^i)$.
     For the quadratic lattice $(L^{2m-1}, \frac{1}{2^{2m-1}}q)$, a chosen Jordan splitting is as follows:
     $$L^{2m-1}=\bigoplus_{i \geq 0} M_i,$$ where
     $$M_0=2^{m-1}L_1\oplus2^{m-2}L_3\oplus \cdots \oplus 2L_{2m-3}\oplus L_{2m-1},$$
     $$M_1=2^{m}L_0\oplus2^{m-1}L_2\oplus \cdots \oplus 2L_{2m-2}\oplus L_{2m}$$
     $$\mathrm{and}~ M_{k}=L_{2m-1+k} \mathrm{~if~} k\geq 2.$$

\begin{Def}
We define $C(L)$ to be the sublattice of $L$ such that $$C(L)=\{x\in L \mid \langle x,y\rangle \in (2) \ \ \mathrm{for}\ \ \mathrm{all}\ \ y \in B(L)\}.\\$$
\end{Def}

    We  choose any integer $j$ such that $L_{j}$ is \textit{of type} $\textit{I}$ and $L_{j+2}$ is \textit{of type} $\textit{II}$.
    We stress that $M_0$ is \textit{of type} $\textit{I}$ and $M_2=L_{j+2}$ is \textit{of type} $\textit{II}$.
     We choose a basis $(\langle e_i\rangle, e)$ (resp. $(\langle e_i\rangle, a, e)$) for $M_0$ based on Theorem 2.4 when the rank of $M_0$ is odd (resp. even).
     Then $B(L^{j})$ is spanned by $$(\langle e_i\rangle, 2e) ~(resp.~ (\langle e_i\rangle, 2a, e)) \text{~and~}  M_1 \oplus (\bigoplus_{i\geq 2} M_i)$$
      and $C(L^{j})$ is spanned by $$(\langle 2e_i\rangle, e)~ (resp. ~(\langle 2e_i\rangle, 2a, e)) \text{~and~}  M_1 \oplus (\bigoplus_{i\geq 2} M_i).$$\\

We now construct the morphism $\psi_j : \tilde{G} \rightarrow \mathbb{Z}/2\mathbb{Z}$ as follows
(There are 2 cases depending on whether $M_0$ is \textit{of type} $\textit{I}^e$ or \textit{of type} $\textit{I}^o$.):

\begin{itemize}
\item[(1)] Firstly, we assume that $M_0$ is \textit{of type} $\textit{I}^e$.
We choose a Jordan splitting for the quadratic lattice $(C(L^j), \frac{1}{2^{j+1}}q)$ as follows:
$$C(L^j)=\bigoplus_{i \geq 0} M_i^{\prime}.$$
Notice that $M_1^{\prime}$ is \textit{of type II} so that $M_0^{\prime}$ is \textit{free}.
Let $G_j$ denote the special fiber of the smooth affine group scheme associated to the quadratic lattice $(C(L^j), \frac{1}{2^{j+1}}q)$.
We now have a morphism from $\tilde{G}$ to $G_j$.

 If $M_0^{\prime}$ is \textit{of type II}, \textit{of type} $\textit{I}^o$ or \textit{of type} $\textit{I}^e_2$,
 then we have a morphism from $G_j$ to the even orthogonal group associated to $M_0^{\prime}$
 as explained in Section 4.1.
Thus, the Dickson invariant of this orthogonal group induces the morphism
$$\psi_j : \tilde{G} \longrightarrow \mathbb{Z}/2\mathbb{Z}.$$
If $M_0^{\prime}$ is  \textit{of type} $\textit{I}^e_1$, then  we have a morphism from $G_j$ to $\mathbb{Z}/2\mathbb{Z}$ associated to $M_0^{\prime}$ as explained in Section 4.2.
It induces the morphism
$$\psi_j : \tilde{G} \longrightarrow \mathbb{Z}/2\mathbb{Z}.$$

\item[(2)] We next assume that $M_0$ is \textit{of type} $\textit{I}^o$.
We choose a Jordan splitting for the quadratic lattice $(C(L^j), \frac{1}{2^{j}}q)$ as follows:
$$C(L^j)=\bigoplus_{i \geq 0} M_i^{\prime}.$$
Notice that the rank of the unimodular lattice $M_0^{\prime}$ is 1 and the lattice $M_2^{\prime}$ is \textit{of type II}.
If $G_j$ denotes the special fiber of the smooth affine group scheme associated to the quadratic lattice $(C(L^j), \frac{1}{2^{j}}q)$,
we  have a morphism from $\tilde{G}$ to $G_j$.

We now consider the new quadratic lattice $M_0^{\prime}\oplus C(L^j)$.
The smooth affine group scheme associated to the quadratic lattice $(C(L^j), \frac{1}{2^{j}}q)$ can be embedded into
the smooth affine group scheme associated to the quadratic lattice $M_0^{\prime}\oplus C(L^j)$ as a closed subgroup scheme.
Thus the special fiber $G_j$ of the former group scheme is embedded into the special fiber of the latter group scheme.
Since the unimodular lattice $M_0^{\prime}\oplus M_0^{\prime}$ is \textit{of type} $\textit{I}^e$,
where $(M_0^{\prime}\oplus M_0^{\prime})\oplus \bigoplus_{i \geq 1} M_i^{\prime}$ is a Jordan splitting of the quadratic lattice $M_0^{\prime}\oplus C(L^j)$,
we have a morphism from the special fiber of the latter group scheme to $\mathbb{Z}/2\mathbb{Z}$ as constructed in the first case.
It induces the morphism
$$\psi_j : \tilde{G} \longrightarrow \mathbb{Z}/2\mathbb{Z}.$$

\item[(3)] Combining all cases, we have the morphism $$\psi=\prod_j \psi_j : \tilde{G} \longrightarrow (\mathbb{Z}/2\mathbb{Z})^{\beta},$$
where $\beta$ is the size of the set of $j$'s such that $L_j$ is \textit{of type I} and  $L_{j+2}$ is \textit{of type II}.
\end{itemize}

    \begin{Rmk}
     There is another description for $\beta$. We consider the type sequence $\{a_i\}$ such that $a_i$ is $I$ or $II$ according to the parity type of $L_i$.
     Define two sequences $b_m=a_{2m+1}$ and $c_m=a_{2m}$. Then we  take maximal consecutive terms consisting of $I$ in each $b_m$ or $c_m$.
     The set consisting of these terms  is finite and its size is $\beta$.
     For example, if $$\{a_n\}_{n\geq 0}= \{I ~I~ I~ II~ I~ I~ II~ I~ II~ I~ I\},$$
     then $$\{b_m\}_{m\geq 0} = \{I~ II~ I~ I~ I\}\text{ and }  \{c_m\}_{m\geq 0} = \{I~ I~ I~ II~ II~ I\}.$$
     Hence $\beta$ is $2+2=4$.
     \end{Rmk}

We now have the following result.
\begin{Thm}
 The morphism $$\psi=\prod_j \psi_j : \tilde{G} \longrightarrow (\mathbb{Z}/2\mathbb{Z})^{\beta}$$ is surjective.

Moreover, the morphism $$\varphi \times \xi \times \psi : \tilde{G} \rightarrow \prod_i \mathrm{O}(\bar{V_i}, \bar{q_i})^{\mathrm{red}} \times (\mathbb{Z}/2\mathbb{Z})^{\alpha+\beta}$$
is also surjective.

\end{Thm}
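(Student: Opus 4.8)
The plan is to prove the two assertions in sequence, modeling the argument for $\psi$ on the proof that $\xi$ is surjective and then bootstrapping the combined statement from the surjectivity of $\varphi$ already established. For the first assertion I would exhibit an explicit copy of $(\mathbb{Z}/2\mathbb{Z})^{\beta}$ inside $\tilde{G}$ on which $\psi$ restricts to an isomorphism. For each index $j$ with $L_j$ of type $\textit{I}$ and $L_{j+2}$ of type $\textit{II}$, I would use a closed subgroup scheme $F_j \subset \tilde{G}$ of the kind defined in Section 4.2, isomorphic to $\mathbb{Z}/2\mathbb{Z}$ and supported only on the $L_j$-block. The crucial step is to show that $\psi_j$ restricts to an isomorphism $F_j \xrightarrow{\sim} \mathbb{Z}/2\mathbb{Z}$. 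Since $\psi_j$ is built by transporting to the auxiliary lattice $C(L^j)$, extracting the Jordan component $M_0'$, and composing with the Dickson invariant of the associated even orthogonal group (together with the doubling $M_0' \oplus C(L^j)$ when $M_0$ is of type $\textit{I}^o$), this reduces, exactly as in the lemma showing $\xi_i|_{E_i}$ is an isomorphism, to tracking the single free parameter of $F_j$ through these identifications and verifying it lands in the nontrivial Dickson class; as there, one may first pass to a small-rank model and compute $\widetilde{\det}$ explicitly.

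Next I would check the vanishing of cross terms, namely $\psi_i|_{F_j} = 0$ for $i \neq j$. Because $F_j$ perturbs only the $L_j$-block whereas $\psi_i$ reads off data attached to $C(L^i)$, built from the distinct pair $(L_i, L_{i+2})$, the free parameter of $F_j$ never reaches the $M_0'$-component governing $\psi_i$; this is a bookkeeping comparison of the supports of the two constructions. Combined with the observations that distinct $F_j$ commute and satisfy $F_i \cap F_j = 0$ (both immediate from the block form), it follows that $\prod_j F_j \cong (\mathbb{Z}/2\mathbb{Z})^{\beta}$ embeds in $\tilde{G}$ as a closed subgroup scheme and that $\psi$ carries it isomorphically onto $(\mathbb{Z}/2\mathbb{Z})^{\beta}$. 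This proves the first assertion.

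For the combined map I would reduce to $\mathrm{Ker}\,\varphi$. Since $\varphi$ is surjective and $\xi,\psi$ are homomorphisms into a commutative group, it suffices to show $\xi \times \psi$ restricted to $\mathrm{Ker}\,\varphi$ surjects onto $(\mathbb{Z}/2\mathbb{Z})^{\alpha+\beta}$: given a target $(g,\epsilon)$, lift $g$ to some $h \in \tilde{G}$ via $\varphi$, set $\eta = (\xi\times\psi)(h)$, and correct by multiplying $h$ by an element $k \in \mathrm{Ker}\,\varphi$ with $(\xi\times\psi)(k) = \epsilon - \eta$, so that $\varphi(hk)=g$ and $(\xi\times\psi)(hk) = \epsilon$. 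To supply such $k$ I would use the embedded subgroup $E \times \prod_j F_j \cong (\mathbb{Z}/2\mathbb{Z})^{\alpha+\beta}$, where $E = \prod_i E_i$ is the copy of $(\mathbb{Z}/2\mathbb{Z})^{\alpha}$ from the surjectivity theorem for $\xi$. One verifies that both $E_i$ and $F_j$ lie in $\mathrm{Ker}\,\varphi$ (their defining equations force the induced action on each $\bar{V_i}=B_i/Z_i$ to be trivial, consistently with the computation of $\mathrm{Ker}\,\varphi$ as $\mathbf{A}^l \times (\mathbb{Z}/2\mathbb{Z})^{\alpha+\beta}$), and that the full matrix of restrictions $(\xi_i,\psi_j)$ to $(E_{i'},F_{j'})$ is the identity: the diagonal entries are the isomorphisms already obtained, and the off-diagonal entries $\psi_j|_{E_i}$ and $\xi_i|_{F_j}$ vanish by the same support arguments. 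Hence $\xi\times\psi$ restricts to an isomorphism on this subgroup, giving the required surjectivity onto the component group of $\mathrm{Ker}\,\varphi$ and completing the proof.

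The main obstacle is the first step, verifying that $\psi_j|_{F_j}$ is an isomorphism, because the definition of $\psi_j$ routes through several auxiliary lattices ($L^j$, $C(L^j)$, the Jordan splittings $\{M_i\}$ and $\{M_i'\}$, and the doubling in the $\textit{I}^o$ case), and one must confirm that the single nontrivial generator of $F_j$ survives all of these reductions to hit the nontrivial Dickson class. The remaining cross-term vanishings and the containments $E_i,F_j \subset \mathrm{Ker}\,\varphi$ are comparatively routine once the constructions are placed side by side.
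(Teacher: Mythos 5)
Your proposal is correct and follows essentially the same route as the paper: exhibit $F=\prod_j F_j\cong(\mathbb{Z}/2\mathbb{Z})^{\beta}$ inside $\tilde{G}$ with $\psi_j|_{F_j}$ an isomorphism (reduced, as in the lemma for $\xi_i|_{E_i}$, to an explicit small-rank Dickson-invariant computation) and vanishing cross-terms, then reduce the combined statement to showing $\xi\times\psi|_{\mathrm{Ker}\,\varphi}$ surjects via the product $E\times F$. The only caveat is that $E$ and $F$ need not commute, so $E\times F$ embeds into $\mathrm{Ker}\,\varphi$ only as $\kappa$-schemes rather than as a subgroup scheme; this does not affect your surjectivity argument, since the map $(e,f)\mapsto(\xi(e),\psi(f))$ still supplies the required correcting element.
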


\begin{proof}
We first show that $\psi_j$ is surjective. Note that for such a $j$, $L_j$ is \textit{of type I} and $L_{j+2}$ is \textit{of type II}.
Recall that we have defined the closed subgroup scheme $F_j$ of $\tilde{G}$ at the beginning of Section 4.2 and it is isomorphic to $\mathbb{Z}/2\mathbb{Z}$.
Now it suffices to show that $\psi_j|_{F_j}$ is an isomorphism and its proof is similar to that of Lemma 4.5 so we may skip.

 Surjectivity  of $\psi$ is similar to Theorem 4.6.
Notice that $F_i$ and $F_j$ commute with each other for all $i\neq j$, where $L_i$ and $L_j$ (resp. $L_{i+2}$ and $L_{j+2}$) are \textit{of type I}
(resp. \textit{of type II}), and that $F_i\cap F_j=0$.
Thus the product $F=\prod_j F_j$ is embedded into $\tilde{G}$ as a closed subgroup scheme.
In addition, it is obvious that $\psi_i|_{F_j}$ is trivial for all $i<j$.
Hence the morphism $\psi$ induces an isomorphism of algebraic groups from $F$ to $(\mathbb{Z}/2\mathbb{Z})^{\beta}$ defined over $\kappa$.
This shows  surjectivity  of the morphism $\psi$.

For  surjectivity of $\varphi \times \xi \times \psi$, it suffices to show that $\xi \times \psi|_{\mathrm{Ker~}\varphi}$ is surjective onto $(\mathbb{Z}/2\mathbb{Z})^{\alpha+\beta}$.
Since the morphism $\varphi$ vanishes on $E$ and $F$, the two schemes $E$ and $F$ are subschemes of $\mathrm{Ker~}\varphi$.
Notice that the intersection of $E$ and $F$ as subgroup schemes of $\mathrm{Ker~}\varphi$ is trivial.
This fact implies that the product $E\times F$ is embedded into $\mathrm{Ker~}\varphi$ as $\kappa$-schemes.
Notice that $E$ and $F$ may not commute with each other so  $E \times F$ may not inherit subgroup scheme structure of $\mathrm{Ker~}\varphi$.
It is easily seen from the construction of $E$ and $F$ that the restricted morphisms $\xi|_{F}$ and $\psi|_{E}$ are trivial.
Therefore, $\xi \times \psi$ induces an isomorphism from $E\times F$ to $(\mathbb{Z}/2\mathbb{Z})^{\alpha+\beta}$ as $\kappa$-schemes.
This completes the proof.
\end{proof}

     \subsection{The maximal reductive quotient of $\tilde{G}$}

  Let $\tilde{M}$ be the special fiber of $\underline{M}^{\ast}$. Let
  $$\tilde{M_i}=\mathrm{GL}_{\kappa}(B_i/Y_i).$$
 For any $\kappa$-algebra $R$, let $m=\begin{pmatrix} 2^{max\{0,j-i\}}m_{i,j} \end{pmatrix} \in \tilde{M}(R)$.
 Recall that $s_i$ is a block of $m_{i,i}$ if $L_i$ is \textit{of type I}, as explained in Section 3.1.
 Then $s_i \in \tilde{M_i}(R)$. If $L_i$ is \textit{of type II}, then $m_{i,i} \in \tilde{M_i}(R)$.
 Therefore, we have a surjective morphism of algebraic groups
 $$r : \tilde{M} \longrightarrow \prod\tilde{M}_i,$$
 defined over $\kappa$. We now have the following lemma:

 \begin{Lem}
The kernel of $r$ is the unipotent radical $\tilde{M}^+$ of $\tilde{M}$, and $\prod\tilde{M}_i$ is the maximal reductive quotient of $\tilde{M}$.

\end{Lem}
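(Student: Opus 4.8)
The plan is to deduce both statements from a single fact: $\ker r$ is a smooth, connected, normal unipotent subgroup of $\tilde M$, while the target $\prod_i\tilde M_i=\prod_i\mathrm{GL}_\kappa(B_i/Y_i)$ is reductive, being a product of general linear groups over the perfect field $\kappa$. Normality and the fact that $\ker r$ is a closed subgroup scheme are automatic, since $r$ is a homomorphism of algebraic groups (already established, together with its surjectivity). For smoothness and connectedness I would note that $\tilde M$ is an open subscheme of $\underline M\otimes\kappa\cong\textbf{A}^{n^2}$ and that $\ker r$ is cut out inside it by setting the coordinates recording the reductive blocks — the $s_i$ for $L_i$ of type $I$ and $m_{i,i}$ for $L_i$ of type $II$ — equal to the identity. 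Hence $\ker r$ is open in an affine subspace, so it is irreducible, thus connected and smooth.

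The heart of the argument is the unipotence of $\ker r$, which is where residue characteristic $2$ enters. I would factor it through the reduction homomorphism $\pi:\tilde M\to\mathrm{GL}_\kappa(L/2L)$. By the matrix description of Section 3.1, an entry of a block $m_{i,j}$ with $j>i$ carries the factor $2^{j-i}$ and so vanishes modulo $2$, while each diagonal block reduces modulo $2$ to $\mathrm{diag}(s_i,1,\dots)$ (resp. to $m_{i,i}$ when $L_i$ is of type $II$); thus $\pi(m)$ is block lower triangular. On $\ker r$ these diagonal blocks are the identity, so $\pi(\ker r)$ lies in the lower unitriangular group and is unipotent. Its kernel $U:=\ker(\pi|_{\ker r})$ consists of the infinitesimal coordinates: the upper blocks $m_{i,j}$ with $j>i$ together with the $2\cdot$-entries inside the diagonal blocks. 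Computing with the formal multiplication of Section 5.3 of \cite{GY}, the product of two such $2\cdot$-entries inside one block acquires a factor $4=0$ (so the diagonal infinitesimals add), while the strictly upper blocks compose nilpotently along the Jordan filtration; hence $U$ is connected unipotent. Since an extension of a unipotent group by a unipotent group is unipotent, $\ker r$ is unipotent.

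It then remains to identify $\ker r$ with the unipotent radical. Being a connected normal unipotent subgroup, $\ker r\subseteq R_u(\tilde M)$. Conversely, $r(R_u(\tilde M))$ is a connected normal unipotent subgroup of the reductive group $\prod_i\tilde M_i$, hence trivial, so that $R_u(\tilde M)\subseteq\ker r$. Therefore $\ker r=R_u(\tilde M)=\tilde M^+$, and surjectivity of $r$ yields $\tilde M/\tilde M^+\cong\prod_i\tilde M_i$, which is consequently the maximal reductive quotient of $\tilde M$.

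I expect the main obstacle to be the unipotence step, and specifically the $p=2$ bookkeeping hidden in it: one must verify through the formal multiplication of Section 5.3 of \cite{GY} that the $2\cdot$-coordinates (which exist precisely because $p=2$) and the off-diagonal blocks combine in a strictly triangular fashion, so that no semisimple direction survives outside the blocks $s_i$ read off by $r$. Keeping the quadratic conditions defining $B_i,Y_i,Z_i$ and the type-dependent constraints of Section 3.1 compatible with the upper/lower splitting induced by $\pi$ is the part that demands the most care.
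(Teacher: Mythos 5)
The paper states this lemma without any proof (it is the residue-characteristic-$2$ analogue of the corresponding statement in Section 6 of \cite{GY}), so there is nothing of the author's to compare your argument against; I judge it on its own terms. Your overall strategy is the right one: reducing both assertions to ``$\ker r$ is a smooth, connected, normal, unipotent subgroup and $\prod_i\mathrm{GL}_\kappa(B_i/Y_i)$ is reductive'' and then running the two inclusions $\ker r\subseteq R_u(\tilde M)$ and $r(R_u(\tilde M))=1$ is exactly what is needed, and those final steps are correct. The connectedness and smoothness claim can even be sharpened: once the $s_i$ (resp.\ $m_{i,i}$ for type $\textit{II}$) are set to the identity, the determinant $d$ reduces modulo $2$ to $\prod_i\det s_i\cdot\prod_i\det m_{i,i}=1$, so $\ker r$ is not merely open in an affine subspace but equal to one.

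Two things in the unipotence step need repair. First, a factual slip: for $L_i$ of type $\textit{I}^e$ the diagonal block reduces modulo $2$ to $\bigl(\begin{smallmatrix} s_i&r_i&0\\0&1&0\\v_i&u_i&1\end{smallmatrix}\bigr)$, not to $\mathrm{diag}(s_i,1,1)$, so $\pi(\ker r)$ is \emph{not} contained in the lower unitriangular group in the given basis. It is still contained in a unipotent subgroup --- after moving the last basis vector of each type-$\textit{I}^e$ block to the front of that block, these matrices preserve a complete flag and act trivially on its graded pieces --- but this must be said, and your identification of the coordinates of $U$ silently depends on it. Second, and more seriously, the unipotence of $U$ (hence of $\ker r$) is asserted rather than proved: ``the strictly upper blocks compose nilpotently along the Jordan filtration'' is the statement to be established, not an argument. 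Two ways to close the gap: (i) invoke Lazard's theorem, since $\ker r$ is a smooth connected affine group whose underlying variety is an affine space; or (ii) use Remark 3.3 to write $\ker r=1+\mathcal N$, where $\mathcal N\subseteq\underline M'\otimes\kappa$ is the linear subspace on which the $s_i$-parts vanish, and check from the formal multiplication that $\mathcal N$ is closed under addition and multiplication and satisfies $\mathcal N^k=0$ for large $k$ (only indices $k$ between $i$ and $j$ survive modulo $2$ in the $(i,j)$-entry of a product, so the distance from the diagonal is additive, while products of the $2\cdot$-entries acquire a factor $4\equiv 0$); then $1+\mathcal N\supseteq 1+\mathcal N^2\supseteq\cdots$ is a normal filtration with vector-group quotients. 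With either of these supplied, the proof is complete.
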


     We finally have the structural theorem for the algebraic group $\tilde{G}$.

     \begin{Thm}
     The morphism $$\varphi \times \xi \times \psi : \tilde{G} \longrightarrow  \prod_i \mathrm{O}(\bar{V_i}, \bar{q_i})^{\mathrm{red}} \times (\mathbb{Z}/2\mathbb{Z})^{\alpha+\beta}$$
      is surjective and the kernel is  unipotent and connected.
     Consequently, $\prod_i \mathrm{O}(\bar{V_i}, \bar{q_i})^{\mathrm{red}} \times (\mathbb{Z}/2\mathbb{Z})^{\alpha+\beta}$ is the maximal reductive quotient.
     Here, $\mathrm{O}(\bar{V_i}, \bar{q_i})^{\mathrm{red}}$ is explained in Section 4.1 (especially, Remark 4.3), and
     $\alpha$ and $\beta$ are defined in Lemma 4.2.
     \end{Thm}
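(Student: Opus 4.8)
The surjectivity of $\varphi \times \xi \times \psi$ is already in hand (Theorem 4.10), so the real content is that $\mathrm{Ker}(\varphi \times \xi \times \psi)$ is connected and unipotent; the identification of the maximal reductive quotient is then formal. The plan is to first cut the problem down to $\mathrm{Ker~}\varphi$, peel off its component group with $\xi \times \psi$ to get connectedness, and then locate all of $\mathrm{Ker~}\varphi$ inside the unipotent radical $\tilde{M}^{+}$ of $\tilde{M}$ to get unipotence.

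For connectedness I would write $\mathrm{Ker}(\varphi\times\xi\times\psi)=\mathrm{Ker}\big(\xi\times\psi|_{\mathrm{Ker~}\varphi}\big)$ and invoke Lemma 4.2, which gives $\mathrm{Ker~}\varphi\cong\textbf{A}^{l}\times(\mathbb{Z}/2\mathbb{Z})^{\alpha+\beta}$ as $\kappa$-varieties; thus $\pi_{0}(\mathrm{Ker~}\varphi)=(\mathbb{Z}/2\mathbb{Z})^{\alpha+\beta}$ with identity component $\textbf{A}^{l}$. Since the target $(\mathbb{Z}/2\mathbb{Z})^{\alpha+\beta}$ is a finite constant group scheme, the morphism $\xi\times\psi$ is constant on connected components, so it factors through $\pi_{0}(\mathrm{Ker~}\varphi)$. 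In the course of proving Theorem 4.10 the restriction $\xi\times\psi|_{\mathrm{Ker~}\varphi}$ was shown to be surjective onto $(\mathbb{Z}/2\mathbb{Z})^{\alpha+\beta}$ (through the subscheme $E\times F$), so the induced map $(\mathbb{Z}/2\mathbb{Z})^{\alpha+\beta}\to(\mathbb{Z}/2\mathbb{Z})^{\alpha+\beta}$ is a surjective endomorphism of a finite group, hence an isomorphism. Therefore the kernel of $\xi\times\psi|_{\mathrm{Ker~}\varphi}$ is exactly the identity component $(\mathrm{Ker~}\varphi)^{0}\cong\textbf{A}^{l}$, which is connected.

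For unipotence I would prove the stronger inclusion $\mathrm{Ker~}\varphi\subseteq\tilde{M}^{+}$. By Lemma 4.11 one has $\tilde{M}^{+}=\mathrm{Ker}(r)$, where $r:\tilde{M}\to\prod_{i}\tilde{M}_{i}=\prod_{i}\mathrm{GL}_{\kappa}(B_{i}/Y_{i})$ records the induced action on each $B_{i}/Y_{i}$, and $\tilde{M}^{+}$ is the unipotent radical of $\tilde{M}$. If $g\in\mathrm{Ker~}\varphi$, then $\varphi_{i}(g)=\mathrm{id}$, i.e. $g$ acts trivially on $\bar{V_{i}}=B_{i}/Z_{i}$ for every $i$. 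As $Z_{i}\subseteq Y_{i}$, the space $B_{i}/Y_{i}$ is a quotient of $B_{i}/Z_{i}$, so $g$ acts trivially on $B_{i}/Y_{i}$ as well; hence $r(g)=\mathrm{id}$ and $g\in\tilde{M}^{+}$. Being a closed subgroup of the unipotent group $\tilde{M}^{+}$, the whole of $\mathrm{Ker~}\varphi$ is unipotent, and in particular so is its identity component $\mathrm{Ker}(\varphi\times\xi\times\psi)=(\mathrm{Ker~}\varphi)^{0}$. Writing $K=\mathrm{Ker}(\varphi\times\xi\times\psi)$, it is now connected, normal and unipotent, hence contained in $R_{u}(\tilde{G}^{0})$, the unipotent radical of the identity component; conversely, restricting $\varphi\times\xi\times\psi$ to $\tilde{G}^{0}$ yields a surjection onto the reductive identity component of the target, under which $R_{u}(\tilde{G}^{0})$ must die, so $R_{u}(\tilde{G}^{0})\subseteq K$. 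The two inclusions give $K=R_{u}(\tilde{G}^{0})$, and therefore $\prod_{i}\mathrm{O}(\bar{V_{i}},\bar{q_{i}})^{\mathrm{red}}\times(\mathbb{Z}/2\mathbb{Z})^{\alpha+\beta}\cong\tilde{G}/R_{u}(\tilde{G}^{0})$ is the maximal reductive quotient.

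The step I expect to be most delicate is the connectedness argument, since it hinges on matching the order $2^{\alpha+\beta}$ of the component group $\pi_{0}(\mathrm{Ker~}\varphi)$ computed in Lemma 4.2 with the rank $\alpha+\beta$ of the target of $\xi\times\psi$; it is precisely the equality of these two numbers that promotes surjectivity to an isomorphism on $\pi_{0}$ and pins the kernel to the identity component. By contrast, the unipotence inclusion $\mathrm{Ker~}\varphi\subseteq\tilde{M}^{+}$ is essentially formal once one notes that a trivial action on $B_{i}/Z_{i}$ forces a trivial action on the quotient $B_{i}/Y_{i}$, and the final passage to the maximal reductive quotient is a standard two-sided inclusion argument.
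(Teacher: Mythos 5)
Your proposal is correct and follows essentially the same route as the paper: surjectivity is quoted from Theorem 4.10, unipotence comes from the inclusion $\mathrm{Ker~}\varphi\subseteq\tilde{M}^{+}$, and connectedness follows by matching the $2^{\alpha+\beta}$ components of $\mathrm{Ker~}\varphi$ from Lemma 4.2 against the surjectivity of $\xi\times\psi|_{\mathrm{Ker~}\varphi}$ onto $(\mathbb{Z}/2\mathbb{Z})^{\alpha+\beta}$. You merely make explicit two points the paper leaves implicit (the justification $Z_i\subseteq Y_i$ for $\mathrm{Ker~}\varphi\subseteq\mathrm{Ker}(r)=\tilde{M}^{+}$, and the two-sided inclusion identifying the kernel with the unipotent radical), both of which are correct.
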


\begin{proof}
     We only need to prove that the kernel is unipotent and connected.
     Since the kernel of $\varphi$ is a closed subgroup scheme of the unipotent group $\tilde{M}^+$,
     it suffices to show that the kernel of $\varphi \times \xi \times \psi$ is connected.
    Equivalently, it suffices to show that the kernel of the restricted morphism $\xi \times \psi|_{\mathrm{Ker~}\varphi}$ is connected.
     From Lemma 4.2,
     $\mathrm{Ker~}\varphi \cong \textbf{A}^{l}\times (\mathbb{Z}/2\mathbb{Z})^{\alpha+\beta}$ as $\kappa$-varieties.
 Since the restricted morphism $\xi \times \psi|_{\mathrm{Ker~}\varphi}$ is surjective onto $(\mathbb{Z}/2\mathbb{Z})^{\alpha+\beta}$,
 we complete the proof by counting the number of connected components.
\end{proof}

    \begin{Rmk}
    Recall that $\alpha$ is the number of $i$'s such that $L_i$ is \textit{free} \textit{of type} $\textit{I}^e_1$.
    For such a lattice, Im$~\varphi_i$ is $\mathrm{SO}(n_i-1)$ which is connected.
    If a lattice is \textit{free} with nontrivial $\bar{V_i}$
    but not \textit{of type} $\textit{I}^e_1$, then Im$~\varphi_i$ is disconnected with two connected components.
    Therefore the maximal reductive quotient $$\prod \mathrm{O}(\bar{V_i}, \bar{q_i})^{\mathrm{red}} \times (\mathbb{Z}/2\mathbb{Z})^{\alpha+\beta}$$
    is isomorphic to
    $$\prod \mathrm{SO}(\bar{V_i}, \bar{q_i})\times (\mathbb{Z}/2\mathbb{Z})^{\alpha^{\prime}+\beta},$$
    as $\kappa$-varieties, where $\alpha^{\prime}$ is the number of $i$'s such that $L_i$ is \textit{free} with nontrivial $\bar{V_i}$.
    \end{Rmk}

\section{Comparison of volume forms and final formulas}

This section is based on Section 7 of \cite{GY}.
In  the construction of Section 3.2 of \cite{GY}, pick $\omega^{\prime}_M$ and $\omega^{\prime}_Q$ to be such that
$$\int_{\underline{M}(A)}|\omega^{\prime}_M|=1 \mathrm{~and~}  \int_{\underline{Q}(A)}|\omega^{\prime}_Q|=1. $$
Put $\omega^{\mathrm{can}}=\omega^{\prime}_M/\rho^{\ast}\omega^{\prime}_Q$.
By Theorem 3.5, we have an exact sequence of locally free sheaves on $\underline{M}^{\ast}$:
\[ 0\longrightarrow \rho^{\ast}\Omega_{\underline{Q}/A} \longrightarrow \Omega_{\underline{M}^{\ast}/A}
\longrightarrow \Omega_{\underline{M}^{\ast}/\underline{Q}} \longrightarrow 0. \]
It follows that $\omega^{\mathrm{can}}$ is of the type discussed in Section 3 of \cite{GY}.

\begin{Lem} Let $\pi$ be a uniformizer of $A$. Then
  $$\omega_M=\pi^{N_M}\omega_M^{\prime}, \ \ \ \  N_M=\sum_{L_i:\textit{type I}}(2n_i-1)+\sum_{i<j}(j-i)\cdot n_i\cdot n_j+2b,$$
  $$\omega_Q=\pi^{N_Q}\omega_Q^{\prime},  \ \ \ \ N_Q=\sum_{L_i:\textit{type I}}2n_i+\sum_{i<j}j\cdot n_i\cdot n_j+\sum_i d_i+b+c,$$
  $$\omega^{\mathrm{ld}}=\pi^{N_M-N_Q}\omega^{\mathrm{can}}.$$
  Here
  \begin{itemize}
  \item $b$ is the total number of pairs of adjacent constituents $L_i$ and $L_{i+1}$ that are both \textit{of type I}.
  $($b is denoted by $n(\textit{I},\textit{I})$ in \cite{CS}.$)$
  \item $c$ is the sum of dimensions of all nonempty Jordan constituents $L_i$'s that are \textit{of type} $\textit{II}$.
  $($c is denoted by  $n(\textit{II})$  in \cite{CS}.$)$
  \item $d_i=i\cdot n_i\cdot (n_i+1)/2$.
  \end{itemize}
\end{Lem}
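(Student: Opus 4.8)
The plan is to read off $N_M$ and $N_Q$ as $\pi$-adic \emph{measure defects}. Recall from Section 7 of \cite{GY} that $\omega_M$ and $\omega_Q$ are the standard volume forms written in the entry coordinates of $\mathrm{End}_F(V)$ and of the space of symmetric forms on $V$ determined by the fixed basis of $L$ from Theorem 2.4, and that $\omega^{\mathrm{ld}}=\omega_M/\rho^{\ast}\omega_Q$. Normalize the associated Haar measures so that the standard integral lattices $\mathrm{M}_n(A)$ and the symmetric matrices over $A$ carry mass $1$. Since $\omega_M=\pi^{N_M}\omega^{\prime}_M$ with $\int_{\underline{M}(A)}|\omega^{\prime}_M|=1$, integrating gives $\int_{\underline{M}(A)}|\omega_M|=q^{-N_M}$ (with $q=|\kappa|$), so that $q^{-N_M}$ is exactly the mass of $\underline{M}(A)$ inside $\mathrm{M}_n(A)$; the same applies to $N_Q$ and the symmetric matrices. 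Granting the two formulas, the last identity is immediate: substituting $\omega_M=\pi^{N_M}\omega^{\prime}_M$ and $\omega_Q=\pi^{N_Q}\omega^{\prime}_Q$ into $\omega^{\mathrm{ld}}=\omega_M/\rho^{\ast}\omega_Q$ and using $\omega^{\mathrm{can}}=\omega^{\prime}_M/\rho^{\ast}\omega^{\prime}_Q$ together with the fact that $\pi$ is a scalar yields $\omega^{\mathrm{ld}}=\pi^{N_M-N_Q}\omega^{\mathrm{can}}$.

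For $N_M$ I would compute the mass of $\underline{M}(A)$ directly from the matrix shape in Section 3.1--3.2, splitting the defect into three disjoint contributions. The off-diagonal block scaling $g_{i,j}=2^{\max\{0,j-i\}}m_{i,j}$ forces the $n_in_j$ entries of each upper block $(i,j)$, $i<j$, into $2^{j-i}A$, contributing $\sum_{i<j}(j-i)n_in_j$. The diagonal constraints of Section 3.1(2), that $g$ induce the identity on $A_i/B_i$ and $W_i/X_i$, cut the mass of $g_{i,i}=m_{i,i}$: counting the entries forced into $(2)$ or into $1+(2)$ in the displayed shapes shows that a type $\textit{I}^o$ block and a type $\textit{I}^e$ block each lose mass $q^{-(2n_i-1)}$, while a type $\textit{II}$ diagonal block is unconstrained, contributing $\sum_{L_i:\textit{type I}}(2n_i-1)$. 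Finally condition Section 3.1(5) forces, for each adjacent pair of type $\textit{I}$ constituents, one entry of $m_{i,i+1}$ and one entry of $m_{i+1,i}$ into $(2)$, contributing $q^{-2}$ per pair, i.e. the term $2b$. Adding the three gives the stated $N_M$.

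The computation of $N_Q$ is identical in spirit, applied to the symmetric matrices $(2^{\max\{i,j\}}f_{i,j})$ and the conditions (1)--(5) of Section 3.3. The block scaling now contributes $\sum_i d_i$ from the diagonal blocks, each carrying $n_i(n_i+1)/2$ independent symmetric coordinates scaled by $2^i$ (so $d_i=i\,n_i(n_i+1)/2$), and $\sum_{i<j}j\,n_in_j$ from the upper off-diagonal blocks. The type $\textit{I}$ shapes in Section 3.3(2)--(3) impose the recorded congruences — the diagonals of $a_i$ and $d_i$ and the columns $2b_i$, $2e_i$ lie in $(2)$, while the entries $\epsilon+4c_i$, $1+2x_i$, $1+2c_i$, $2\gamma_i+4f_i$ lie in prescribed cosets of $(2)$ or $(4)$ — and a direct count gives a loss of mass $q^{-2n_i}$ for both type $\textit{I}^o$ and type $\textit{I}^e$ blocks, hence $\sum_{L_i:\textit{type I}}2n_i$. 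Condition Section 3.3(4) puts the $n_i$ diagonal entries of each type $\textit{II}$ block into $(2)$, giving $c=\sum_{L_i:\textit{type II}}n_i$, and Section 3.3(5) gives one additional $(2)$-constraint per adjacent type $\textit{I}$ pair, giving $b$. Summing yields the stated $N_Q$.

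The individual counts are routine; the real care — and the main place an error can slip in — lies in keeping the contributions mutually exclusive, so that each independent coordinate is charged \emph{either} to the uniform block scaling \emph{or} to a finer congruence but never to both, and, in the symmetric case, in counting each off-diagonal coordinate once rather than twice. The safeguard I would use is to record, block by block, the precise coset of some $(2^k)$ in which each independent coordinate ranges and then to invoke translation invariance of Haar measure: this makes the unit constants $\epsilon$ and $\gamma_i$ irrelevant and forces every coset of $(2^k)$ to contribute exactly $q^{-k}$. As a consistency check I would run the unimodular cases of the Example in Section 2: for a type $\textit{II}$ unimodular lattice of rank $n$ one finds $N_M=0$ and $N_Q=n$, so $\omega^{\mathrm{ld}}=\pi^{-n}\omega^{\mathrm{can}}$, matching the expected normalization and the $2^n$ discrepancy noted by Conway and Sloane in \cite{CS}.
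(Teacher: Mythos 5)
Your proposal is correct: the paper states this lemma without proof (deferring to the analogous computation in Section 7 of \cite{GY}), and your argument carries out exactly the intended computation, namely reading off $N_M$ and $N_Q$ as the $\pi$-adic valuation of the Jacobian of the affine-linear change of coordinates between the standard matrix entries and the free coordinates of $\underline{M}$ and $\underline{Q}$ given in Sections 3.1--3.3, with the block scalings, the type-$\textit{I}$ diagonal shapes, the type-$\textit{II}$ diagonal congruences, and the adjacent type-$\textit{I}$ conditions contributing the four groups of terms (and with the factor $2b$ versus $b$ correctly explained by symmetry of $\underline{Q}$). The individual counts ($2n_i-1$ and $2n_i$ per type-$\textit{I}$ constituent, $n_i$ per type-$\textit{II}$ constituent, and the block-scaling sums) all check out, and the final identity $\omega^{\mathrm{ld}}=\pi^{N_M-N_Q}\omega^{\mathrm{can}}$ follows formally as you say.
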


\begin{Thm}
Let $f$ be the cardinality of $\kappa$.
The local density of ($L,q$) is

$$\beta_L=\frac{1}{[\mathrm{O}(V, q):\mathrm{SO}(V, q)]}f^N \cdot f^{-\mathrm{dim~} \mathrm{O}(V, q)} \sharp\tilde{G}(\kappa),$$

where $N=N_Q-N_M=t+\sum_{i<j} i\cdot n_i\cdot n_j+\sum_i d_i-b+c$, $t$ = the total number of $L_i$'s that are \textit{of type I}.
Here, $\sharp\tilde{G}(\kappa)$ can be computed based on Remark 5.3.(1) and Theorem 4.12.
\end{Thm}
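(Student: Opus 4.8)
The plan is to rewrite $\beta_L$ as a single integral of $\omega^{\mathrm{ld}}$ over the compact open group $\underline{G}(A)=\mathrm{Aut}_A(L,q)$ and then evaluate that integral by reducing it to a point count on the special fiber $\tilde{G}$. The first input I would invoke is the general comparison of Section 3 of \cite{GY}: because $\rho:\underline{M}^{\ast}\to\underline{Q}$ is smooth with scheme-theoretic fiber $\underline{G}$ over $q$ (Theorem 3.5 and Theorem 3.7), and because $\omega^{\prime}_M,\omega^{\prime}_Q$ are normalized by $\int_{\underline{M}(A)}|\omega^{\prime}_M|=\int_{\underline{Q}(A)}|\omega^{\prime}_Q|=1$, the quotient $\omega^{\mathrm{can}}=\omega^{\prime}_M/\rho^{\ast}\omega^{\prime}_Q$ is a canonical gauge form on $\underline{G}$ and the limit defining the local density is identified with the resulting measure. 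This identification carries the factor recording the index of $\mathrm{SO}$ in $\mathrm{O}$ (which equals $2$ in the nondegenerate case), so that
\[\beta_L=\frac{1}{[\mathrm{O}(V,q):\mathrm{SO}(V,q)]}\int_{\underline{G}(A)}|\omega^{\mathrm{ld}}|.\]

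Next I would apply the standard volume computation for smooth schemes over a local ring (as in Section 3 of \cite{GY}) to the smooth affine $A$-scheme $\underline{G}$. The point, already recorded in the preamble to this section through the exact sequence coming from Theorem 3.5, is that $\omega^{\mathrm{can}}$ generates the top exterior power of the invariant differentials over $A$; hence the reduction map $\underline{G}(A)\to\tilde{G}(\kappa)$ is surjective with fibers of equal volume $f^{-\dim\tilde{G}}$, and
\[\int_{\underline{G}(A)}|\omega^{\mathrm{can}}|=\frac{\sharp\tilde{G}(\kappa)}{f^{\dim\tilde{G}}}.\]
Since $\underline{G}$ is smooth over $A$ by Theorem 3.7, its special and generic fibers have the same dimension, and the generic fiber is $\mathrm{O}(V,q)$ by construction; thus $\dim\tilde{G}=\dim\mathrm{O}(V,q)$.

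I would then feed in Lemma 5.1. From $\omega^{\mathrm{ld}}=\pi^{N_M-N_Q}\omega^{\mathrm{can}}$ and $|\pi|=f^{-1}$, setting $N=N_Q-N_M$ gives $|\omega^{\mathrm{ld}}|=f^{N}|\omega^{\mathrm{can}}|$, whence $\int_{\underline{G}(A)}|\omega^{\mathrm{ld}}|=f^{N}f^{-\dim\mathrm{O}(V,q)}\sharp\tilde{G}(\kappa)$; combining with the bridge above yields the stated formula for $\beta_L$. The closed form of $N$ is then a direct subtraction of the two expressions in Lemma 5.1: the type-I terms give $\sum_{L_i:\textit{type I}}\bigl(2n_i-(2n_i-1)\bigr)=t$, the off-diagonal terms give $\sum_{i<j}\bigl(j-(j-i)\bigr)n_in_j=\sum_{i<j}i\,n_in_j$, the summands $\sum_i d_i$ and $c$ persist from $N_Q$, and the two occurrences of $b$ combine as $b-2b=-b$, so $N=t+\sum_{i<j}i\,n_in_j+\sum_i d_i-b+c$.

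The main obstacle is not the computation of $N$, which is bookkeeping, but the two conceptual inputs in the first two paragraphs: verifying that $\omega^{\mathrm{can}}$ is a genuine canonical generator so that the point-counting formula applies with no extra local correction, and matching the normalization conventions of \cite{GY} closely enough to be certain that the comparison produces exactly the factor $1/[\mathrm{O}(V,q):\mathrm{SO}(V,q)]$ rather than $1$ or $\frac{1}{2}$ by accident. Once those conventions are aligned, the remaining manipulations are only the tracking of $\pi$-powers and $f$-powers.
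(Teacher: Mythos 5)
Your proposal follows exactly the route the paper intends: Section 5 sets up $\omega^{\mathrm{can}}=\omega'_M/\rho^{\ast}\omega'_Q$ and the exact sequence from Theorem 3.5 precisely so that the point-counting formula $\int_{\underline{G}(A)}|\omega^{\mathrm{can}}|=f^{-\dim \mathrm{O}(V,q)}\sharp\tilde{G}(\kappa)$ and the comparison $\omega^{\mathrm{ld}}=\pi^{N_M-N_Q}\omega^{\mathrm{can}}$ of Lemma 5.1 can be fed into the general machinery of Section 7 of \cite{GY}, which is all the paper offers by way of proof. Your arithmetic for $N=N_Q-N_M$ is also correct, so the proposal is a faithful (indeed more explicit) reconstruction of the paper's argument.
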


\begin{Rmk}

\begin{enumerate}
\item In the above local density formula, $\sharp\tilde{G}(\kappa)$ is  computed as follows.
We denote by $R_u\tilde{G}$  the unipotent radical of $\tilde{G}$ so that the  maximal reductive quotient of $\tilde{G}$ is $\tilde{G}/R_u\tilde{G}$.
That is, there is the following exact sequence of group schemes over $\kappa$:
\[1 \longrightarrow R_u\tilde{G} \longrightarrow \tilde{G} \longrightarrow \tilde{G}/R_u\tilde{G} \longrightarrow 1. \]
Furthermore, the following sequence of groups
\[1 \longrightarrow R_u\tilde{G}(\kappa) \longrightarrow \tilde{G}(\kappa) \longrightarrow (\tilde{G}/R_u\tilde{G})(\kappa) \longrightarrow 1. \]
is also exact by Lemma 6.3.3 in \cite{GY}.
Lemma 6.3.3 in \cite{GY} also induces that  $\sharp R_u\tilde{G}(\kappa)$ is $f^m$, where $m$ is the dimension of $R_u\tilde{G}$.
Notice that  the dimension of $R_u\tilde{G}$ can be computed explicitly based on Theorem 4.12 or Remark 4.13,
since the dimension of $\tilde{G}$ is $\frac{n(n-1)}{2}$ with $n=\mathrm{rank}_{A}L$. 
In addition, the order of an orthogonal group  defined over a finite field is well known.
Thus, one can compute $\sharp (\tilde{G}/R_u\tilde{G})(\kappa)$ explicitly based on Theorem 4.12 or Remark 4.13.
Finally, the order of the group $\tilde{G}(\kappa)$ is identified as follows:
\[\sharp\tilde{G}(\kappa)=\sharp R_u\tilde{G}(\kappa)\cdot \sharp (\tilde{G}/R_u\tilde{G})(\kappa).\]

\item As  in Remark 7.4 of \cite{GY}, although we have assumed that $n_i=0$ for $i<0$,
it is easy to check that the formula in the preceding theorem remains true without this assumption.
\end{enumerate}
\end{Rmk}

\section{The mass formula for $Q_n(x_1, \cdots, x_n)=x_1^2 + \cdots + x_n^2$}

  Let us apply the local density formula to obtain the mass formula for the integral quadratic form
 $$Q_n(x_1, \cdots, x_n)=x_1^2 + \cdots + x_n^2.$$
 As we are working globally, we differ from our previous notation at times.
 Let $k$ be a totally real number field of degree $d$ over $\mathbb{Q}$ and $R$ be its ring of integers.
Assume that the ideal $(2)$ is unramified over $R$.
For a place $v$ of $k$, we let $k_v$ be the corresponding completion of $k$.
For a finite place $v$ of $k$, let $\kappa_v$ denote the residue field of the completion of $k$ at $v$,
and $q_v$ be the cardinality of $\kappa_v$.
We consider the quadratic $R$-lattice $(L, Q)$ such that $Q_n(x_1, \cdots, x_n)=x_1^2 + \cdots + x_n^2$.
The paper \cite{GY} of Gan and Yu includes a complete discussion of the Smith-Minkowski-Siegel mass formula.
Applying it to the quadratic lattice $(L, Q)$, we have the following formula:

\begin{Prop}[(\cite{GY}, Theorem 10.20)]
$$\mathrm{Mass}(L, Q_n)=c(L)\cdot \frac{d_k^{n(n-1)/4}}{\prod_{v~finite}\beta_{L_v}}.$$
Here, $c(L)=(\lambda^{-1}\mu)^d  $, $d_k$ is the discriminant of $k$ over $\mathbb{Q}$,
and $\beta_{L_v}$ is the local density associated to the quadratic lattice $L_v$.\\
$\lambda=\prod_i\frac{(2\pi)^{d_i}}{(d_i-1)!}$,  $d_i$'s run over the degrees of $G$.\\
   $\mu=2^n (\mathrm{resp.~} 2^{(n+1)/2})$ if $n$ is even (resp. $n$ is odd).
\end{Prop}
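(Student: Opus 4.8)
The plan is to obtain the formula by specializing the adelic Smith--Minkowski--Siegel mass formula, in the Tamagawa-measure form developed in \cite{GY}, to the positive definite lattice $(L, Q_n)$ over the totally real field $k$. The input is the theorem that the Tamagawa number of $G = \mathrm{SO}(V, Q_n)$ equals $2$, which recasts $\mathrm{Mass}(L, Q_n)$ as $\tau(G)$ divided by a product of local volumes of the automorphism groups $\mathrm{Aut}_{R_v}(L_v, Q_n)$ computed against one fixed invariant gauge form $\omega$ on $G$ defined over $k$. First I would fix such an $\omega$ and apply the product formula: the global Tamagawa measure differs from the product of the local measures $|\omega|_v$ by the factor $|d_k|^{-\dim G/2}$. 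Since $\dim G = n(n-1)/2$, this is exactly the origin of the numerator $d_k^{n(n-1)/4}$.

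Next I would evaluate the non-archimedean contribution. At each finite place $v$, the reconciliation of $\omega$ with the Kneser--Tamagawa--Weil volume form $\omega^{\mathrm{ld}}$, carried out in Section 5, identifies the local integral $\int_{\mathrm{Aut}_{R_v}(L_v, Q_n)} |\omega|_v$ with the local density $\beta_{L_v}$; these assemble into the denominator $\prod_{v \text{ finite}} \beta_{L_v}$. The factor of $2$ supplied by $\tau(G) = 2$ is matched against the index $[\mathrm{O}(V,Q_n) : \mathrm{SO}(V,Q_n)] = 2$ already built into the local density formula of Section 5, so that no spurious power of $2$ survives in the final expression. At the places dividing $2$, $\beta_{L_v}$ is the value produced by the explicit formula of Section 5, while at odd finite places it is the classical value, so the product is completely explicit.

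There remains the archimedean factor $c(L)$. Because $k$ is totally real of degree $d$ and $Q_n$ is positive definite, at each of the $d$ real places the group $\mathrm{O}(n, \mathbb{R})$ is compact, and the corresponding local factor is the volume $\mathrm{vol}(\mathrm{O}(n,\mathbb{R}), |\omega|_\infty)$. I would compute this volume via the standard realization of the orthogonal group as an iterated sphere bundle, writing it as a product of $\mathrm{vol}(S^{j}) = 2\pi^{(j+1)/2}/\Gamma((j+1)/2)$, and then regroup the result into the shape $\prod_i (2\pi)^{d_i}/(d_i - 1)!$ over the degrees $d_i$ of $G$, absorbing the leftover powers of $2$. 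This identifies $\lambda$ and, since the $d$ real places contribute identically, yields $c(L) = (\lambda^{-1}\mu)^d$.

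The main obstacle is pinning down the exact power of two $\mu = 2^n$ (for even $n$) or $2^{(n+1)/2}$ (for odd $n$). This exponent records two normalization issues that must be tracked together at the archimedean places: the discrepancy between the gauge form attached to the quadratic form $Q_n$ and the invariant form on $G$ under which $\omega^{\mathrm{ld}}$, and hence $\beta_{L_v}$, is normalized---a discrepancy governed by the factor $\tfrac12$ in the bilinear form $\langle -, - \rangle_q$ and therefore sensitive to $\dim V = n$---together with the passage from $\mathrm{O}$ to $\mathrm{SO}$, whose effect depends on whether $-\mathrm{id}$ lies in $\mathrm{SO}$, i.e.\ on the parity of $n$. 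Matching these conventions so that the surviving powers of $2$ combine into the single clean exponent $\mu$, with the correct even/odd split, is the step demanding the most care; everything else is the bookkeeping of sphere volumes and the product formula.
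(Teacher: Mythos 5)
The paper offers no proof of this proposition: it is quoted verbatim from Theorem~10.20 of \cite{GY}, the preceding sentence making clear that the author is simply invoking Gan--Yu's adelic treatment of the Smith--Minkowski--Siegel mass formula. Your sketch is a faithful reconstruction of exactly that argument --- $\tau(\mathrm{SO}(V,Q_n))=2$, the $|d_k|^{-\dim G/2}$ discrepancy in the product formula producing $d_k^{n(n-1)/4}$, the identification of the finite local volumes with the densities $\beta_{L_v}$, and the archimedean volumes of the compact orthogonal groups at the $d$ real places regrouped via the degrees of $G$ into $\lambda$ and $\mu$ --- so the approach is correct and is essentially the one of the cited source; the only caveat is the one you already flag, namely that the exact value of $\mu$ and the distribution of the factors of $2$ between $\tau(G)$, the $\mathrm{O}/\mathrm{SO}$ indices, and the normalization of $\omega^{\mathrm{ld}}$ must be taken from the careful bookkeeping in \cite{GY}, since this paper supplies none of it.
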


We define $\mathcal{I}_1=\{v : \text{$2|q_v$ and $[\kappa_v : F_2]$ is odd}\}$
and $\mathcal{I}_2=\{v : \text{$2|q_v$ and $[\kappa_v : F_2]$ is even}\}$, where $F_q$ is the finite field with $q$ elements.
Let $\mathcal{I}=\mathcal{I}_1 \cup \mathcal{I}_2$.\\
Based on Theorem 7.3 of \cite{GY} and Theorem 5.2 of this paper, we have the following formula for the local density:
\[
 \left\{
  \begin{array}{l l}
  \beta_{L_v}=1/2 q_v^{-n(n-1)/2}\cdot\sharp\mathrm{O}(n, Q_n) & \quad \text{if $v \nmid (2)$};\\
 \beta_{L_v}=1/2q_v^{1-n(n-1)/2}\cdot\sharp\tilde{G}(\kappa_v) & \quad \text{If $v \mid (2)$},
     \end{array} \right.
\]
 where
\[
  \sharp\tilde{G}(\kappa_v) = \left\{
  \begin{array}{l l}
    4q_v^{n-1}\cdot\sharp\mathrm{SO}(n-1) & \quad \text{if $n\equiv \pm 1$ mod 8, or $n\equiv \pm 3$ mod 8 and $v\in \mathcal{I}_2$};\\
    4q_v^{n-1}\cdot\sharp{}^2\mathrm{SO}(n-1) & \quad \text{if $n\equiv \pm 3$ mod 8 and $v\in \mathcal{I}_1$};\\
    4q_v^{n-1}\cdot\sharp\mathrm{SO}(n-1) & \quad \text{if $n\equiv \pm 2$ mod 8};\\
    4q_v^{2n-3}\cdot\sharp\mathrm{SO}(n-2) & \quad \text{if $n\equiv \pm 0$  mod 8, or $n\equiv  4$ mod 8 and $v\in \mathcal{I}_2$};\\
    4q_v^{2n-3}\cdot\sharp{}^2\mathrm{SO}(n-2) & \quad \text{if $n\equiv  4$ mod 8 and $v\in \mathcal{I}_1$}.\\
    \end{array} \right.
\]
Here, $\mathrm{SO}(n)$ (resp.,${}^2\mathrm{SO}(n)$) denotes the split (resp., nonsplit) connected orthogonal group over $\kappa_v$.

The order of an orthogonal group defined over a finite field is well known. We state it below according to the characteristic of a finite field and the dimension $n$.

If the characteristic of the finite field $\kappa_v$ is greater than 2, then the order of an orthogonal group is as follows:\\
$
   \left\{
  \begin{array}{l l}
   \sharp\mathrm{O}(2m+1, Q_{2m+1})=2q_v^{m^2}\prod_{i=1}^{m}(q_v^{2i}-1)\\
   \sharp\mathrm{O}(2m, Q_{2m})=2q_v^{m(m-1)}(q_v^m-1)\prod_{i=1}^{m-1}(q_v^{2i}-1) &  \text{if $-1$ is a square in $F_q$}\\
   \sharp\mathrm{O}(2m, Q_{2m})=2q_v^{m(m-1)}(q_v^m-(-1)^m)\prod_{i=1}^{m-1}(q_v^{2i}-1) &  \text{if $-1$ is not a square in $F_q$}.
    \end{array} \right.
$ \\

If the characteristic of the finite field $\kappa_v$ is 2, then the order of an orthogonal group is as follows:\\
$
   \left\{
  \begin{array}{l l}
\sharp\mathrm{SO}(2m+1)=q_v^{m^2}\prod_{i=1}^{m}(q_v^{2i}-1)\\
\sharp\mathrm{SO}(2m)=q_v^{m(m-1)}(q_v^m-1)\prod_{i=1}^{m-1}(q_v^{2i}-1)\\
\sharp{}^2\mathrm{SO}(2m)=q_v^{m(m-1)}(q_v^m+1)\prod_{i=1}^{m-1}(q_v^{2i}-1).
    \end{array} \right.
$\\

By combining these with Proposition 6.1, we have the following theorem:
\begin{Thm}
\begin{itemize}
\item[(1)]
 When $n=2m+1$,
\begin{equation}
\mathrm{Mass}(L, Q_n)= ((\prod^{m}_{i=1}\frac{(2i-1)!}{(2\pi)^{2i}})\cdot 2^{m+1})^d   \cdot d_k^{n(n-1)/4} \cdot D(L) \cdot \prod^{m}_{i=1}\zeta_k(2i),
\end{equation}
\[
D(L) = \left\{
  \begin{array}{l l}
  \prod^{~}_{v\in\mathcal{I}}\frac{q_v^m+1}{2q_v^{m+1}}    & \quad  \text{if $n\equiv \pm 1$ mod 8};\\
  \prod_{v\in\mathcal{I}_1}^{ }\frac{q_v^m-1}{2q_v^{m+1}}\cdot \prod_{v\in\mathcal{I}_2}^{ }\frac{q_v^m+1}{2q_v^{m+1}}   &   \quad  \text{if $n\equiv \pm 3$ mod 8}.\\
    \end{array} \right.
\]
\item[(2)]
 When $n=2m$,
\begin{equation}
\mathrm{Mass}(L, Q_n)=
((\prod^{m-1}_{i=1}\frac{(2i-1)!}{(2\pi)^{2i}})\cdot \frac{(m-1)!}{(2\pi)^m} \cdot 2^{2m})^d   \cdot d_k^{n(n-1)/4} \cdot D(L) \cdot L_k(m, \chi)  \cdot \prod^{m-1}_{i=1}\zeta_k(2i),
\end{equation}
\[
D(L)= \left\{
  \begin{array}{l l}
  \prod^{~}_{v\in\mathcal{I}}\frac{(q_v^{m-1}+1)(q_v^m-1)}{2q_v^{2m}}    & \quad  \text{if $n\equiv 0$ mod 8};\\
  \prod_{v\in\mathcal{I}_1}^{ }\frac{(q_v^{m-1}-1)(q_v^m-1)}{2q_v^{2m}}\cdot \prod_{v\in\mathcal{I}_2}^{ }\frac{(q_v^{m-1}+1)(q_v^m-1)}{2q_v^{2m}}  &  \quad
  \text{if $n\equiv 4$ mod 8};\\
  \prod_{v\in\mathcal{I} }^{ }                            
  \frac{1}{2q_v}
    & \quad  \text{if $n\equiv \pm 2 $ mod 8}.\\
    \end{array} \right.
\]
Here $\chi$ is the Galois character of $k(\sqrt{(-1)^m})$ over $k$.
\end{itemize}
\end{Thm}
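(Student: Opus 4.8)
The plan is to obtain the two displayed formulas by substituting the Gan--Yu mass formula recalled above together with the explicit local densities $\beta_{L_v}$ listed at the start of this section, and then simplifying the resulting Euler product. First I would check that the archimedean constant $c(L)=(\lambda^{-1}\mu)^d$ already equals the bracketed leading constant in each displayed formula. Since $G=\mathrm{O}(n)$ has fundamental degrees $2,4,\dots,2m$ when $n=2m+1$ and $2,4,\dots,2m-2,m$ when $n=2m$, the factor $\lambda^{-1}=\prod_i (d_i-1)!/(2\pi)^{d_i}$ becomes exactly $\prod_{i=1}^m (2i-1)!/(2\pi)^{2i}$ in the odd case and $\big(\prod_{i=1}^{m-1}(2i-1)!/(2\pi)^{2i}\big)\cdot (m-1)!/(2\pi)^m$ in the even case; multiplying by $\mu=2^{m+1}$ (resp. $2^{2m}$) reproduces the stated constants raised to the power $d$. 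This disposes of the constant, so the real content is the evaluation of $1/\prod_{v\ \mathrm{finite}}\beta_{L_v}$.

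Next I would split this product as $\prod_{v\nmid(2)}\cdot\prod_{v\mid(2)}$, i.e. over $v\notin\mathcal I$ and $v\in\mathcal I$. At a place $v\nmid(2)$ the residue characteristic is odd, so I substitute the listed order of $\mathrm{O}(n,Q_n)$ into $\beta_{L_v}=\tfrac12 q_v^{-n(n-1)/2}\sharp\mathrm{O}(n,Q_n)$. A short simplification gives $\beta_{L_v}=\prod_{i=1}^m(1-q_v^{-2i})$ when $n=2m+1$, and $\beta_{L_v}=(1-\chi(v)q_v^{-m})\prod_{i=1}^{m-1}(1-q_v^{-2i})$ when $n=2m$, where $\chi(v)=+1$ or $-1$ according as $\sum x_i^2$ is split or nonsplit over $\kappa_v$, equivalently according as $(-1)^m$ is a square in $\kappa_v$. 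This last condition is exactly the splitting condition for $v$ in $k(\sqrt{(-1)^m})$, so $\chi$ is the quadratic character of that extension. Taking the product over all $v\nmid(2)$ and recognizing the Euler factors yields $\prod_{i=1}^m\zeta_k(2i)$ in the odd case and $L_k(m,\chi)\prod_{i=1}^{m-1}\zeta_k(2i)$ in the even case, each multiplied by the finite product over $v\in\mathcal I$ of the dyadic Euler factors I must reinsert in order to complete the global zeta and $L$-functions.

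For $v\in\mathcal I$ I would substitute the table for $\sharp\tilde G(\kappa_v)$ (which itself rests on Theorem 5.2) together with the listed orders of $\mathrm{SO}$ and ${}^2\mathrm{SO}$ into $\beta_{L_v}=\tfrac12 q_v^{1-n(n-1)/2}\sharp\tilde G(\kappa_v)$, and then combine $1/\beta_{L_v}$ with the reinserted dyadic factors $(1-\chi(v)q_v^{-m})\prod(1-q_v^{-2i})$ (resp. just $\prod(1-q_v^{-2i})$ in the odd case). Each case collapses after cancellation to the claimed local factor of $D(L)$: for example $n\equiv\pm1\pmod 8$ gives $(q_v^m+1)/2q_v^{m+1}$, while $n\equiv0\pmod 8$ gives $(q_v^{m-1}+1)(q_v^m-1)/2q_v^{2m}$. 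The relevant row of the table is selected by $n\bmod 8$ and, when it matters, by whether $[\kappa_v:F_2]$ is odd or even (i.e. $v\in\mathcal I_1$ or $\mathcal I_2$), which is precisely the $\mathrm{SO}$ versus ${}^2\mathrm{SO}$ dichotomy in $\sharp\tilde G(\kappa_v)$. I would also record that $k(\sqrt{-1})/k$ is ramified at every dyadic place, since $(2)$ is unramified over $k$ while $\mathbb{Q}_2(\sqrt{-1})/\mathbb{Q}_2$ is ramified; hence $\chi(v)=0$ for $v\in\mathcal I$ whenever $m$ is odd, which is exactly what removes the character from $D(L)$ in the cases $n\equiv\pm2\pmod 8$ and leaves $\prod_{v\in\mathcal I}1/2q_v$.

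The main obstacle I anticipate is the bookkeeping at the dyadic places: one must scrupulously match the dyadic Euler factors reinserted to complete $\zeta_k$ and $L_k(\cdot,\chi)$ against the local $1/\beta_{L_v}$, and identify case by case in $n\bmod 8$ which split or nonsplit orthogonal order governs $\sharp\tilde G(\kappa_v)$ and how this interacts with the value $\chi(v)\in\{0,\pm1\}$. Once the correspondence between the table rows, the congruence classes modulo $8$, and the ramification of $\chi$ at $2$ is pinned down, the remaining manipulations are purely algebraic and, as the sample cases above indicate, collapse cleanly to the stated $D(L)$.
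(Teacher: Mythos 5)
Your proposal is correct and follows essentially the same route as the paper, which simply combines Proposition 6.1 with the displayed local densities and the orders of the finite orthogonal groups; your substitutions (the fundamental degrees giving $c(L)$, the non-dyadic Euler factors assembling into $\prod\zeta_k(2i)$ and $L_k(m,\chi)$, and the dyadic cancellations producing $D(L)$, including the vanishing of $\chi$ at dyadic places when $m$ is odd) all check out. The paper leaves these manipulations implicit, so your write-up is a faithful expansion of its one-line argument.
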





Since the mass formula represents a rational number,
we can rewrite the above formulas using the functional equations of the Dedekind zeta function and the Hecke $L$-series.
For the functional equation of the Dedekind zeta function, we refer to \cite{N}.

\begin{Prop}[(\cite{N}, Corollary VII.5.10)]
Let $\zeta_k(s)$ be the Dedekind zeta function of the totally real number field $k$.
The completed zeta function
$$Z_k(s)=d_k^{s/2}\cdot(\pi^{-s/2}\Gamma(s/2))^d\cdot\zeta_k(s)$$
satisfies the functional equation
$$Z_k(s)=Z_k(1-s).$$
\end{Prop}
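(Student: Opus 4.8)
The plan is to follow Hecke's method of analytic continuation through theta series, which is the route taken in \cite{N}. Because $k$ is totally real, the Minkowski space $k\otimes_{\mathbb{Q}}\mathbb{R}$ is canonically $\mathbb{R}^d$ via the $d$ real embeddings, the norm becomes the product of coordinates $N(x)=\prod_{i=1}^d x_i$, and each archimedean place contributes the same local factor $\pi^{-s/2}\Gamma(s/2)$; this is precisely why the $d$-th power appears in $Z_k(s)$. First I would decompose the Dedekind zeta function over the ideal class group, $\zeta_k(s)=\sum_{[\mathfrak{c}]}\zeta(s,\mathfrak{c})$, where each partial zeta function ranges over the integral ideals in a fixed class. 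Fixing an integral representative and passing to principal ideals, $\zeta(s,\mathfrak{c})$ becomes a sum over the nonzero elements of a fractional ideal $\mathfrak{a}$, taken modulo the action of the unit group $R^{\times}$ by multiplication.

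The second step is to realize each completed partial zeta function as a Mellin transform of a Gaussian theta series attached to the lattice $\mathfrak{a}\subset\mathbb{R}^d$. Writing $\theta_{\mathfrak{a}}(y)=\sum_{x\in\mathfrak{a}}\exp(-\pi\sum_i y_i x_i^2)$ for $y$ in the positive orthant, I would integrate $\theta_{\mathfrak{a}}$ against $\prod_i y_i^{s/2}$ times an invariant measure over a fundamental domain for the action of the totally positive units on the norm-one hypersurface, together with the radial (norm) variable. The radial integration produces the Gamma factors $(\pi^{-s/2}\Gamma(s/2))^d$, and unfolding the fundamental-domain integral reconstitutes the sum over $\mathfrak{a}\setminus\{0\}$ modulo units, hence the partial zeta function. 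The regulator and the torsion of $R^{\times}$ enter through the covolume of the unit lattice, but they are inert under $s\mapsto 1-s$ and do not disturb the symmetry.

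The analytic heart is the theta transformation formula, obtained by Poisson summation on $\mathbb{R}^d$: one has
$$\theta_{\mathfrak{a}}(y)=\frac{1}{\mathrm{covol}(\mathfrak{a})\prod_i\sqrt{y_i}}\,\theta_{\mathfrak{a}^{\ast}}(1/y),$$
where $\mathfrak{a}^{\ast}$ is the dual lattice under the trace pairing and $\mathrm{covol}(\mathfrak{a})=\sqrt{|d_k|}\,N(\mathfrak{a})$. Splitting the Mellin integral at the norm-one surface and applying this identity to the small-$y$ part yields the meromorphic continuation and exhibits the symmetry: the factor $\prod_i\sqrt{y_i}$ carries $s$ into $1-s$, and $\mathrm{covol}(\mathfrak{a})$ supplies exactly the discriminant factor $|d_k|^{s/2}$ built into $Z_k$. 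Since the trace dual of $\mathfrak{a}$ is $\mathfrak{d}^{-1}\mathfrak{a}^{-1}$ with $\mathfrak{d}$ the different, summing over all ideal classes permutes the partial zeta functions (the inverse class $[\mathfrak{a}]^{-1}$ appears), so the assembled completed function maps to itself and we obtain $Z_k(s)=Z_k(1-s)$.

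The step I expect to be the main obstacle is the second one: correctly unfolding the quotient by the unit group. The partial zeta function is a sum over $\mathfrak{a}\setminus\{0\}$ modulo $R^{\times}$, whereas the theta series sums over the full lattice, so one must introduce the integral over a fundamental domain for the totally positive units acting on the norm-one hypersurface and verify, via Dirichlet's unit theorem and a Jacobian computation producing the regulator, that this integral converts the quotient sum into the full lattice sum. Managing convergence near the boundary of the positive orthant and isolating the polar contribution of the term $x=0$ (responsible for the pole of $\zeta_k$ at $s=1$) is the delicate bookkeeping; once that is in place, the functional equation follows formally from the theta transformation.
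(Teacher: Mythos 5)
Your outline is correct and is essentially the same Hecke theta-series argument (ideal-class decomposition, Mellin transform of Gaussian theta functions, Poisson summation, unfolding the unit action via the regulator) that underlies Corollary VII.5.10 of \cite{N}, which the paper simply cites without reproving. The one point to watch is the unit-group unfolding you yourself flag, but your identification of the trace-dual lattice as $(\mathfrak{a}\mathfrak{d})^{-1}$ and of the covolume as $\sqrt{|d_k|}\,N(\mathfrak{a})$ is exactly what makes the discriminant factor and the class permutation work out.
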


If $s=2i$ for an integer $i$, the above proposition gives the following equation:
\begin{equation}
\zeta_k(1-2i)=d_k^{2i-1/2}\cdot(\pi^{-2i+1/2}\frac{\Gamma(i)}{\Gamma(\frac{1}{2}-i)})^d\cdot\zeta_k(2i),
\end{equation}
where $\Gamma(i)=(i-1)!$ and $\Gamma(\frac{1}{2}-i)=\frac{(-4)^i\cdot i!}{(2i)!}\sqrt{\pi}$.
Therefore, $$\frac{\Gamma(i)}{\Gamma(\frac{1}{2}-i)}=2\cdot(2i-1)!\cdot(-4)^{-i}\cdot\pi^{-1/2}.$$\\

We first assume that $n=2m+1$. Then  Equation  (6.6) induces the following equation:
$$\prod_{i=1}^{m}\zeta_k(1-2i)=(d_k)^{m^2+m/2}\cdot2^{md}\cdot(-1)^{m(m+1)d/2}\cdot\prod_{i=1}^{m}((\frac{(2i-1)!}{(2\pi)^{2i}})^d\cdot\zeta_k(2i)).$$
If we apply the above to  Equation (6.3), then we have the following Mass formula:
\begin{equation}
\mathrm{Mass}(L, Q_n)=(-1)^{m(m+1)d/2}\cdot 2^{d}  \cdot D(L) \cdot \prod^{m}_{i=1}\zeta_k(1-2i).\\
\end{equation}\\

We next assume that $n=2m$. Then  Equation  (6.6) induces the following equation:
$$\prod_{i=1}^{m-1}\zeta_k(1-2i)=(d_k)^{m^2-m/2}\cdot2^{md-d}\cdot(-1)^{m(m-1)d/2}\cdot\prod_{i=1}^{m-1}((\frac{(2i-1)!}{(2\pi)^{2i}})^d\cdot\zeta_k(2i)).$$
If we apply the above to  Equation (6.4), then we have the following Mass formula:
\begin{equation}
\mathrm{Mass}(L, Q_n)=
 d_k^{m-1/2}\cdot \pi^{-md}\cdot (-1)^{m(m-1)d/2}\cdot ((m-1)!)^d  \cdot  2^{d}\cdot D(L)\cdot  L_k(m,\chi)\cdot \prod^{m-1}_{i=1}\zeta_k(1-2i).
\end{equation}\\

Let us apply the functional equation of the Hecke $L$-series to  Equation (6.8).
If $m$ is even, equivalently $n\equiv 0$ or $4$ mod 8, then the character $\chi$ is trivial.
Let $m=2m^{\prime}$. If we put $i=m^{\prime}$ in Equation (6.6), we have the following:
$$\zeta_k(1-m)=d_k^{m-1/2}\cdot(\pi^{-m+1/2}\frac{\Gamma(m^{\prime})}{\Gamma(1/2-m^{\prime})})^d\cdot\zeta_k(m).$$
Equivalently,
\begin{equation}
\zeta_k(1-m)=d_k^{m-1/2}\cdot 2^d\cdot (\frac{(m-1)!}{(-1)^{m^{\prime}}\cdot(2\pi)^m}   )^d\cdot\zeta_k(m).
\end{equation}
We apply Equation (6.9) to Equation (6.8). Then we have the following Mass formula:
\begin{equation}
\mathrm{Mass}(L, Q_n)= 2^{md}\cdot D(L)\cdot \zeta_k(1-m) \cdot \prod^{m-1}_{i=1}\zeta_k(1-2i).
\end{equation}\\

Assume that $m$ $(=2m^{\prime}+1)$ is odd, equivalently $n\equiv \pm 2 $ mod 8.
In this case, the Galois character $\chi$ is non-trivial.
We state the   functional equation for the Hecke $L$-series.

\begin{Prop}[(\cite{N}, Corollary VII.8.6)]
The completed Hecke $L$-series
$$\Lambda_k(s, \chi)=(d_k\cdot N_{k/\mathbb{Q}}\mathfrak{f}(\chi))^{s/2}\cdot (\pi^{\frac{-(s+1)}{2}}\cdot\Gamma(\frac{s+1}{2}))^d\cdot L_k(s, \chi)$$
satisfies the functional equation
$$\Lambda_k(s, \chi)=\epsilon(\chi)\cdot\Lambda_k(1-s, \chi).$$
Here, $\mathfrak{f}(\chi)$ is the conductor of the Hecke character $\chi$
and $|\epsilon(\chi)|=1$.
\end{Prop}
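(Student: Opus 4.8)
The plan is to prove this functional equation by Tate's adelic method, which underlies Neukirch's treatment. First I would attach to $\chi$ a Schwartz--Bruhat function $\Phi = \prod_v \Phi_v$ on the adele ring $\mathbb{A}_k$, taking at each archimedean place the standard Gaussian and at each finite place the characteristic function of a suitable fractional ideal, twisted so as to be adapted to the local component $\chi_v$; the conductor $\mathfrak{f}(\chi)$ dictates the choice at the ramified places. With this $\Phi$ I would form the global zeta integral
$$Z(\Phi, \chi, s) = \int_{\mathbb{A}_k^{\times}} \Phi(x)\,\chi(x)\,|x|^s\, d^{\times}x,$$
which converges for $\mathrm{Re}(s) > 1$ and, because both $\Phi$ and $\chi$ factor over the places, splits as an Euler product $\prod_v Z_v(\Phi_v, \chi_v, s)$ of local zeta integrals. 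The point of the chosen $\Phi_v$ is that this product reproduces exactly the completed series $\Lambda_k(s,\chi)$: the archimedean Gaussians contribute the Gamma factor $(\pi^{-(s+1)/2}\Gamma(\tfrac{s+1}{2}))^d$, and the volumes of the local integration domains contribute the factor $(d_k \cdot N_{k/\mathbb{Q}}\mathfrak{f}(\chi))^{s/2}$.

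The heart of the argument is the adelic Poisson summation formula (Riemann--Roch for $\mathbb{A}_k$): for any Schwartz--Bruhat function one has $\sum_{\gamma \in k} \Phi(\gamma) = \sum_{\gamma \in k} \widehat{\Phi}(\gamma)$, where $\widehat{\Phi}$ is the global Fourier transform relative to a fixed additive character of $\mathbb{A}_k/k$. Splitting the idele class group integral along $|x| \geq 1$ and $|x| < 1$ and applying this identity to the tail yields both the meromorphic continuation and the symmetry
$$Z(\Phi, \chi, s) = Z(\widehat{\Phi}, \chi^{-1}, 1-s),$$
up to polar terms which vanish here because $\chi$ is nontrivial (in the paper's case, quadratic), so $L_k(s,\chi)$ is entire. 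Since $\widehat{\Phi} = \prod_v \widehat{\Phi_v}$, the computation of $\widehat{\Phi}$ is purely local; each place contributes a local root number $\epsilon_v(\chi_v)$ of absolute value $1$, and their product is the global constant $\epsilon(\chi)$, whence $|\epsilon(\chi)| = 1$.

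The main obstacle is the bookkeeping at the ramified finite places: there $\widehat{\Phi_v}$ is not merely a rescaled characteristic function, and its evaluation produces a local Gauss sum whose normalization must be matched against the power of $N_{k/\mathbb{Q}}\mathfrak{f}(\chi)$ sitting inside $\Lambda_k$. Assembling these Gauss sums into the single constant $\epsilon(\chi)$ and checking that its modulus is $1$ is the delicate point, and is precisely the content of the local functional equations in Tate's thesis. (Alternatively, one can follow Neukirch's own route, constructing a Hecke theta series attached to $\chi$, using its modular transformation under $z \mapsto -1/z$ coming from classical Poisson summation on $R \subset k\otimes\mathbb{R}$, and taking a Mellin transform; the theta transformation law then translates directly into the functional equation.)

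Finally, for the specific character needed in this paper -- the quadratic character of $k(\sqrt{-1})/k$ -- there is a shortcut that bypasses the Gauss sum analysis entirely. By class field theory the Dedekind zeta function of the quadratic extension factors as
$$\zeta_{k(\sqrt{-1})}(s) = \zeta_k(s)\cdot L_k(s,\chi),$$
so dividing the completed functional equation of the Dedekind zeta function (the Proposition above, Corollary VII.5.10 of \cite{N}) for $k(\sqrt{-1})$ by the one for $k$ produces the functional equation for the completed $L_k(s,\chi)$ directly, with $\epsilon(\chi)=1$. The constant in front matches because the conductor--discriminant formula gives $d_{k(\sqrt{-1})} = d_k^2 \cdot N_{k/\mathbb{Q}}\mathfrak{f}(\chi)$, while the archimedean factor follows from $\Gamma_{\mathbb{C}}(s) = \Gamma_{\mathbb{R}}(s)\,\Gamma_{\mathbb{R}}(s+1)$ applied at the $d$ complex places of the totally imaginary field $k(\sqrt{-1})$.
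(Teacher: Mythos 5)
The paper offers no proof of this Proposition: it is quoted verbatim as Corollary VII.8.6 of \cite{N} and used as a black box, so there is no internal argument to measure yours against. Your outline is essentially the standard proof of the quoted result. The adelic route is sound in structure (global zeta integral, Euler factorization, adelic Poisson summation, local root numbers of modulus one), and your parenthetical theta-series alternative is in fact the route Neukirch himself takes in Chapter VII. One technical slip in the adelic sketch: since $k$ is totally real and $\chi$ is the character of $k(\sqrt{-1})/k$, its archimedean components are the sign characters, so the plain Gaussian $e^{-\pi x^2}$ makes each real local zeta integral vanish by symmetry; you must take $x\,e^{-\pi x^2}$ at each real place, and it is exactly this choice that produces the factor $\pi^{-(s+1)/2}\Gamma(\tfrac{s+1}{2})=\Gamma_{\mathbb{R}}(s+1)$ appearing in $\Lambda_k(s,\chi)$ rather than $\Gamma_{\mathbb{R}}(s)$. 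Your closing shortcut --- dividing the completed functional equation of $\zeta_{k(\sqrt{-1})}$ by that of $\zeta_k$, using $\zeta_{k(\sqrt{-1})}(s)=\zeta_k(s)L_k(s,\chi)$, the conductor--discriminant formula $|d_{k(\sqrt{-1})}|=d_k^2\cdot N_{k/\mathbb{Q}}\mathfrak{f}(\chi)$, and $\Gamma_{\mathbb{C}}(s)=\Gamma_{\mathbb{R}}(s)\Gamma_{\mathbb{R}}(s+1)$ --- is correct and arguably better suited to the paper's needs: it relies only on the Dedekind zeta functional equation already quoted (Corollary VII.5.10 of \cite{N}), reproduces the completed $L$-function exactly in the normalization stated, and additionally shows $\epsilon(\chi)=1$ for the specific quadratic character used here, a piece of information the paper leaves undetermined (recording only $|\epsilon(\chi)|=1$) and which would slightly sharpen the final mass formula in the even-rank case.
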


If $s=m$ in the above proposition, we have the following equation:
\begin{equation}
L_k(m, \chi)=\epsilon(\chi)\cdot L_k(1-m, \chi)\cdot (d_k\cdot N_{k/\mathbb{Q}}\mathfrak{f}(\chi))^{\frac{1-n}{2}}\cdot \pi^{md}\cdot\frac{(-4)^{m^{\prime}d}}{((m-1)!)^d}.
\end{equation}
Let us apply Equation (6.12) to Equation (6.8). Then we have the following Mass formula:
\begin{equation}
\mathrm{Mass}(L, Q_n)=2^{md}\cdot D(L)\cdot \epsilon(\chi)\cdot (N_{k/\mathbb{Q}}\mathfrak{f}(\chi))^{(1-n)/2}\cdot L_k(1-m,\chi)\cdot \prod^{m-1}_{i=1}\zeta_k(1-2i).
\end{equation}\\

By combining Equation (6.7), (6.10) and (6.13), we finally have the following theorem:

\begin{Thm}
\begin{itemize}
\item[(1)] When $n=2m+1$,
$$\mathrm{Mass}(L, Q_n)=(-1)^{m(m+1)d/2}\cdot  d_k^m \cdot 2^{d}  \cdot D(L) \cdot \prod^{m}_{i=1}\zeta_k(1-2i).$$

\item[(2)] When $n=2m$,
$$\mathrm{Mass}(L, Q_n)=2^{md}\cdot D(L)\cdot \epsilon(\chi)\cdot (N_{k/\mathbb{Q}}\mathfrak{f}(\chi))^{(1-n)/2}\cdot L_k(1-m,\chi)\cdot \prod^{m-1}_{i=1}\zeta_k(1-2i).$$
Here, $|\epsilon(\chi)|=1$ and $D(L)$ is defined in Theorem 6.2.
\end{itemize}
\end{Thm}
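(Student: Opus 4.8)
The plan is to assemble the final statement from the three mass formulas (6.7), (6.10) and (6.13) already established above; the only work beyond quoting them is to record them in the stated two-case form and to merge the two even subcases. First I would recall that Theorem 6.2 expresses $\mathrm{Mass}(L, Q_n)$ in terms of the critical values $\zeta_k(2i)$ --- together with $L_k(m,\chi)$ in the even-rank case --- multiplied by archimedean $\Gamma$- and $\pi$-factors, a power of $2$, and the discriminant power $d_k^{n(n-1)/4}$. Applying the functional equation for the Dedekind zeta function (Proposition 6.5) and for the Hecke $L$-series (Proposition 6.11) rewrites these via the values $\zeta_k(1-2i)$ and $L_k(1-m,\chi)$ at the negative integers, which are rational by Siegel--Klingen; this is exactly the conversion carried out in deriving (6.7), (6.10) and (6.13).

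For $n=2m+1$ there is no $L$-series. Here I would substitute (6.6) into (6.3) and track the cancellations: the half-integral powers of $d_k$ produced by $\prod_i\zeta_k(1-2i)$ cancel against $d_k^{n(n-1)/4}$ since $n(n-1)/4=m^2+m/2=\sum_{i=1}^m(2i-\tfrac12)$, the $\pi$- and factorial-factors cancel against the archimedean constant, and the $2$-powers collapse to $2^d$. The outcome is the formula recorded as Equation (6.7), which is assertion (1).

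For $n=2m$ I would observe that the single formula (6.13) already covers both parities of $m$, so that assertion (2) follows from (6.13) directly. Indeed, when $m$ is even the Galois character $\chi$ of $k(\sqrt{(-1)^m})/k$ is trivial, since $(-1)^m=1$; hence $L_k(1-m,\chi)=\zeta_k(1-m)$, the conductor $\mathfrak{f}(\chi)$ is trivial so that $(N_{k/\mathbb{Q}}\mathfrak{f}(\chi))^{(1-n)/2}=1$, and $\epsilon(\chi)=1$. Thus (6.13) specializes to (6.10), and (6.13) is valid for all even $n$.

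I expect the main obstacle to be purely the bookkeeping of the powers of $d_k$, $\pi$ and $2$ and of the sign $(-1)^{m(m+1)d/2}$ as they pass through the two functional equations, together with the verification that these factors cancel uniformly in both parities of $m$; but since this cancellation has already been effected in passing from Theorem 6.2 to (6.7)--(6.13), the theorem will follow by collecting the three cases.
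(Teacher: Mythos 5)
Your proposal follows essentially the same route as the paper, whose entire proof is the single line ``By combining Equation (6.7), (6.10) and (6.13), we finally have the following theorem''; your additional observation that (6.13) specializes to (6.10) when $m$ is even (trivial $\chi$, trivial conductor, $\epsilon(\chi)=1$) is exactly the merging step the paper leaves implicit, and your bookkeeping of the $d_k$-, $\pi$- and $2$-powers through the functional equations matches the paper's derivation of (6.7)--(6.13). One caveat worth flagging: the cancellation you describe in the odd case (the exponent $m^2+m/2$ of $d_k$ from $\prod_i\zeta_k(1-2i)$ cancelling $d_k^{n(n-1)/4}$ completely) yields Equation (6.7), which has \emph{no} residual power of $d_k$, whereas the theorem as stated carries an extra factor $d_k^m$; this discrepancy is internal to the paper (its Equation (6.7) and its Theorem statement disagree), so your argument actually establishes the $d_k$-free version rather than the displayed statement.
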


\section{Appendix: The proof of  Lemma 4.2}

\begin{proof}
Recall that  $\tilde{M}$ is the special fiber of $\underline{M}^{\ast}$.
As similar to the construction of $\varphi_i$ explained at the beginning of Section 4.1,
the morphism $\varphi_i$ is extended to the morphism
 $$\tilde{\varphi_i} : \tilde{M} \longrightarrow  \mathrm{Aut}_{\kappa}(\bar{V_i}) $$
 such that $\tilde{\varphi_i}|_{\tilde{G}}=\varphi_i$. Here, $\bar{V_i}=B_i/Z_i$.
We define $$\tilde{\varphi}=\prod_i \tilde{\varphi_i} : \tilde{M} \longrightarrow  \prod_i\mathrm{Aut}_{\kappa}(\bar{V_i}).$$
Then $\tilde{\varphi}|_{\tilde{G}}=\varphi$.


Before describing the equations defining $\mathrm{Ker~}\tilde{\varphi} $, we state notations here.
We use $a_{i-1}, b_{i-1}, c_{i-1},$ $d_{i-1}, e_{i-1}, f_{i-1}, g_{i-1}, h_{i-1}, i_{i-1}$
(resp. $a_{i-1}^{\prime}, b_{i-1}^{\prime}, c_{i-1}^{\prime},
d_{i-1}^{\prime}, e_{i-1}^{\prime},
 f_{i-1}^{\prime}, g_{i-1}^{\prime}, h_{i-1}^{\prime}, i_{i-1}^{\prime}$)
to denote a block in $m_{i-1, i}$  (resp. $m_{i, i-1}$).\\
Recall that we have represented the given quadratic form $q$ by a symmetric matrix $\begin{pmatrix} 2^{i}\cdot \delta_i\end{pmatrix}$ with $2^{i}\cdot \delta_i$
     for the $(i,i)$-block and $0$ for remaining blocks.
 Assume that $L_i$ is \textit{of type I}. Let $\delta_i=\begin{pmatrix}  \delta_i^{\prime}&0\\0&\delta_i^{\prime\prime}  \end{pmatrix}$,
 where $\delta_{i}^{\prime}$ is an $(n_{i}-1) \times (n_i-1)$-matrix (resp. an $(n_{i}-2) \times (n_i-2)$-matrix)
 if $L_i$ is \textit{of type} $\textit{I}^o$ (resp. \textit{of type} $\textit{I}^e$).
In particular, if $L_i$ is \textit{of type} $\textit{I}^e$,
 $\delta_i^{\prime\prime}=\begin{pmatrix} 1&1\\1&2\gamma_i  \end{pmatrix}$ by Theorem 2.4. 
We denote  $\gamma_i$ mod $2$ by  $\widetilde{\gamma_i}(\in \kappa)$.
In addition, we denote  the solution of the equation $x^2-\widetilde{\gamma_i}=0$ by $\sqrt{\widetilde{\gamma_i}} (\in \kappa)$.\\

We now describe the equations defining $\mathrm{Ker~}\tilde{\varphi} $. There are the following 9 cases according to each type of $L_{i-1}, L_i, L_{i+1}$.
\begin{enumerate}
\item Assume that $L_i$ is \textit{free}.
\begin{itemize}
\item[a)] If $L_i$ is  \textit{of type II},  set $m_{i,i}=\mathrm{id}$.
\item[b)] If $L_i$ is  \textit{of type} $\textit{I}^o$, set $s_i=\mathrm{id}$.
\item[c)] If $L_i$ is  \textit{of type} $\textit{I}^e_1$, set $s_i=\mathrm{id}$ and $v_i=0$.
\item[d)] If $L_i$ is  \textit{of type} $\textit{I}^e_2$, set $s_i=\mathrm{id}$.
\end{itemize}

\item Assume that $L_{i-1}$ is \textit{of type} $\textit{I}^o$ and $L_{i+1}$ is \textit{of type II}.
\begin{itemize}
\item[a)] If $L_i$ is  \textit{of type II}, the matrix form of $2m_{i-1, i}$ is $\begin{pmatrix} 2a_{i-1}\\2b_{i-1}  \end{pmatrix}$,
where $a_{i-1}$ is an $(n_{i-1}-1) \times (n_i)$-matrix, etc.
Set $m_{i,i}=\mathrm{id}$ and $b_{i-1}=0$.
\item[b)] If $L_i$ is  \textit{of type} $\textit{I}^o$, the matrix form of $2m_{i-1, i}$ is $\begin{pmatrix} 2a_{i-1}&2c_{i-1}\\ 2b_{i-1}&4f_{i-1}  \end{pmatrix}$,
where $a_{i-1}$ is an $(n_{i-1}-1) \times (n_i-1)$-matrix, etc.
Set $s_i=\mathrm{id}$ and $b_{i-1}=0$.
\item[c)] If $L_i$ is  \textit{of type} $\textit{I}^e$, the matrix form of $2m_{i-1, i}$ is $\begin{pmatrix} 2a_{i-1}&2c_{i-1} & 2e_{i-1}\\ 2b_{i-1}&2d_{i-1}&4f_{i-1}  \end{pmatrix}$,
where $a_{i-1}$ is an $(n_{i-1}-1) \times (n_i-2)$-matrix, etc.
If $L_i$ is  \textit{of type} $\textit{I}^e$, set $s_i=\mathrm{id}$ and $b_{i-1}=\sqrt{\widetilde{\gamma_i}}\cdot v_i$.
\end{itemize}

\item Assume that $L_{i-1}$ is \textit{of type} $\textit{I}^e$ and $L_{i+1}$ is \textit{of type II}.
\begin{itemize}
\item[a)] If $L_i$ is  \textit{of type II}, the matrix form of $2m_{i-1, i}$ is $\begin{pmatrix} 2a_{i-1}\\2b_{i-1} \\2c_{i-1} \end{pmatrix}$,
where $a_{i-1}$ is an $(n_{i-1}-2) \times (n_i)$-matrix, etc.
Set $m_{i,i}=\mathrm{id}$ and $b_{i-1}=0$.
\item[b)] If $L_i$ is  \textit{of type} $\textit{I}^o$, the matrix form of $2m_{i-1 i}$ is $\begin{pmatrix} 2a_{i-1}&2d_{i-1}\\ 2b_{i-1}&4f_{i-1}\\ 2c_{i-1}&2e_{i-1}  \end{pmatrix}$,
where $a_{i-1}$ is an $(n_{i-1}-2) \times (n_i-1)$-matrix, etc.
Set $s_i=\mathrm{id}$ and $b_{i-1}=0$.
\item[c)] If $L_i$ is  \textit{of type} $\textit{I}^e$, the matrix form of $2m_{i-1, i}$ is $\begin{pmatrix} 2a_{i-1}&2i_{i-1}& 2h_{i-1}\\2b_{i-1}&2d_{i-1}&4f_{i-1} \\2c_{i-1}&2g_{i-1}&2e_{i-1} \end{pmatrix}$,
where $a_{i-1}$ is an $(n_{i-1}-2) \times (n_i-2)$-matrix, etc.
If $L_i$ is  \textit{of type} $\textit{I}^e$, set $s_i=\mathrm{id}$ and $b_{i-1}=\sqrt{\widetilde{\gamma_i}}\cdot v_i$.
\end{itemize}

\item Assume that $L_{i-1}$ is \textit{of type II} and $L_{i+1}$ is \textit{of type} $\textit{I}^o$.
\begin{itemize}
\item[a)] If $L_i$ is  \textit{of type II}, the matrix form of $m_{i+1, i}$ is $\begin{pmatrix} a_{i}^{\prime}\\b_{i}^{\prime}  \end{pmatrix}$,
where $a_{i}^{\prime}$ is an $(n_{i+1}-1) \times (n_i)$-matrix, etc.
Set $m_{i,i}=\mathrm{id}$ and $b_{i}^{\prime}=0$.
\item[b)] If $L_i$ is  \textit{of type} $\textit{I}^o$, the matrix form of $m_{i+1, i}$ is $\begin{pmatrix} a_{i}^{\prime}&c_{i}^{\prime}\\ b_{i}^{\prime}&2f_{i}^{\prime}  \end{pmatrix}$,
where $a_{i}^{\prime}$ is an $(n_{i+1}-1) \times (n_i-1)$-matrix, etc.
Set $s_i=\mathrm{id}$ and $b_{i}^{\prime}=0$.
\item[c)] If $L_i$ is  \textit{of type} $\textit{I}^e$, the matrix form of $m_{i+1, i}$ is $\begin{pmatrix} a_{i}^{\prime}&c_{i}^{\prime} & e_{i}^{\prime}\\ b_{i}^{\prime}&d_{i}^{\prime}&2f_{i}^{\prime}  \end{pmatrix}$,
where $a_{i}^{\prime}$ is an $(n_{i+1}-1) \times (n_i-2)$-matrix, etc.
If $L_i$ is  \textit{of type} $\textit{I}^e$, set $s_i=\mathrm{id}$ and $b_{i}^{\prime}=\sqrt{\widetilde{\gamma_i}}\cdot v_i$.
\end{itemize}

\item Assume that $L_{i-1}$ is \textit{of type II} and $L_{i+1}$ is \textit{of type} $\textit{I}^e$.
\begin{itemize}
\item[a)] If $L_i$ is  \textit{of type II}, the matrix form of $m_{i+1, i}$ is $\begin{pmatrix} a_{i}^{\prime}\\b_{i}^{\prime} \\c_{i}^{\prime} \end{pmatrix}$,
where $a_{i}^{\prime}$ is an $(n_{i+1}-2) \times (n_i)$-matrix, etc.
Set $m_{ii}=\mathrm{id}$ and $b_{i}^{\prime}=0$.
\item[b)] If $L_i$ is  \textit{of type} $\textit{I}^o$, the matrix form of $m_{i+1, i}$ is $\begin{pmatrix} a_{i}^{\prime}&d_{i}^{\prime}\\ b_{i}^{\prime}&2f_{i}^{\prime}\\ c_{i}^{\prime}&e_{i}^{\prime}  \end{pmatrix}$,
where $a_{i}^{\prime}$ is an $(n_{i+1}-2) \times (n_i-1)$-matrix, etc.
Set $s_i=\mathrm{id}$ and $b_{i}^{\prime}=0$.
\item[c)] If $L_i$ is  \textit{of type} $\textit{I}^e$, the matrix form of $m_{i+1, i}$ is $\begin{pmatrix} a_{i}^{\prime}&i_{i}^{\prime}& h_{i}^{\prime}\\b_{i}^{\prime}&d_{i}^{\prime}&2f_{i}^{\prime} \\c_{i}^{\prime}&g_{i}^{\prime}&e_{i}^{\prime} \end{pmatrix}$,
where $a_{i}^{\prime}$ is an $(n_{i+1}-2) \times (n_i-2)$-matrix, etc.
If $L_i$ is  \textit{of type} $\textit{I}^e$, set $s_i=\mathrm{id}$ and $b_{i}^{\prime}=\sqrt{\widetilde{\gamma_i}}\cdot v_i$.
\end{itemize}

\item Assume that $L_{i-1}$ is \textit{of type} $\textit{I}^o$ and $L_{i+1}$ is \textit{of type} $\textit{I}^o$.
\begin{itemize}
\item[a)] If $L_i$ is  \textit{of type II}, the matrix form of $2m_{i-1, i}$ is $\begin{pmatrix} 2a_{i-1}\\2b_{i-1}  \end{pmatrix}$,
where $a_{i-1}$ is an $(n_{i-1}-1) \times (n_i)$-matrix, and
the matrix form of $m_{i+1, i}$ is $\begin{pmatrix} a_{i}^{\prime}\\b_{i}^{\prime}  \end{pmatrix}$,
where $a_{i}^{\prime}$ is an $(n_{i+1}-1) \times (n_i)$-matrix, etc.
Set $m_{i,i}=\mathrm{id}$ and $b_{i-1}=b_{i}^{\prime}$.
\item[b)] If $L_i$ is  \textit{of type} $\textit{I}^o$, the matrix form of $2m_{i-1, i}$ is $\begin{pmatrix} 2a_{i-1}&2c_{i-1}\\ 2b_{i-1}&4f_{i-1}  \end{pmatrix}$,
where $a_{i-1}$ is an $(n_{i-1}-1) \times (n_i-1)$-matrix, and
the matrix form of $m_{i+1, i}$ is $\begin{pmatrix} a_{i}^{\prime}&c_{i}^{\prime}\\ b_{i}^{\prime}&2f_{i}^{\prime}  \end{pmatrix}$,
where $a_{i}^{\prime}$ is an $(n_{i+1}-1) \times (n_i-1)$-matrix, etc.
Set $s_i=\mathrm{id}$ and $b_{i-1}=b_{i}^{\prime}$.
\item[c)] If $L_i$ is  \textit{of type} $\textit{I}^e$, the matrix form of $2m_{i-1, i}$ is $\begin{pmatrix} 2a_{i-1}&2c_{i-1} & 2e_{i-1}\\ 2b_{i-1}&2d_{i-1}&4f_{i-1}  \end{pmatrix}$,
where $a_{i-1}$ is an $(n_{i-1}-1) \times (n_i-2)$-matrix, and
the matrix form of $m_{i+1, i}$ is $\begin{pmatrix} a_{i}^{\prime}&c_{i}^{\prime} & e_{i}^{\prime}\\ b_{i}^{\prime}&d_{i}^{\prime}&2f_{i}^{\prime}  \end{pmatrix}$,
where $a_{i}^{\prime}$ is an $(n_{i+1}-1) \times (n_i-2)$-matrix, etc.
If $L_i$ is  \textit{of type} $\textit{I}^e$, set $s_i=\mathrm{id}$ and $b_{i-1}+\sqrt{\widetilde{\gamma_i}}\cdot v_i+b_{i}^{\prime}=0$.
\end{itemize}

\item Assume that $L_{i-1}$ is \textit{of type} $\textit{I}^e$ and $L_{i+1}$ is \textit{of type} $\textit{I}^o$.
\begin{itemize}
\item[a)] If $L_i$ is  \textit{of type II}, the matrix form of $2m_{i-1, i}$ is $\begin{pmatrix} 2a_{i-1}\\2b_{i-1} \\2c_{i-1} \end{pmatrix}$,
where $a_{i-1}$ is an $(n_{i-1}-2) \times (n_i)$-matrix,  and
the matrix form of $m_{i+1, i}$ is $\begin{pmatrix} a_{i}^{\prime}\\b_{i}^{\prime}  \end{pmatrix}$,
where $a_{i}^{\prime}$ is an $(n_{i+1}-1) \times (n_i)$-matrix, etc.
Set $m_{ii}=\mathrm{id}$ and $b_{i-1}=b_{i}^{\prime}$.
\item[b)] If $L_i$ is  \textit{of type} $\textit{I}^o$, the matrix form of $2m_{i-1, i}$ is $\begin{pmatrix} 2a_{i-1}&2d_{i-1}\\ 2b_{i-1}&4f_{i-1}\\ 2c_{i-1}&2e_{i-1}  \end{pmatrix}$,
where $a_{i-1}$ is an $(n_{i-1}-2) \times (n_i-1)$-matrix,  and
the matrix form of $m_{i+1, i}$ is $\begin{pmatrix} a_{i}^{\prime}&c_{i}^{\prime}\\ b_{i}^{\prime}&2f_{i}^{\prime}  \end{pmatrix}$,
where $a_{i}^{\prime}$ is an $(n_{i+1}-1) \times (n_i-1)$-matrix, etc.
Set $s_i=\mathrm{id}$ and $b_{i-1}=b_{i}^{\prime}$.
\item[c)] If $L_i$ is  \textit{of type} $\textit{I}^e$, the matrix form of $2m_{i-1, i}$ is $\begin{pmatrix} 2a_{i-1}&2i_{i-1}& 2h_{i-1}\\2b_{i-1}&2d_{i-1}&4f_{i-1} \\2c_{i-1}&2g_{i-1}&2e_{i-1} \end{pmatrix}$,
where $a_{i-1}$ is an $(n_{i-1}-2) \times (n_i-2)$-matrix,  and
the matrix form of $m_{i+1, i}$ is $\begin{pmatrix} a_{i}^{\prime}&c_{i}^{\prime} & e_{i}^{\prime}\\ b_{i}^{\prime}&d_{i}^{\prime}&2f_{i}^{\prime}  \end{pmatrix}$,
where $a_{i}^{\prime}$ is an $(n_{i+1}-1) \times (n_i-2)$-matrix, etc.
If $L_i$ is  \textit{of type} $\textit{I}^e$, set $s_i=\mathrm{id}$ and $b_{i-1}+\sqrt{\widetilde{\gamma_i}}\cdot v_i+b_{i}^{\prime}=0$.
\end{itemize}

\item Assume that $L_{i-1}$ is \textit{of type} $\textit{I}^o$ and $L_{i+1}$ is \textit{of type} $\textit{I}^e$.
\begin{itemize}
\item[a)] If $L_i$ is  \textit{of type II},
the matrix form of $2m_{i-1, i}$ is $\begin{pmatrix} 2a_{i-1}\\2b_{i-1}  \end{pmatrix}$,
where $a_{i}^{\prime}$ is an $(n_{i-1}-1) \times (n_i)$-matrix, and
the matrix form of $m_{i+1, i}$ is $\begin{pmatrix} a_{i}^{\prime}\\b_{i}^{\prime} \\c_{i}^{\prime} \end{pmatrix}$,
where $a_{i}^{\prime}$ is an $(n_{i+1}-2) \times (n_i)$-matrix,
Set $m_{i,i}=\mathrm{id}$ and $b_{i-1}=b_{i}^{\prime}$.
\item[b)] If $L_i$ is  \textit{of type} $\textit{I}^o$,
the matrix form of $2m_{i-1, i}$ is $\begin{pmatrix} 2a_{i-1}&2c_{i-1}\\ 2b_{i-1}&4f_{i-1}  \end{pmatrix}$,
where $a_{i-1}$ is an $(n_{i-1}-1) \times (n_i-1)$-matrix, and
the matrix form of $m_{i+1, i}$ is $\begin{pmatrix} a_{i}^{\prime}&d_{i}^{\prime}\\ b_{i}^{\prime}&2f_{i}^{\prime}\\ c_{i}^{\prime}&e_{i}^{\prime}  \end{pmatrix}$,
where $a_{i}^{\prime}$ is an $(n_{i+1}-2) \times (n_i-1)$-matrix.
Set $s_i=\mathrm{id}$ and $b_{i-1}=b_{i}^{\prime}$.
\item[c)] If $L_i$ is  \textit{of type} $\textit{I}^e$,
the matrix form of $2m_{i-1, i}$ is $\begin{pmatrix} 2a_{i-1}&2c_{i-1} & 2e_{i-1}\\ 2b_{i-1}&2d_{i-1}&4f_{i-1}  \end{pmatrix}$,
where $a_{i-1}$ is an $(n_{i-1}-1) \times (n_i-2)$-matrix, and
the matrix form of $m_{i+1, i}$ is $\begin{pmatrix} a_{i}^{\prime}&i_{i}^{\prime}& h_{i}^{\prime}\\b_{i}^{\prime}&d_{i}^{\prime}&2f_{i}^{\prime} \\c_{i}^{\prime}&g_{i}^{\prime}&e_{i}^{\prime} \end{pmatrix}$,
where $a_{i}^{\prime}$ is an $(n_{i+1}-2) \times (n_i-2)$-matrix.
If $L_i$ is  \textit{of type} $\textit{I}^e$, set $s_i=\mathrm{id}$ and $b_{i-1}+\sqrt{\widetilde{\gamma_i}}\cdot v_i+b_{i}^{\prime}=0$.
\end{itemize}

\item Assume that $L_{i-1}$ is \textit{of type} $\textit{I}^e$ and $L_{i+1}$ is \textit{of type} $\textit{I}^e$.
\begin{itemize}
\item[a)] If $L_i$ is  \textit{of type II}, the matrix form of $2m_{i-1, i}$ is $\begin{pmatrix} 2a_{i-1}\\2b_{i-1} \\2c_{i-1} \end{pmatrix}$,
where $a_{i-1}$ is an $(n_{i-1}-2) \times (n_i)$-matrix,  and
the matrix form of $m_{i+1, i}$ is $\begin{pmatrix} a_{i}^{\prime}\\b_{i}^{\prime} \\c_{i}^{\prime} \end{pmatrix}$,
where $a_{i}^{\prime}$ is an $(n_{i+1}-2) \times (n_i)$-matrix, etc.
Set $m_{i,i}=\mathrm{id}$ and $b_{i-1}=b_{i}^{\prime}$.
\item[b)] If $L_i$ is  \textit{of type} $\textit{I}^o$, the matrix form of $2m_{i-1, i}$ is $\begin{pmatrix} 2a_{i-1}&2d_{i-1}\\ 2b_{i-1}&4f_{i-1}\\ 2c_{i-1}&2e_{i-1}  \end{pmatrix}$,
where $a_{i-1}$ is an $(n_{i-1}-2) \times (n_i-1)$-matrix,  and
the matrix form of $m_{i+1, i}$ is $\begin{pmatrix} a_{i}^{\prime}&d_{i}^{\prime}\\ b_{i}^{\prime}&2f_{i}^{\prime}\\ c_{i}^{\prime}&e_{i}^{\prime}  \end{pmatrix}$,
where $a_{i}^{\prime}$ is an $(n_{i+1}-2) \times (n_i-1)$-matrix, etc.
Set $s_i=\mathrm{id}$ and $b_{i-1}=b_{i}^{\prime}$.
\item[c)] If $L_i$ is  \textit{of type} $\textit{I}^e$, the matrix form of $2m_{i-1, i}$ is $\begin{pmatrix} 2a_{i-1}&2i_{i-1}& 2h_{i-1}\\2b_{i-1}&2d_{i-1}&4f_{i-1} \\2c_{i-1}&2g_{i-1}&2e_{i-1} \end{pmatrix}$,
where $a_{i-1}$ is an $(n_{i-1}-2) \times (n_i-2)$-matrix,  and
the matrix form of $m_{i+1, i}$ is $\begin{pmatrix} a_{i}^{\prime}&i_{i}^{\prime}& h_{i}^{\prime}\\b_{i}^{\prime}&d_{i}^{\prime}&2f_{i}^{\prime} \\c_{i}^{\prime}&g_{i}^{\prime}&e_{i}^{\prime} \end{pmatrix}$,
where $a_{i}^{\prime}$ is an $(n_{i+1}-2) \times (n_i-2)$-matrix, etc.
If $L_i$ is  \textit{of type} $\textit{I}^e$, set $s_i=\mathrm{id}$ and $b_{i-1}+\sqrt{\widetilde{\gamma_i}}\cdot v_i+b_{i}^{\prime}=0$.
\end{itemize}
\end{enumerate}

 Before investigating the equations defining $\mathrm{Ker~}\varphi$, let us introduce the notation $\delta_i^{\prime}(b_i)$ in this paragraph.
 Recall that we say $\delta_i=\begin{pmatrix}  \delta_i^{\prime}&0\\0&\delta_i^{\prime\prime}  \end{pmatrix}$ at the beginning of the Appendix.
 Then the symmetric matrix $\delta_i^{\prime}$ defines a quadratic form which is 0 modulo 2. We define
 $$\delta_i^{\prime}(b_i):=\frac{1}{2}b_i\cdot\delta_i^{\prime}\cdot {}^t b_i$$
   as matrix multiplication, where $b_i$ is a $1 \times (n_{i}-1) $-low vector (resp. $1 \times (n_{i}-2)$-low vector)
   if $L_i$ is \textit{of type} $\textit{I}^o$ (resp. \textit{of type} $\textit{I}^e$).
 If $L_i$ is \textit{of type II}, we define $\delta_i^{\prime}(b_i)=\frac{1}{2}b_i\cdot\delta_i\cdot {}^tb_i$,
 where $b_i$ is a $(1 \times n_{i})$-low vector.\\

We are ready to state the equations defining $\mathrm{Ker~}\varphi$. They are obtained by the matrix equation ${}^tmqm=q$,
where $m$ is an element of $\mathrm{Ker~}\tilde{\varphi}(R)$ for a $\kappa$-algebra $R$.\\
By observing the diagonal $(i,i)$-blocks of  ${}^tmqm=q$, we have the following matrix equation:
\begin{equation}
{}^tm_{i,i}\delta_im_{i,i}+2({}^tm_{i-1, i}\delta_{i-1}m_{i-1, i}+{}^tm_{i+1, i}\delta_{i+1}m_{i+1, i})+
4({}^tm_{i-2, i}\delta_{i-2}m_{i-2, i}+
 {}^tm_{i+2, i}\delta_{i+2}m_{i+2, i})=(\delta_i),
\end{equation}
where $0\leq i < N.$\\
By observing the $(i,i+1)$-blocks of ${}^tmqm=q$, we have the following matrix equation:
\begin{equation}
{}^tm_{i,i}\delta_im_{i,i+1}+{}^tm_{i+1,i}\delta_{i+1}m_{i+1,i+1}+2({}^tm_{i-1,i}\delta_{i-1}m_{i-1,i+1}+{}^tm_{i+2,i}\delta_{i+2}m_{i+2,i+1})=0,
\end{equation}
where $0\leq i < N-1.$\\
By observing the  $(i,j)$-blocks of ${}^tmqm=q$, where $i+2 \leq j$, we have the following matrix equation:
\begin{equation}
\sum_{i\leq k \leq j} {}^tm_{k,i}\delta_km_{k,j}=0,
\end{equation}
where $0\leq i, j < N.$\\

 We first state the equations $\mathcal{F}_i$ and $\mathcal{E}_i$. These equations determine the connected components of $\mathrm{Ker~}\varphi$.
 Assume that $L_i$ is \textit{of type I}. By computing the $(2 \times 2)$-block
 (if $L_i$ is \textit{of type} $\textit{I}^o$) or the $(3 \times 3)$-block (if $L_i$ is \textit{of type} $\textit{I}^e$) of Equation $(7.1)$,  we have the equation $\mathcal{F}_i$:
 \begin{equation}
 \mathcal{F}_i : z_i+z_i^2+\delta_{i-1}^{\prime}(b_{i-1}^{\prime})+\delta_{i+1}^{\prime}(b_i)+
 \widetilde{\gamma_{i-1}}\cdot e_{i-1}^2+\widetilde{\gamma_{i+1}}\cdot d_i^2+x_{i-2}^2+x_{i}^2=0.
 \end{equation}

 Here,
 \begin{itemize}
 \item $b_{i-1}^{\prime}$ and $b_i$ are the blocks in $m_{i, i-1}$ and $m_{i, i+1}$ respectively, as defined above.
 \item $e_{i-1}$ is the $(3\times 2)$-block (if $L_{i}$ is \textit{of type} $\textit{I}^o$) or the $(3 \times 3)$-block (if $L_{i}$ is \textit{of type}
     $\textit{I}^e$) of $m_{i-1, i}$.
 \item $d_i$ is the $(2\times 2)$-block of $m_{i, i+1}$.\\ 
\item $x_{i-2}$  is the $(2\times 2)$-block (resp. the $(2\times 3)$-block) of $m_{i-2, i}$  when $L_{i}$ is \textit{of type} $\textit{I}^o$ (resp. \textit{of type} $\textit{I}^e$)
 and $L_{i-2}$  is \textit{of type I}.
\item $x_{i}$   is the $(2\times 2)$-block or the $(2\times 3)$-block of $m_{i, i+2}$
when $L_{i+2}$ is \textit{of type} $\textit{I}^o$ or \textit{of type} $\textit{I}^e$, respectively.\\
  If $L_{i-2}$ (resp. $L_{i+2}$) is not \textit{of type I}, we remove $x_{i-2}^2$ (resp. $x_i^2$) in the equation $\mathcal{F}_i$.
\end{itemize}
 If $L_i$ is \textit{of type} $\textit{I}^e$, the $(2 \times 2)$-block of  Equation $(7.1)$ induces the equation $\mathcal{E}_i$:
 \begin{equation}
 \mathcal{E}_i : u_i+\widetilde{\gamma_{i}}\cdot u_i^2+\delta_i^{\prime}(v_i)+d_{i-1}^2+e_{i}^2=0.
 \end{equation}
Here $d_{i-1}$ (resp. $e_{i}$)  appears only when $L_{i-1}$ (resp. $L_{i+1}$) is \textit{of type I}.\\

We now choose a non-negative integer $j$ such that $L_j$ is \textit{of type I} and $L_{j+2}$ is \textit{of type II}.
For such a $j$, there is a non-negative integer $m_j$ such that $L_{j-2l}$ is \textit{of type I} for every $l$ with $0\leq l\leq m_j$ and
$L_{j-2(m_j+1)}$ is \textit{of type II}.
Then the sum of equations $$\sum_{l=0}^{m_j} \mathcal{F}_{j-2l}+\sum_{l}^{\prime}\widetilde{\gamma_{j+1-2l}}\cdot\mathcal{E}_{j+1-2l}$$ becomes
\begin{equation}
\sum_{l=0}^m (z_{j-2l}+z_{j-2l}^2)+\sum_{l}^{\prime}(\widetilde{\gamma_{j+1-2l}}\cdot u_{j+1-2l}+\widetilde{\gamma_{j+1-2l}}^2\cdot u_{j+1-2l}^2)=0.
\end{equation}
Here $\sum_{l}^{\prime}\widetilde{\gamma_{j+1-2l}}\cdot\mathcal{E}_{j+1-2l}$ is the sum of the equations
$\widetilde{\gamma_{j+1-2l}}\cdot\mathcal{E}_{j+1-2l}$'s such that $0 \leq l\leq m_j+1$ and $L_{j+1-2l}$ is \textit{of type} $\textit{I}^e$.
On the other hand, if $L_i$ is \textit{free} \textit{of type} $\textit{I}^e_1$ so that $v_i$ is $0$, the equation $\widetilde{\gamma_{i}}\cdot\mathcal{E}_i$ becomes
\begin{equation}
\widetilde{\gamma_{i}}\cdot u_i+\widetilde{\gamma_{i}}^2\cdot u_i^2=0.
\end{equation}

We next state remaining equations defining $\mathrm{Ker~}\varphi$.
In addition to the equations $\mathcal{F}_i$ and $\mathcal{E}_i$, Equation $(7.1)$ induces the following:
\begin{itemize}
\item  If $L_i$ is \textit{of type} $\textit{I}^o$, the $(1\times 2)$-block of   Equation $(7.1)$
 is
\begin{equation}
 \delta_i^{\prime}y_i+{}^tv_i+\mathcal{P}^i_{1, 2}=0.
\end{equation}
 Here, $\mathcal{P}^i_{1, 2}$ is a polynomial with variables $m_{i-1, i}, m_{i+1, i}$.
\item  If $L_i$ is \textit{of type} $\textit{I}^e$, the $(1\times 2)$-block of   Equation $(7.1)$
 is
\begin{equation}
\delta_i^{\prime}r_i+{}^tv_i=0.
\end{equation}
\item If $L_i$ is \textit{of type} $\textit{I}^e$, the $(1\times 3)$-block of  Equation $(7.1)$
is
\begin{equation}
  \delta_i^{\prime}t_i+{}^ty_i+{}^tv_iz_i+\widetilde{\gamma_i}\cdot{}^tv_i+\mathcal{P}^i_{1, 3}=0.
\end{equation}
 Here, $\mathcal{P}^i_{1, 3}$ is a polynomial with variables $m_{i-1, i}, m_{i+1, i}$.
\item If $L_i$ is \textit{of type} $\textit{I}^e$, the $(2\times 3)$-block of  Equation $(7.1)$ is
\begin{equation}
  x_i+w_i+{}^tr_i\delta_i^{\prime}t_i+z_i+u_iz_i+\widetilde{\gamma_i}\cdot u_i +\mathcal{P}^i_{2, 3}=0. 
\end{equation}
 Here, $\mathcal{P}^i_{2, 3}$ is a polynomial with variables $m_{i-1, i}, m_{i+1, i}$.\\
 \end{itemize}

Equation $(7.2)$ induces the following:
\begin{itemize}
\item If either $L_i$ or $L_{i+1}$ is \textit{of type II},  Equation $(7.2)$ becomes
\begin{equation}
{}^tm_{i,i}\delta_im_{i,i+1}+{}^tm_{i+1,i}\delta_{i+1}m_{i+1,i+1}=0.
\end{equation}
\item If both $L_i$ and $L_{i+1}$ are \textit{of type I},  Equation $(7.2)$ consists of two parts:
\begin{equation}
 \left\{
  \begin{array}{l l}
  {}^tm_{i,i}\delta_im_{i,i+1}+{}^tm_{i+1,i}\delta_{i+1}m_{i+1,i+1}=0   & \quad  \text{except for the $(n_i\times n_{i+1})$-entry};\\
  f_i+f_i^{\prime}+ \mathcal{P}_i^{\prime}=0 &   \quad  \text{for the $(n_i\times n_{i+1})$-entry}.
    \end{array} \right\}
\end{equation}
Here, $\mathcal{P}_i^{\prime}$ is a polynomial with variables
$m_{i,i}, m_{i,i+1}, m_{i+1,i}, m_{i+1,i+1}, m_{i-1,i}, m_{i-1,i+1},$ $m_{i+2,i}$ and $ m_{i+2,i+1}$.
Notice that the polynomial  $\mathcal{P}_i^{\prime}$ does not include the variables $f_i, f_i^{\prime}$.
We recall that $f_i$ (resp. $f_i^{\prime}$) is such that $2f_i$ (resp. $2f_i^{\prime}$) is the entry of $m_{i,i+1}$ (resp. $m_{i+1,i}$).\\
\end{itemize}

Finally, we observe Equation  (7.4), (7.5), (7.6), (7.7), (7.8), (7.9), (7.10), (7.11), (7.12), (7.13)
 and  (7.3).
The closed subscheme $\mathrm{Ker~}\varphi$ of $\mathrm{Ker~}\tilde{\varphi}$ is determined by these equations.
It is easily seen that  $\mathrm{Ker~}\varphi$ is a disconnected affine space with $2^{\alpha+\beta}$ components.
Moreover, the dimension of $\mathrm{Ker~}\varphi$ can be computed by observing these equations or by the following lemma, and it is $l$.
This completes the proof.
\end{proof}

The dimension of  $\mathrm{Ker~}\varphi$ is also computed from the following lemma easily.
\begin{Lem}
The dimension of $\mathrm{Ker~}\varphi$ is $l$.
\end{Lem}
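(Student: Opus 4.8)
The plan is to bypass a detailed count of independent equations and read off $\dim\mathrm{Ker~}\varphi$ from the orbit--dimension identity for a homomorphism of algebraic groups over $\kappa$, namely $\dim\tilde{G}=\dim\mathrm{Ker~}\varphi+\dim\mathrm{Im~}\varphi$. The source dimension is already available: since $\underline{G}$ is the stabilizer of $q$ in $\underline{M}^{\ast}$ and the morphism $\rho$ is smooth of relative dimension $\dim\mathrm{O}(V,q)$ (as established in Section 3), the smooth group scheme $\underline{G}$ has relative dimension $\dim\mathrm{O}(V,q)=n(n-1)/2$, whence $\dim\tilde{G}=n(n-1)/2$. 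Because $l$ is defined by $l+\sum_i\dim\mathrm{O}(\bar{V_i},\bar{q_i})^{\mathrm{red}}=\dim\tilde{G}$, the assertion $\dim\mathrm{Ker~}\varphi=l$ is equivalent to the single equality $\dim\mathrm{Im~}\varphi=\sum_i\dim\mathrm{O}(\bar{V_i},\bar{q_i})^{\mathrm{red}}$.

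One inequality is immediate: $\mathrm{Im~}\varphi$ is a closed subgroup of $\prod_i\mathrm{O}(\bar{V_i},\bar{q_i})^{\mathrm{red}}$, so $\dim\mathrm{Im~}\varphi\le\sum_i\dim\mathrm{O}(\bar{V_i},\bar{q_i})^{\mathrm{red}}$ and hence $\dim\mathrm{Ker~}\varphi\ge l$. For the reverse inequality I would exhibit enough of the image directly. For each $i$ the closed subgroup scheme $\tilde{H}_i\subseteq\tilde{G}$ of Section 4.2, isomorphic to the special fibre of the single-constituent group scheme attached to $L_i$, has the property that $\varphi_i(\tilde{H}_i)$ contains the identity component of $\mathrm{O}(\bar{V_i},\bar{q_i})^{\mathrm{red}}$; for the free disconnected constituents this is the isomorphism $H_i\cong\mathrm{O}(\bar{V_i},\bar{q_i})^{\mathrm{red}}$ already used in the proof of Theorem 4.1, and for the remaining constituents it is the one-block statement that the reductive quotient of $\tilde{H}_i$ is $\mathrm{O}(\bar{V_i},\bar{q_i})^{\mathrm{red}}$. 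Since distinct $\tilde{H}_i$ are supported on disjoint diagonal blocks, they commute and intersect trivially, so $\prod_i\tilde{H}_i$ embeds in $\tilde{G}$ as a closed subgroup scheme and $\prod_i\varphi_i$ maps it dominantly onto $\prod_i\mathrm{O}(\bar{V_i},\bar{q_i})^{\mathrm{red}}$. This forces $\dim\mathrm{Im~}\varphi\ge\sum_i\dim\mathrm{O}(\bar{V_i},\bar{q_i})^{\mathrm{red}}$, and combining the two inequalities gives $\dim\mathrm{Ker~}\varphi=l$.

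The step needing the most care is the per-constituent surjectivity $\varphi_i(\tilde{H}_i)\supseteq(\mathrm{O}(\bar{V_i},\bar{q_i})^{\mathrm{red}})^{\circ}$, and in particular keeping it logically independent of Theorem 4.1, since that theorem is itself deduced from the very dimension count at issue here. This is a single-block verification across the type cases of Remark 4.3: one produces explicit elements of $\tilde{H}_i$ reducing to prescribed orthogonal transformations of $\bar{V_i}=B_i/Z_i$ while still satisfying ${}^tmqm=q$, exactly in the spirit of the computations already made for $H_i$ and for the subgroups $E_i,F_i$ earlier in Section 4.2. Should one prefer to remain entirely within the equation framework, the same bound $\dim\mathrm{Ker~}\varphi\le l$ can be extracted from the displayed relations: viewing $\mathrm{Ker~}\varphi$ inside $\mathrm{Ker~}\tilde\varphi\subseteq\tilde{M}$, the diagonal-block relations coming from Equation $(7.1)$ cut the free-parameter count of $\mathrm{Ker~}\tilde\varphi$ down to $l$, which is precisely the alternative already signalled as ``observing these equations'' in the proof of Lemma 4.2.
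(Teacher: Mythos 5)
Your reduction of the lemma to the single equality $\dim\mathrm{Im~}\varphi=\sum_i\dim\mathrm{O}(\bar{V_i},\bar{q_i})^{\mathrm{red}}$ (via $\dim\tilde{G}=\dim\mathrm{Ker~}\varphi+\dim\mathrm{Im~}\varphi$ and the definition of $l$) is sound, and you are right to worry about circularity with Theorem 4.1. But the step you yourself flag as delicate is in fact false for \emph{bound} constituents, and that is where the argument breaks. The subgroup $\tilde{H}_i$ is isomorphic to the special fibre of the group scheme attached to $L_i$ \emph{as a standalone lattice}, so its reductive quotient is the orthogonal group of $B(L_i)/Z(L_i)$ computed inside $L_i$ alone; by contrast $\bar{V_i}=B_i/Z_i$ is computed inside all of $L$ and is strictly larger when $L_i$ is \textit{bound}. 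Remark 4.3 records exactly this mismatch: for $L_i$ \textit{bound} \textit{of type} $\textit{II}$ with $n_i=2m$ one has $\mathrm{O}(\bar{V_i},\bar{q_i})^{\mathrm{red}}=\mathrm{SO}(n_i+1,\bar{q_i})$, of dimension $m(2m+1)$, while $\dim\tilde{H}_i=n_i(n_i-1)/2=m(2m-1)$; hence $\varphi_i(\tilde{H}_i)$ cannot contain the identity component of the target. The missing dimensions of $\mathrm{Im~}\varphi_i$ are carried by the off-diagonal blocks $m_{i,i\pm1}$, i.e.\ by elements of $\tilde{G}$ lying outside $\prod_j\tilde{H}_j$, so $\prod_i\varphi_i$ restricted to $\prod_i\tilde{H}_i$ is not dominant and the reverse inequality $\dim\mathrm{Im~}\varphi\geq\sum_i\dim\mathrm{O}(\bar{V_i},\bar{q_i})^{\mathrm{red}}$ is not established.

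This is precisely why the paper does not argue through the image. Its proof takes the inequality $\dim\mathrm{Ker~}\varphi\geq l$ for granted (your first paragraph) and then bounds the dimension from above by computing the tangent space at the identity: $T_e(\mathrm{Ker~}\varphi)=\mathrm{Ker~}\rho_{\ast,e}\cap T_0(\bar{\kappa})$ inside $T_1(\bar{\kappa})$, transported by the bijection $X\mapsto q\cdot X$ of Lemma 3.6 into $T_3(\bar{\kappa})$, where $q\cdot\mathrm{Ker~}\rho_{\ast,e}$ is the space of symmetric matrices with zero diagonal and the intersection with $q\cdot T_0(\bar{\kappa})$ is counted to be $l$-dimensional. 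Your closing fallback (``the diagonal-block relations cut the parameter count down to $l$'') is an appeal to exactly this computation without performing it, so it cannot be credited as an independent route. To salvage your approach you would have to exhibit, for each \textit{bound} $L_i$, elements of $\tilde{G}$ with nontrivial off-diagonal blocks realizing the extra part of $\mathrm{O}(\bar{V_i},\bar{q_i})^{\mathrm{red}}$ --- which amounts to redoing the Appendix's analysis of the equations ${}^tmqm=q$.
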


\begin{proof}
Since the dimension of $\mathrm{Ker~}\varphi$ is at least $l$,
 it suffices to show that the dimension of the tangent space of $\mathrm{Ker~}\varphi$ at the identity $e$ is $l$.
It is enough to check the statement over the algebraic closure $\bar{\kappa}$ of $\kappa$.
Recall the morphism $$\tilde{\varphi} : \tilde{M} \rightarrow \prod_i\mathrm{Aut}_{\kappa}(\bar{V_i}).$$
Then we have $$\mathrm{Ker~}\varphi = \tilde{G} \cap \mathrm{Ker~}\tilde{\varphi}  $$ as closed subgroup schemes of $\tilde{M}$.
Recall that we have defined the map $\rho_{\ast, m}:T_m \rightarrow T_{\rho(m)}$ and
 we identified $T_m$ and $T_{\rho(m)}$ with $T_1(\bar{\kappa})$ and $T_2(\bar{\kappa})$ respectively, in the proof of Lemma 3.6.
Based on these, the tangent space of $\tilde{G}$ at $e$ is the kernel of the map $\rho_{\ast, e}$.
In addition, the tangent space of $\mathrm{Ker~}\tilde{\varphi}$ at $e$ is identified with the subspace of $T_1(\bar{\kappa})$,
satisfying with 9 cases described at the beginning of the Appendix
 if we change  $s_i=\mathrm{id}$ and $m_{i,i}=\mathrm{id}$ to  $s_i=0$ and $m_{i,i}=0$, respectively.
 We denote this subspace by $T_0(\bar{\kappa})$.

The tangent space of $\mathrm{Ker~}\varphi$ at $e$ is the intersection of $\mathrm{Ker~}\rho_{\ast, e}$ and  $T_0(\bar{\kappa})$ as subspaces of $T_1(\bar{\kappa})$.
 Thus it suffices to show that $\mathrm{Ker~}\rho_{\ast, e}\cap T_0(\bar{\kappa})$  has dimension $l$.

Since $X \mapsto q\cdot X $ is a bijection $T_1(\bar{\kappa}) \rightarrow T_3(\bar{\kappa})$ as explained in the proof of Lemma 3.6, it suffices to show that
$q\cdot(\mathrm{Ker~}\rho_{\ast, e} \cap T_0(\bar{\kappa}))$ as a subspace of $T_3(\bar{\kappa})$ has dimension $l$.
We have $$q\cdot(\mathrm{Ker~}\rho_{\ast, e}\cap T_0(\bar{\kappa}))=q\cdot \mathrm{Ker~}\rho_{\ast, e}\cap q\cdot T_0(\bar{\kappa}).$$
The space $q\cdot\mathrm{Ker~}\rho_{\ast, e}$ is the kernel of the map $T_3(\bar{\kappa}) \rightarrow T_2(\bar{\kappa}), Y \mapsto {}^t Y + Y$.
Thus it is the subspace of $T_3(\bar{\kappa})$ consisting of symmetric matrices $Y$'s whose diagonal entries
are 0.
Notice that this fact implies that the dimension of $q\cdot\mathrm{Ker~}\rho_{\ast, e}$ is $n(n-1)/2$, which is the dimension of $\tilde{G}$.
By considering $q\cdot T_0(\bar{\kappa})$, it is easily seen that the dimension of $q\cdot \mathrm{Ker~}\rho_{\ast, e}\cap q\cdot T_0(\bar{\kappa})$ is
exactly $l$.
\end{proof}


\end{document}